\newcommand{\bc}{\begin{center}}
\newcommand{\ec}{\end{center}}
\newcommand{\be}{\begin{enumerate}}
\newcommand{\ee}{\end{enumerate}}
\newcommand{\beq}{\begin{equation}}
\newcommand{\eeq}{\end{equation}}
\newcommand{\bi}{\begin{itemize}}
\newcommand{\ei}{\end{itemize}}
\newcommand{\bd}{\begin{description}}
\newcommand{\ed}{\end{description}}
\newcommand{\ba}{\begin{array}}
\newcommand{\bea}{\begin{eqnarray*}}
\newcommand{\eea}{\end{eqnarray*}}
\newcommand{\ea}{\end{array}}
\newcommand{\bt}{\begin{tabular}}
\newcommand{\et}{\end{tabular}}
\newcommand{\bmi}{\begin{minipage}}
\newcommand{\emi}{\end{minipage}}
\newcommand{\lb}{\linebreak}
\newcommand{\K}{\cal K}
\newtheorem{stel}{Theorem}[section]
\newtheorem{defin}[stel]{Definition}
\newtheorem{lemm}[stel]{Lemma}
\newtheorem{exam}[stel]{Example}
\newtheorem{rem}[stel]{Remark}
\newtheorem{corollary}[stel]{Corollary}
\newtheorem{theo}[stel]{Theorem}
\newcommand{\myitem}[1]{%
\item[#1]\protected@edef\@currentlabel{#1}%
}
\newcommand{\vbf}[1]{\textbf{#1}}
\newcommand{\Matrix}[1]
    {\begin{pmatrix}
      \Matrix@r #1;\@bye;\Matrix@r
     \end{pmatrix}}
\def\Matrix@r #1;{\@bye #1\Matrix@z\@bye\Matrix@s #1,\@bye, }%
\def\Matrix@s #1,{#1\Matrix@t }%
\def\Matrix@t #1,{\@bye #1\Matrix@y\@bye\@firstofone {&#1}\Matrix@t}%
\def\Matrix@y #1\Matrix@t{\\ \Matrix@r }%
\def\Matrix@z #1\Matrix@r {}
\def\@bye  #1\@bye   {}% (the idea of \@bye is from xint code)
\newsavebox\myboxA
\newsavebox\myboxB
\newlength\mylenA
\newcommand*\xoverline[2][0.75]{%
    \sbox{\myboxA}{$\m@th#2$}%
    \setbox\myboxB\null% Phantom box
    \ht\myboxB=\ht\myboxA%
    \dp\myboxB=\dp\myboxA%
    \wd\myboxB=#1\wd\myboxA% Scale phantom
    \sbox\myboxB{$\m@th\overline{\copy\myboxB}$}%  Overlined phantom
    \setlength\mylenA{\the\wd\myboxA}%   calc width diff
    \addtolength\mylenA{-\the\wd\myboxB}%
    \ifdim\wd\myboxB<\wd\myboxA%
       \rlap{\hskip 0.5\mylenA\usebox\myboxB}{\usebox\myboxA}%
    \else
        \hskip -0.5\mylenA\rlap{\usebox\myboxA}{\hskip 0.5\mylenA\usebox\myboxB}%
    \fi}
\begin{document}

\bc {\bf\large Near-field structures on a given scalar group}\\[3mm]
{\sc Sophie Marques and Leandro Boonzaaier} 

\it\small
Department of Mathematical Sciences, 
University of Stellenbosch, 
Stellenbosch, 7600,\lb
South Africa\\
\&
NITheCS (National Institute for Theoretical and Computational Sciences), 
South Africa \\
\rm e-mail: smarques@sun.ac.za

\it
P.O. Box 1, 
Hermannsburg
KwaZulu-Natal, 3258,\lb
South Africa\\
\rm e-mail address: leandro.boonzaaier90@gmail.com
\ec

\begin{abstract} 
With this paper, we gain a better understanding of the set of near-field structures on a fixed scalar group. If we were able to describe all near-field structures on a fixed scalar group, we could describe all near-vector spaces. The near-field structures induced by isomorphisms of canonical near-vector spaces differ by quasi-multiplicative bijections while those induced by isomorphisms of near-fields differ by multiplicative bijections. This reveals one of the fundamental differences between linear algebra and near-linear algebra. We find an explicit description of all the elementary near-vector spaces. Significantly, we construct an addition $\boxplus$ on $\mathbb{Q}$ such that $(\mathbb{Q},\boxplus, \cdot)$ is isomorphic to $(\mathbb{Q}(\sqrt{-19}),+, \cdot)$. We also describe explicitly sufficient conditions for such an isomorphism to exist for more general extensions of $\mathbb{Q}$. Moreover, under extra conditions, we still describe those structures on $(\mathbb{R}, \cdot)$, and $(\mathbb{C}, \cdot)$.   \end{abstract} 

{\bf Key words:} Near-vector spaces, Near-rings, Near-fields

{\it 2020 Mathematics Subject Classification:} 16Y30; 12K05

\tableofcontents

\section{Introduction}

A near-vector space is defined over a scalar group $(F, \cdot)$. That is, a monoid with elements $0, -1, 1$ carrying some type of additive structure (see Definition \ref{scalargroup} and Definition \ref{Andredefinition}). It is known that any non-zero element of a near-vector space induces an addition, $+$, such that $(F, +, \cdot)$ becomes a near-field. Moreover, any near-vector space can be decomposed into regular components (see \cite[Theorem 4.13]{Andre} or \cite[Theorem 2.4-17]{DeBruyn}) and each non-trivial regular component is isomorphic to the near-vector space $((F, +, \cdot),(F, \cdot))^I$, as described in \cite[Theorem 4.2]{Andre} or \cite[Theorem 2.4-3]{DeBruyn}, for some non-empty index set $I$ and some addition $+$ such that $(F, +, \cdot)$ is a near-field  (see \cite[Theorem 5.2]{Andre} or \cite[Theorem 2.5-2]{DeBruyn}). Therefore, understanding all the additive structures that turn a scalar group into a near-field is the key to describing all the near-vector spaces over this given scalar group. In this paper, we advance our understanding of these additive structures. We find that those structures are interesting independently of the vector spaces they generate. 

Section \ref{prelim_mat} presents some useful preliminary material that is used throughout the paper. In particular, given a scalar monoid $(F, \cdot)$, we define different types of maps of $F$ such as multiplicative, quasi-multiplicative, scalar, and $\mathbb{Z}$-maps (see Definition \ref{endomap}). We study some properties of these maps and describe multiplicative endobijections for $\mathbb{Z}$ and $\mathbb{Q}$. These maps become important examples of maps inducing near-ring addition on a scalar monoid as we will see in the rest of the paper.

In the next section, we recall how bijections naturally induce near-ring structures. This permits us to set some important notation for the paper. We also describe the different near-ring additions on a given scalar monoid inducing isomorphic near-rings (see Lemma \ref{ringisom}) and distinct multiplications on a given additive structure also inducing isomorphic near-ring structures (see Lemma \ref{multstruc}). 

Section \ref{elem_nvs} starts with a discussion whose goal is to determine the most general structure such that an elementary $R$-module, $R$, could exist. This leads to the discussion aiming to define an elementary near-vector space as the smallest building block of a near-vector space (see Definition \ref{elemen} and Theorem \ref{elemtheo}). In doing so, we obtain an important class of near-field additions on a scalar group i.e. those induced by quasi-multiplicative bijections. 

In the last section we characterize the near-field additions on a scalar group as near-field addition maps, which are endobijections of the given scalar group satisfying some specific properties (see Definition \ref{nfam}). We then define the characteristic map permitting us to see, as usual, any near-field as a vector space over $\mathbb{Q}$ or $\mathbb{F}_p$ (see Lemma \ref{rhocha} and Lemma \ref{uniqueup}). We can fully describe all the field additions on finite fields. A significant result at the end of this last section describes sufficient conditions for an addition, $\boxplus$ to induce a structure of field $(\mathbb{Q},\boxplus, \cdot)$ isomorphic to $(\mathbb{Q}, + , \cdot)$. This addition reveals number theoretic properties of the fields isomorphic to those $(\mathbb{Q},\boxplus, \cdot)$.
Inducing extra conditions on $\mathbb{R}$ and $\mathbb{C}$, we also obtain a full description of the field addition on $(\mathbb{R},\cdot) $ and $(\mathbb{C}, \cdot)$.

\section{Preliminary material}\label{prelim_mat}
The concept of a near-vector space is a generalization of a vector space. A few authors, including Beidleman \cite{Beidleman}, Karzel \cite{Karzel} and Andr\'{e} \cite{Andre}, have defined near-vector spaces. This paper will focus on Andr\'{e}'s near-vector spaces.
We begin by giving some of the definitions we will use in this paper. Throughout the paper, given $S$ a set, we write $S^{*}$ when we refer to the non-zero elements of $S$. When $S$ is endowed with a monoid structure, we write $S^{\times}$ to refer to the invertible elements of $S$ with respect to this structure. We denote $[\![1,n]\!]$ to be the set of the integer from $1$ to $n$ where $n \in \mathbb{N}$. An endomap of $F$ is a map from $F$ to $F$ and endobijection of $F$ is a bijection from $F$ to $F$. 
We denote \\ %for any $n \in \mathbb{N}$ \\
$\bullet$ $\mathbb{P}_{\mathbb{N}}=\{  p | p \text{ is prime natural number} \}$, \\
$\bullet$ $\mathbb{P}_{\mathbb{Z}}=\{  \eta p | p \text{ is prime natural number and } \eta \in \{ \pm 1 \} \}$, \\
$\bullet$ $\mathbb{P}_{\mathbb{Q}^+}=\{   p^{\nu} | p \text{ is prime natural number and } \nu \in \{ \pm 1 \} \}$, \\
$\bullet$  $\mathscr{P}_{\mathbb{Z}}=\{ \{ \eta_p p | p \text{ is prime natural number} \} | \eta_p \in \{ \pm 1 \} , \ p \in \mathbb{P}_{\mathbb{N}} \}$, \\
$\bullet$ $\mathscr{P}_{\mathbb{Q}}=\{ \{ \eta_p p^{\nu_p}| p \text{ is prime natural number } \} | \eta_p, \nu_p \in \{ \pm 1\}, \  p \in \mathbb{P}_{\mathbb{N}} \}$, \\

The following definition is very convenient when working with near-vector spaces. 
\begin{defin} ( \cite[Definition 1.1]{MMJ}) \label{scalargroup}
A {\sf scalar monoid} $F$ is a tuple $F=(F,\cdot,1,0,-1)$ where $(F,\cdot,1)$ is a monoid, $0, -1\in F$, $0\neq1$,  
$0\cdot \alpha=0=\alpha\cdot 0$ for all $\alpha\in F$ and $\{ \pm 1\}$ is the solution set of the equation $x^2=1$. For all $\alpha \in F$, we denote $-\alpha$ as the element $(-1)\cdot \alpha$. When $(F^{*},\cdot,1)$ is a group, we refer to $(F,\cdot,1,0,-1)$ as a {\sf scalar group}. In the rest of the paper, unless stated otherwise, we shall denote the scalar group as $(F,\cdot)$.
\end{defin}
\begin{rem} Non-trivial unitary near-rings are examples of scalar monoids and near-fields are examples of scalar groups.  
\end{rem}
With the concept of a scalar monoid, we can define the notion of an action of a scalar monoid as follows. 
\begin{defin} (\cite[Definition 1.2]{MMJ})  \label{actiondef}
Let $V=(V,+)$ be an abelian group, $(F, \cdot , 1, 0, -1)$ be a scalar monoid and $\mu : F \times V \rightarrow V $ sending $(\alpha , v)$ to $\alpha \boxdot v$ be a map. 
\begin{enumerate}
\item We refer to the map $\mu$ as an {\sf action of $F$ on $V$}. Throughout the paper, when we call an action $\mu$, we will denote the image of an element $(\alpha , v)$ of $F \times V$, via $\mu$, as $\alpha \boxdot v$.
\item We say that an action $\mu$ is a {\sf left monoid action} if for all $\alpha , \beta \in F$, $v\in V$, $\alpha \boxdot (\beta \boxdot v) = (\alpha \cdot \beta) \boxdot v$ and $1  \boxdot v = v$.
\item We say that a monoid action $\mu$ is {\sf free} if the monoid action is free. That is, for any $\alpha, \beta \in F$ and $v \in V$, $\alpha \boxdot v = \beta \boxdot v$, then $v=0 \ \textrm{or} \ \alpha = \beta $. %(equivalently, $\alpha\cdot v= v\implies v=0$ whenever $\alpha\neq 1$).
 \item We say that an action $\mu$ is {\sf compatible with the $\mathbb{Z}$-structure of $V$} if 
 \begin{itemize} 
 \item $\mu$ {\sf acts by endomorphisms}. That is, for all $\alpha \in F$, $v, w \in V$, 
$\alpha \boxdot (v + w) = \alpha \boxdot v + \alpha \boxdot w$, 
\item {\sf $-1$ acts as $-id$}. That is, for all $v \in V$, $-1 \boxdot v = -v$ and 
\item {\sf $0$ acts trivially}. That is, for all $v \in V$, $0 \boxdot v = 0$.
\end{itemize}
\item We say that an action $\mu$ is a {\sf left scalar monoid action} if it is a monoid action compatible with the $\mathbb{Z}$-structure of $V$.
\end{enumerate} 
\end{defin}

We also recall the definition of a left near-vector space. 
\begin{defin}{\normalfont (\cite[Definition 4.1]{Andre}, \cite[Definition 1.3]{MMJ})} \label{Andredefinition}
A {\sf left near-vector space} is a pair $( (V, \boxplus, \boxdot), (F,\cdot))$ (when there is no confusion we will simply denote it as $(V, F)$ or $V$) where $(F,\cdot)$ is a scalar group, $(V, \boxplus)$ is an abelian additive group and $\boxdot : F \times V \rightarrow V $ is a left scalar monoid free action such that its quasi-kernel, $Q(V) = \{v \in V \mid \forall{\alpha, \beta \in F} \exists{\gamma \in F} [ \alpha \cdot v + \beta \cdot v = \gamma \cdot v ]\}$, generates $V$ seen as an additive group. We will simply say near-vector space to refer to a left near-vector space in the following. We also refer to a near-vector space over $F$ as a $F$-near-vector space. Any trivial abelian group has a near-vector space structure through the trivial action over any scalar group. We refer to such a space as {\sf a trivial near-vector space over $F$}. We denote a trivial near-vector space as $\{0\}$.
\end{defin}

The concept of regularity is a central notion in the study of near-vector spaces. 
\begin{defin}\label{def4}{\normalfont (\cite[Definition 4.7]{Andre} )} Let $(V ,F )$ be a near-vector space.
We say that two vectors, $\vbf{u}$ and $\vbf{v}$, of $Q(V)$ are {\sf compatible} if there exists $\lambda \in F^{*}$ such that $\vbf{u} \boxplus \lambda \boxdot  \vbf{v} \in Q(V)$. 
A near-vector space $(V ,F )$ is {\sf regular} if any two vectors of  $Q(V)^{*}$ are {\it compatible.} 
\end{defin}
Regular near-vector spaces behave most like traditional vector spaces. It is straightforward to see that if $Q(V) = V,$ then $V$ is regular. Andr\'{e} proved that any near-vector space can be decomposed into regular parts \cite[Theorem 4.13]{Andre} or \cite[Theorem 2.4-17]{DeBruyn}. This serves as motivation for him to refer to regular near-vector spaces as the building blocks of his theory. Moreover, each non-zero element induces an addition on the underlying scalar group endowing it with a near-field structure as stated in the next definition. 
\begin{defin}(Lemma) \label{+u} \cite[section 2]{Andre}
Let $(V , F )$ be a near-vector space, $\vbf{u} \in Q(V)^*$  and $\alpha, \beta \in F$. We denote $\alpha +_{\vbf{u}} \beta$ the unique element $\gamma$ of $F$ such that $\alpha \boxdot \vbf{u} \, \boxplus \, \beta \boxdot \vbf{u} = \gamma \boxdot \vbf{u}$.  $+_\vbf{u}$ defines a group operation on $F$. Moreover $(F ,+_\vbf{u} , \cdot)$ is a near-field.
\end{defin}

In other words, a near-vector space structure involves the existence of possibly distinct additive structures on the scalar group, each of them endowing the latter with a near-field. A scalar group $(F, \cdot)$ with no such additive structure can only have a trivial near-vector space as $F$-near-vector space. The question of when and how many additive structures exist on a scalar group, defining a near-field structure on the scalar group, becomes central to constructing near-vector spaces. The difference between fields and near-fields in terms of those additions is related to the following remark. 

\begin{rem}
Let $(V , F )$ be a near-vector space, $\vbf{u} \in Q(V)^*$ and $\lambda \in F$. The addition $+_{\lambda \vbf{u}}$ is given by $\alpha  +_{\lambda \vbf{u}}\beta = (\alpha \cdot  \lambda +_\vbf{u} \beta \cdot \lambda ) \cdot \lambda^{-1}$, for any $\alpha, \ \beta \in F$. 
\end{rem}

To simplify the discussion throughout the paper, we make the following definition. 
\begin{defin} Let $F$ be a scalar monoid (resp. group). We say that a binary operation denoted, $+$, is a {\sf left near-ring addition (resp. left near-field addition) on $F$} if $(F, + , \cdot)$ is a left near-ring (resp. left near-field).
\end{defin}

We define morphisms between near-vector spaces as follows.
\begin{defin}{\normalfont(\cite[Definition 3.2]{HowMey} )}\label{homo}
Let $((V_1, \boxplus_1 , \boxdot_1), (F_1, \cdot_1))$ and $((V_2, \boxplus_2 , \boxdot_2), (F_2, \cdot_2))$ be two near-vector spaces.
We say that a pair $(\Psi,\varphi)$ is a {\sf homomorphism} of near-vector spaces if there is an additive homomorphism $\Psi : (V_1,\boxplus_1) \rightarrow (V_2,\boxplus_2)$ and a group isomorphism
 $\varphi : (F_1^*,\cdot_1) \rightarrow (F_2^*,\cdot_2)$
such that $\Psi (\alpha \boxdot_1  \vbf{u} ) = \varphi (\alpha)\boxdot_2 \Psi (\vbf{u} )$ for all $\vbf{u} \in V_1$ and $\alpha\in F_1^*$. Two near-vector spaces $(V_1,F_1)$ and $(V_2,F_2)$ are \textsf{homomorphic} if there exists a homomorphism of near-vector spaces  between them. When $\varphi= \operatorname{Id}$, we say that $\Psi$ is a linear map. $(\Psi,\varphi)$ is said to be an \textsf{isomorphism} of left near-vector spaces if $\Psi$ and $\varphi$ are also bijective maps. When, moreover, $\varphi= \operatorname{Id}$, we say that $\Psi$ is {\sf a linear isomorphism} of left near-vector spaces.
\end{defin}

Scalar, multiplicative and quasi-multiplicative bijections will permit us to induce isomorphic near-vector space structures on a near-field $F$ over a fixed scalar group. We define them and determine some of their properties below. 

 \begin{defin} \label{endomap}
 Let $(F_1,\cdot_1,1_1,0_1,-1_1) $ and $(F_2,\cdot_2,1_2,0_2,-1_2)$ be two scalar monoids.
 \begin{enumerate}
 \item We say that a map from $(F_1,\cdot_1,1_1,0_1,-1_1)$ to $(F_2,\cdot_2,1_2,0_2,-1_2)$  is {\sf a $\mathbb{Z}$-map} if it sends $0_1$ to $0_2$ and $\sigma (-\alpha)= - \sigma(\alpha)$ for all $\alpha \in F_1$. We say that it is  {\sf a $\mathbb{Z}$-bijection} if it is a $\mathbb{Z}$-map that is also bijective. 
\item We define a {\sf multiplicative map from $(F_1,\cdot_1,1_1,0_1,-1_1)$ to $(F_2,\cdot_2,1_2,0_2,-1_2)$} to be a monoid morphism $\varphi$  from $(F_1,\cdot_1,1_1,0_1,-1_1)$ to $(F_2,\cdot_2,1_2,0_2,-1_2)$. That is, $\varphi$ is a map such that $\varphi (1_1)= 1_2$ and $\varphi (\alpha \cdot_1 \beta ) = \varphi (\alpha) \cdot_2 \varphi (\beta)$, for all $\alpha , \ \beta \in F_1$. We say that $\varphi$ is a {\sf multiplicative bijection}, if $\varphi$ is a bijection that is also a multiplicative map.
\item We define a {\sf right scalar endomap (resp. left scalar endomap)} of $F_1$ to be a endomap of $F$ $\phi$ such that $\phi(\alpha) = \alpha \cdot_1 \lambda$ (resp. $\phi(\alpha) =  \lambda \cdot_1 \alpha$), for some $\lambda \in F^*_1$. We also say that $\phi$ is {\sf the right scalar endomap associated with $\lambda$} (resp. {\sf the left scalar endomap associated with $\lambda$}). We say that $\phi$ is a {\sf right scalar endobijection (resp. left scalar endobijection)} of $F_1$, if $\phi$ is a bijection that is also a right scalar endomap (resp. left scalar endomap) of $F_1$.
 \item We define a {\sf right quasi-multiplicative map (resp. left quasi-multiplicative map)} from $(F_1,\cdot_1,1_1,0_1,-1_1)$ to $(F_2,\cdot_2,1_2,0_2,-1_2)$ to be a map $\phi$ such that $\phi(\alpha) =\varphi ( \alpha )\cdot_2 \lambda $ for all $\alpha \in F_1$ where $\varphi$ is a multiplicative map and $\lambda \in F_2^*$. We also say that $\phi$ is {\sf the right quasi-multiplicative map from $(F_1,\cdot_1,1_1,0_1,-1_1)$ to $(F_2,\cdot_2,1_2,0_2,-1_2)$ associated with $\lambda$ and $\varphi$} (resp. {\sf the left quasi-multiplicative map from $(F_1,\cdot_1,1_1,0_1,-1_1)$ to $(F_2,\cdot_2,1_2,0_2,-1_2)$ associated with $\lambda$ and $\varphi$}). We say that $\phi$ is a {\sf right quasi-multiplicative bijection (resp. left quasi-multiplicative bijection)} if it is a bijection that is also a right quasi-multiplicative map (resp. left quasi-multiplicative map).
\end{enumerate} 
\end{defin}
 
 \begin{rem} Let $(F_1,\cdot_1,1_1,0_1,-1_1)$ and $(F_2,\cdot_2,1_2,0_2,-1_2)$ be scalar monoids. 
 When $\varphi : (F_1,\cdot_1,1_1,0_1,-1_1)\rightarrow (F_2,\cdot_2,1_2,0_2,-1_2)$ is a bijection and $\varphi (\alpha \cdot_1 \beta ) = \varphi ( \alpha) \cdot_2 \varphi ( \beta)$, then $\varphi (1_1) = 1_2$. Indeed, let $\alpha \in F_1$. $\varphi ( 1_1\cdot_1 \alpha ) = \varphi( 1_1) \cdot_2 \varphi (\alpha) = \varphi(\alpha)$. Since $\varphi$ is a bijection, we obtain $ \varphi( 1_1) \cdot_2 \beta = \beta$, for all $\beta \in F_1$. Since $1_2$ is the only element for $F_2$ having this property, we obtain $ \varphi( 1_1)= 1_2$.
 \end{rem} 
 
The next lemma proves that any multiplicative bijection is a $\mathbb{Z}$-map. 

\begin{lemm} \label{Zendo} Let $(F_1,\cdot_1,1_1,0_1,-1_1)$ to $(F_2,\cdot_2,1_2,0_2,-1_2)$ be scalar monoids. 
Any multiplicative bijection $\varphi$ from $(F_1,\cdot_1,1_1,0_1,-1_1)$ to $(F_2,\cdot_2,1_2,0_2,-1_2)$ is a $\mathbb{Z}$-map such that for any $\alpha \in F_1^*$, we have $\varphi (\alpha^{-1}) = \varphi (\alpha)^{-1}$. 
\end{lemm}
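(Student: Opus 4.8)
The plan is to verify the three asserted properties one at a time, in each case simply transporting a defining identity of the scalar monoid $F_1$ across $\varphi$ and then exploiting that $\varphi$ is a bijection. Begin with the image of the absorbing element: applying $\varphi$ to $0_1\cdot_1\alpha = 0_1$ gives $\varphi(0_1)\cdot_2\varphi(\alpha) = \varphi(0_1)$ for every $\alpha\in F_1$. Since $\varphi$ is surjective, $\varphi(\alpha)$ sweeps out all of $F_2$, so $\varphi(0_1)\cdot_2\beta = \varphi(0_1)$ for every $\beta\in F_2$; specializing to $\beta = 0_2$ and using the absorbing law $\varphi(0_1)\cdot_2 0_2 = 0_2$ in $F_2$ forces $\varphi(0_1) = 0_2$.

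The crux is to identify $\varphi(-1_1)$. Because $\{\pm 1_1\}$ is the solution set of $x^2 = 1_1$ we have $(-1_1)\cdot_1(-1_1) = 1_1$, and applying $\varphi$ together with $\varphi(1_1) = 1_2$ yields $\varphi(-1_1)^2 = 1_2$, so $\varphi(-1_1)\in\{\pm 1_2\}$. To pin down the correct value I would split into cases. If $-1_1\neq 1_1$, injectivity rules out $\varphi(-1_1) = \varphi(1_1) = 1_2$, leaving $\varphi(-1_1) = -1_2$. If instead $-1_1 = 1_1$, then $\varphi(-1_1) = 1_2$; to see this still equals $-1_2$, pull $-1_2$ back along the surjection as $\varphi(a) = -1_2$, note $\varphi(a^2) = (-1_2)^2 = 1_2 = \varphi(1_1)$ so $a^2 = 1_1$ by injectivity, whence $a\in\{1_1,-1_1\} = \{1_1\}$ and $-1_2 = \varphi(a) = \varphi(1_1) = 1_2$. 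Thus $\varphi(-1_1) = -1_2$ in every case. Consequently, for all $\alpha$, $\varphi(-\alpha) = \varphi((-1_1)\cdot_1\alpha) = \varphi(-1_1)\cdot_2\varphi(\alpha) = -\varphi(\alpha)$, which with $\varphi(0_1) = 0_2$ shows $\varphi$ is a $\mathbb{Z}$-map.

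For the inverse statement, take any $\alpha\in F_1^*$ admitting an inverse $\alpha^{-1}$. Applying $\varphi$ to $\alpha\cdot_1\alpha^{-1} = 1_1 = \alpha^{-1}\cdot_1\alpha$ gives $\varphi(\alpha)\cdot_2\varphi(\alpha^{-1}) = 1_2 = \varphi(\alpha^{-1})\cdot_2\varphi(\alpha)$, so by uniqueness of two-sided inverses in the monoid $F_2$ we conclude $\varphi(\alpha^{-1}) = \varphi(\alpha)^{-1}$.

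The only genuinely delicate point is the determination of $\varphi(-1_1)$: the relation $\varphi(-1_1)^2 = 1_2$ by itself admits both $1_2$ and $-1_2$ as candidates, and discarding the wrong one requires separating the degenerate case $-1_1 = 1_1$ from the generic one, invoking injectivity in the latter and surjectivity in the former. Every other step is a routine transport of a monoid identity through the multiplicative bijection.
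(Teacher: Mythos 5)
Your proof is correct and follows essentially the same route as the paper: transport the monoid identities $0_1\cdot_1\alpha=0_1$, $(-1_1)\cdot_1(-1_1)=1_1$, and $\alpha\cdot_1\alpha^{-1}=1_1=\alpha^{-1}\cdot_1\alpha$ through $\varphi$ and exploit bijectivity at each step. In fact your determination of $\varphi(-1_1)$ is more complete than the paper's own argument, which only derives $\varphi(-1_1)\in\{1_2,-1_2\}$ and then asserts the $\mathbb{Z}$-map property without explicitly excluding $\varphi(-1_1)=1_2$; your case split --- injectivity when $-1_1\neq 1_1$, and the pull-back argument showing $-1_2=1_2$ when $-1_1=1_1$ --- closes exactly that gap.
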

\begin{proof}
Let $\varphi$ be a multiplicative bijection. By definition, we have $\varphi (1_1) =1_2$.  We also know $0_1 \cdot \alpha = 0_1$. Therefore, $\begin{array}{lrll} \varphi(0_1 \cdot_1 \alpha) &=& \varphi(0_1) \Leftrightarrow  \varphi(0_1) \cdot_2 \varphi(\alpha) = \varphi(0_1). \\
\end{array}
$ Since $\varphi$ is a bijection we obtain the $  \varphi(0_1) \cdot_2 \beta=   \varphi(0_1) $, for all $\beta \in F_1$. Since $0_2$ is the only element of $F_2$ having this property, we get $\varphi(0_1)=0_2$.
Since $(-1_1)^{2} = 1_1$, we have $\varphi ((-1_1)\cdot_1 (-1_1)) = \varphi (-1_1)\cdot_2 \varphi (-1_1) = \varphi (1_1) = 1_2$. Thus, $(\varphi (-1_1))^{2} = 1_2$, so that $\varphi (- 1_1) \in \lbrace 1_1 , - 1_1\rbrace$, since the 
solution set of the polynomial $x^{2} = 1_1$ is  $ \lbrace 1_1 , - 1_1\rbrace$, by definition of a scalar group. Since $\varphi$ is a monoid morphism, this proves that $\varphi$ is a $\mathbb{Z}$-map concluding the proof.  From the multiplicativity and the bijectivity of the map $\varphi$ it is not hard to prove that for any $\alpha \in F_{1}^{*}$, we have $\varphi (\alpha^{-1}) = \varphi (\alpha)^{-1}$.
\end{proof}
\begin{rem} 
When $(F_1, +_1, \cdot_1)$ and $(F_2, +_2, \cdot_2)$ are fields, any multiplicative bijection $\varphi$ from $(F_1, +_1, \cdot_1)$ to $(F_2, +_2, \cdot_2)$ sends a primitive $n^{th}$ root of unity in $F_1$  to a primitive $n^{th}$ root of unity in $F_2$.  To prove that any primitive $n^{th}$ root of unity is sent to a primitive $n^{th}$ root of unity via $\varphi$, since $\varphi$ is multiplicative it is enough to prove that any primitive $(p^n)^{th}$ root of unity is sent to a primitive $(p^n)^{th}$ root of unity via $\varphi$. And this can be proven with induction on $n$. 
\end{rem} 
The following Lemma establishes properties of a quasi-multiplicative map. 
\begin{lemm} \label{scalarhom} Let $(F_1,\cdot_1,1_1,0_1,-1_1)$, $(F_2,\cdot_2,1_2,0_2,-1_2)$ be scalar monoids, and $\phi$ be a quasi-multiplicative map from $(F_1,\cdot_1,1_1,0_1,-1_1)$ to $(F_2,\cdot_2,1_2,0_2,-1_2)$ associated with $\lambda \in F_2$ and a multiplicative map $\varphi$. $\phi$ is a bijection if and only if $\lambda\in F_2^\times$ and $\varphi$ is a bijection. In particular, if $\phi$ is a scalar map associated with $\lambda \in F_2$, then $\phi$ is a bijection if and only if $\lambda \in F_2^\times$. 
\end{lemm}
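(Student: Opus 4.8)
The plan is to read $\phi$ as a composition and analyse its two factors separately. Writing $R_\lambda\colon F_2\to F_2$, $x\mapsto x\cdot_2\lambda$, for right multiplication by $\lambda$, the map $\phi$ is exactly $R_\lambda\circ\varphi$ (in the left quasi-multiplicative case one replaces $R_\lambda$ by left multiplication $L_\lambda$ and the argument is symmetric). The reverse implication is then immediate: if $\lambda\in F_2^\times$ then $R_\lambda$ is a bijection with two-sided inverse $R_{\lambda^{-1}}$, using only associativity and $\lambda^{-1}\cdot_2\lambda=\lambda\cdot_2\lambda^{-1}=1_2$, so $\phi=R_\lambda\circ\varphi$ is a composition of bijections. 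The ``in particular'' scalar-map statement is the special case $\varphi=\operatorname{Id}$, and I note already here that it will come out cleanly precisely because there $\varphi$ is trivially surjective.

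For the forward implication I assume $\phi$ is a bijection and extract information from injectivity and surjectivity in turn. First $\lambda\neq 0_2$, since $0_2$ being absorbing would force $\phi\equiv 0_2$, impossible as $0_1\neq 1_1$. Injectivity of $\phi$ gives injectivity of $\varphi$ directly, because $\varphi(\alpha)=\varphi(\beta)$ implies $\phi(\alpha)=\phi(\beta)$. Surjectivity of $\phi$ produces $\alpha_0:=\phi^{-1}(1_2)$ with $\varphi(\alpha_0)\cdot_2\lambda=1_2$; setting $\mu:=\varphi(\alpha_0)$ this exhibits $\mu$ as a left inverse of $\lambda$ and shows that $R_\lambda$ is surjective. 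Multiplicativity of $\varphi$ together with $\mu\cdot_2\lambda=1_2$ then yields the two identities $\phi(\alpha\cdot_1\alpha_0)=\varphi(\alpha)$ and $\phi(\alpha_0\cdot_1\alpha)=\mu\cdot_2\phi(\alpha)$, valid for every $\alpha\in F_1$, which I would use to transport bijectivity between the factors.

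The crux, and the step I expect to be the main obstacle, is upgrading the one-sided inverse $\mu$ of $\lambda$ to a genuine two-sided inverse, equivalently proving that $\varphi$ is onto. The natural target is the idempotent $e:=\lambda\cdot_2\mu$, which satisfies $R_\lambda(e)=\lambda\cdot_2\mu\cdot_2\lambda=\lambda=R_\lambda(1_2)$; since $R_\lambda$ is injective on $\operatorname{Im}\varphi$ (because $\phi$ is injective) and $1_2\in\operatorname{Im}\varphi$, one concludes $e=1_2$ — hence $\lambda\in F_2^\times$ and then $\varphi=R_{\lambda^{-1}}\circ\phi$ is a bijection — provided one also knows $e\in\operatorname{Im}\varphi$, that is $\lambda\in\operatorname{Im}\varphi$. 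This is the delicate point: in a general scalar monoid there is no cancellation, and surjectivity of $\phi$ only ever delivers that $\lambda$ is left-invertible (through $R_\lambda$), never right-invertible, so the passage from a one-sided to a two-sided inverse genuinely reduces to establishing surjectivity of $\varphi$. I would close the gap in either of the two settings that actually occur: for the scalar-map case $\varphi=\operatorname{Id}$ one has $\operatorname{Im}\varphi=F_2$ for free, so $e=1_2$ is immediate; and for the scalar-group case relevant to the rest of the paper $(F_2^*,\cdot_2)$ is a group, whence $\lambda\in F_2^\times$ collapses to the already-established $\lambda\neq 0_2$ and the promotion step is vacuous. Absent such a hypothesis the remaining work is precisely to prove $\varphi$ surjective, and that is where I would concentrate the effort.
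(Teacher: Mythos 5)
Everything you actually prove is correct, and the step you declined to claim is not a defect of your proposal: it is precisely the step at which the paper's own proof breaks down. Up to notation (the paper's $\iota$ is your $\alpha_0$, its $\varphi(\iota)$ your $\mu$), your easy direction, your proof that $\varphi$ is injective, and your extraction of the left inverse $\varphi(\iota)\cdot_2\lambda=1_2$ coincide with the paper's. To finish the forward direction at scalar-monoid generality the paper introduces $\chi\colon\operatorname{Im}(\varphi)\to F_1$, $\chi(\beta)=\varphi_I^{-1}(\beta)\cdot_1\iota$, checks $\phi\circ\chi=\operatorname{id}_{\operatorname{Im}(\varphi)}$, argues by uniqueness of inverses that $\chi\circ\phi=\operatorname{id}$ on a suitable set, and then computes $\chi\circ\phi(\alpha)=\varphi_I^{-1}(\varphi_I(\alpha)\cdot_2\lambda)\cdot_1\iota=\alpha\cdot_1\varphi_I^{-1}(\lambda\cdot_2\varphi_I(\iota))$, so that evaluation at $\alpha=1_1$ yields $\lambda\cdot_2\varphi(\iota)=1_2$, whence $\lambda\in F_2^\times$. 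The flaw is in that chain of equalities: splitting $\varphi_I^{-1}(\varphi_I(\alpha)\cdot_2\lambda)$ as $\alpha\cdot_1\varphi_I^{-1}(\lambda)$ and recombining $\varphi_I^{-1}(\lambda)\cdot_1\iota$ as $\varphi_I^{-1}(\lambda\cdot_2\varphi_I(\iota))$ uses the multiplicativity of $\varphi_I^{-1}$ at arguments involving $\lambda$, which presupposes $\lambda\in\operatorname{Im}(\varphi)$ --- exactly the hypothesis you identified as the crux. (There is also a domain slip: $\chi\circ\phi$ is evaluated at $\alpha\in\operatorname{Im}(\varphi)\subseteq F_2$, although $\phi$ is defined on $F_1$.) So the paper does not close the gap; it hides it.

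Your caution is in fact vindicated: the forward implication is \emph{false} for general scalar monoids, so no repair is possible. Work on the ordered set $Z=\{\star<0<1<2<\cdots\}$ and compose maps left-to-right, so that $st$ means ``first $s$, then $t$''. Put $\lambda(\star)=\lambda(0)=\star$, $\lambda(m)=m-1$ for $m\ge 1$; $\mu(\star)=\star$, $\mu(m)=m+1$; $c(\star)=\star$, $c(0)=c(1)=0$, $c(m)=m-1$ for $m\ge 2$; and for any map $t$ fixing $\star$ with $t(\mathbb{N})\subseteq\mathbb{N}$ define $f(t)$ by $f(t)(\star)=\star$, $f(t)(0)=0$, $f(t)(m)=t(m-1)+1$. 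Pointwise checks give $\mu\lambda=1$, $\lambda\mu\neq 1$, $\lambda t=f(t)\lambda$, $\lambda^2=c\lambda$, and that $f$ is a unital monoid endomorphism. Let $T$ be the monoid generated by $\{f^k(\mu),\,f^k(c):k\ge 0\}$; it is $f$-stable, and all its elements are order-preserving and carry $\mathbb{N}$ into $\mathbb{N}$. Then $M:=T\lambda$ is a monoid, since $1=\mu\lambda$ and $(t\lambda)(t'\lambda)=t(\lambda t')\lambda=tf(t')c\lambda\in T\lambda$; it contains $T$ (because $t=(t\mu)\lambda$) and $\lambda$; and $t\mapsto t\lambda$ is a bijection of $T$ onto $M$, injective because $\lambda$ is injective on $\mathbb{N}$. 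Every element of $T$ and of $M$ is order-preserving, and an order-preserving bijection of $Z\cong\omega$ is the identity, so $x^2=1$ has only the trivial solution in both; adjoining $0$ and formal signs therefore produces scalar monoids $F_1$ (from $T$) and $F_2$ (from $M$). The inclusion $T\subseteq M$ induces a multiplicative map $\varphi\colon F_1\to F_2$, and $\phi(\alpha)=\varphi(\alpha)\cdot_2\lambda$ is then a quasi-multiplicative bijection; yet $\lambda$ is not invertible in $F_2$ (it is not even injective on $Z$) and $\varphi$ is not surjective ($\lambda\notin T$, since elements of $T$ preserve $\mathbb{N}$).

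The upshot is that your restricted claims delimit the lemma's true scope. The equivalence does hold whenever $\lambda\in\operatorname{Im}(\varphi)$ --- your idempotent argument then gives $\lambda\mu=1_2$, and $\varphi=\phi(\,\cdot\,)\cdot_2\lambda^{-1}$ is a bijection --- in particular when $\varphi=\operatorname{Id}$ (the scalar-map clause) and, trivially, when $F_2$ is a scalar group, where $F_2^{*}=F_2^{\times}$. These cases cover the legitimate later uses: Lemma \ref{qmc} assumes $\varphi$ bijective from the outset, and Lemma \ref{isomorph_add}, Theorem \ref{elemtheo} and Theorem \ref{bij+} invoke the statement only for near-fields, i.e.\ scalar groups. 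However, Lemma \ref{eqm}, which cites Lemma \ref{scalarhom} at scalar-monoid generality to upgrade an arbitrary quasi-multiplicative endobijection to one with $\varphi$ bijective and $\lambda\in F^{\times}$, inherits the gap. In short, you have not missed an idea that the paper supplies; the statement itself needs an extra hypothesis, and your proposal isolates the right one.
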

\begin{proof}
Let $\phi$ be a quasi-multiplicative map associated with $\lambda \in F_2^{\times}$ and $\varphi$ be a multiplicative bijection. 

Let $\alpha_{1},\alpha_{2} \in F_{1}$ and suppose $\phi(\alpha_{1}) = \phi(\alpha_{2})$. Thus, $\varphi(\alpha_{1})\cdot_{2} \lambda = \varphi(\alpha_{2}) \cdot_{2} \lambda$. Since, $\lambda \in F_2^{\times}$, we have $\varphi(\alpha_{1}) = \varphi(\alpha_{2})$. Since 
$\varphi$ is injective, we have $\alpha_{1} = \alpha_{2}$. Thus, $\phi$ is 
injective.
Consider $\gamma \in F_2$. We want to show there exists $\alpha \in F_1$ such that $\phi(\alpha) = \gamma$. However, $\phi(\alpha) = \varphi(\alpha) \cdot_2 \lambda$ and since $\lambda \in F_2^{\times}$, this is equivalent to requiring the existence of $\alpha$ such that $\varphi(\alpha) = \gamma \cdot_2 \lambda^{-1}$. But, $\varphi$ is surjective, hence such an $\alpha$ does exist. Thus, $\phi$ is surjective. We conclude that $\phi$ is a 
bijection.

Conversely, suppose that $\phi$ is a bijection. Then there is $\iota \in F_1$ such that $1_2 =\phi(\iota)= \varphi(\iota) \cdot_2 \lambda$. We first prove that $\varphi$ is one-to-one. 
Indeed, by contradiction, if $\varphi$ is not one-to-one we get $\varphi( \alpha_1 )= \varphi( \alpha_2 )$, for some $\alpha_1\neq \alpha_2$. But then $\phi( \alpha_1) =\phi( \alpha_2)$. 
Contradicting that $\phi$ is a bijection. Thus $\varphi$ is a bijection onto its image. We write $\varphi_I^{-1}$ to be the inverse of $\varphi$ co-restricted to its image.
We set $\chi: \operatorname{Im} (\varphi) \rightarrow F_1$ to be the map defined by $\chi (\alpha) = \varphi_I^{-1} (\alpha) \cdot_1 \iota$, where $\alpha \in \operatorname{Im} (\varphi)$. Then, since $\varphi$ is multiplicative, for $\alpha\in \operatorname{Im} (\varphi)$, $\phi \circ \chi (\alpha) = \phi ( \varphi^{-1} (\alpha ) \cdot_1 \iota) = \varphi (\varphi^{-1} (\alpha ) \cdot_1 \iota) \cdot_2 \lambda = \alpha \cdot_2 ( \varphi (\iota) \cdot_2 \lambda )= \alpha$. 
Since $\phi$ is a bijection, $\phi_I$ restricted to $\operatorname{Im} (\varphi)$ and co-restricted to $\operatorname{Im} (\chi)$ admits a unique inverse. Therefore, this inverse is $\chi$. As a consequence, since $\varphi$ is multiplicative, we also have for any $\alpha \in \operatorname{Im} (\varphi)$, $\chi \circ \phi ( \alpha )= \chi ( \varphi_I ( \alpha ) \cdot_2 \lambda ) =  \varphi_I^{-1} ( \varphi_I ( \alpha ) \cdot_2 \lambda ) \cdot_1 \iota = \alpha \cdot_{1} \varphi_I^{-1} (\lambda \cdot_2 \varphi_I( \iota))= \alpha$. Then, since $\varphi(1_1)=1_2$, $1_2 \in \operatorname{Im}( \varphi)$, applying the equality to $\alpha =1_1$ we get $\varphi_I^{-1} (\lambda \cdot_2 \varphi_I( \iota)) = 1_1$ and $\lambda \cdot_2 \varphi_I( \iota) = \varphi_I (1_1)=1_2$. Thus, $\lambda \cdot_2 \varphi( \iota)=1_2$ and $\varphi(\iota)$ is the right and left inverse of $\lambda$ so that $\lambda \in F_2^\times$. Finally, $\varphi$ is onto. Indeed, let $\gamma \in F_2$. Then since $\phi$ is bijective, there is $\alpha\in F_1$ such that $\gamma  \cdot_2 \lambda = \phi(\alpha)$ and $\gamma = \phi(\alpha) \cdot_2 \lambda^{-1} = \varphi (\alpha)$ proving that $\varphi$ is onto. 
\end{proof}

With the next Lemma, we learn that multiplicative bijection send inverses to inverses. 
 \begin{lemm} \label{multihom} Let $(F_1,\cdot_1,1_1,0_1,-1_1)$, $(F_2,\cdot_2,1_2,0_2,-1_2)$ be scalar monoids and \\ 
 $\varphi: (F_1,\cdot_1,1_1,0_1,-1_1)\rightarrow (F_2,\cdot_2,1_2,0_2,-1_2)$ be a multiplicative bijection and $\lambda\in F_2^\times$. Then $\varphi (F_1^\times) = F_2^\times$ and the map sending $\alpha$ to $\lambda^{-1} \cdot_2 \varphi ( \alpha) \cdot_2 \lambda$ is also multiplicative bijection.
 \end{lemm}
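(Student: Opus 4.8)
The plan is to split the statement into its two assertions and treat them in turn: first the set equality $\varphi(F_1^\times) = F_2^\times$, and then the claim that the conjugated map $\psi \colon \alpha \mapsto \lambda^{-1} \cdot_2 \varphi(\alpha) \cdot_2 \lambda$ is again a multiplicative bijection. The only inputs needed are that $\varphi$ is a multiplicative bijection (so in particular $\varphi(1_1) = 1_2$ by definition of a multiplicative map) and that $\lambda \in F_2^\times$. The point to keep in mind throughout is that we work in scalar \emph{monoids}, where a nonzero element need not be invertible and the multiplication need not be commutative; this is precisely why invertibility must be transported carefully along $\varphi$ and why the second map is built by conjugation rather than by one-sided multiplication.

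For the first assertion I would prove the two inclusions separately. For $\varphi(F_1^\times) \subseteq F_2^\times$, take $\alpha \in F_1^\times$ with inverse $\alpha^{-1}$; applying $\varphi$ to $\alpha \cdot_1 \alpha^{-1} = 1_1 = \alpha^{-1} \cdot_1 \alpha$ and using multiplicativity together with $\varphi(1_1) = 1_2$ shows that $\varphi(\alpha^{-1})$ is a two-sided inverse of $\varphi(\alpha)$, so $\varphi(\alpha) \in F_2^\times$. For the reverse inclusion, let $\beta \in F_2^\times$; by surjectivity write $\beta = \varphi(\alpha)$ and $\beta^{-1} = \varphi(\gamma)$ for some $\alpha, \gamma \in F_1$. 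Then $\varphi(\alpha \cdot_1 \gamma) = \beta \cdot_2 \beta^{-1} = 1_2 = \varphi(1_1)$, and injectivity of $\varphi$ forces $\alpha \cdot_1 \gamma = 1_1$; the symmetric computation gives $\gamma \cdot_1 \alpha = 1_1$, so $\alpha \in F_1^\times$ and $\beta = \varphi(\alpha) \in \varphi(F_1^\times)$. The essential step here — and the only place where the monoid-versus-group distinction matters — is the use of injectivity to deduce $\alpha \cdot_1 \gamma = 1_1$ from the equality of images, since we may not simply restrict $\varphi$ to a group homomorphism on nonzero elements.

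For the second assertion I would first verify multiplicativity of $\psi$ by a direct telescoping computation: $\psi(1_1) = \lambda^{-1} \cdot_2 1_2 \cdot_2 \lambda = 1_2$, and for $\alpha, \beta \in F_1$ the inner factor $\lambda \cdot_2 \lambda^{-1} = 1_2$ cancels, giving
\[
\psi(\alpha) \cdot_2 \psi(\beta) = \lambda^{-1} \cdot_2 \varphi(\alpha) \cdot_2 \varphi(\beta) \cdot_2 \lambda = \lambda^{-1} \cdot_2 \varphi(\alpha \cdot_1 \beta) \cdot_2 \lambda = \psi(\alpha \cdot_1 \beta).
\]
For bijectivity I would exhibit $\psi$ as the composite $c_\lambda \circ \varphi$, where $c_\lambda \colon x \mapsto \lambda^{-1} \cdot_2 x \cdot_2 \lambda$ is conjugation by $\lambda$. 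Since $\lambda \in F_2^\times$, the map $c_{\lambda^{-1}} \colon y \mapsto \lambda \cdot_2 y \cdot_2 \lambda^{-1}$ is a two-sided inverse of $c_\lambda$ (the products $\lambda \cdot_2 \lambda^{-1}$ and $\lambda^{-1} \cdot_2 \lambda$ again collapse to $1_2$), so $c_\lambda$ is a bijection of $F_2$; composing it with the bijection $\varphi$ yields a bijection, which completes the proof.

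Since every step is a short direct verification, there is no genuine obstacle to overcome; the only care required is to avoid assuming commutativity (hence the explicit conjugation $c_\lambda$ with inverse $c_{\lambda^{-1}}$, rather than a single multiplication) and to lean on injectivity rather than on a group structure when pulling invertibility back along $\varphi$.
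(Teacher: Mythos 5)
Your proof is correct and takes essentially the same route as the paper's: the inclusion $\varphi(F_1^\times)\subseteq F_2^\times$ by applying $\varphi$ to the two inverse relations, the reverse inclusion by pulling invertibility back through the bijection (the paper phrases this as ``use $\varphi^{-1}$,'' which is just your injectivity/surjectivity argument packaged as the multiplicativity of $\varphi^{-1}$), and the identical telescoping computation for multiplicativity of the conjugated map. The only divergence is in how bijectivity of $\psi$ is certified: you factor $\psi = c_\lambda\circ\varphi$ as a composite of bijections, whereas the paper writes an explicit inverse --- and in fact your version is the cleaner one, since the paper's stated formula $\lambda\cdot_2\varphi^{-1}(\alpha)\cdot_2\lambda^{-1}$ places the conjugation outside $\varphi^{-1}$ and does not even typecheck when $F_1\neq F_2$, while your $\psi^{-1}(\beta)=\varphi^{-1}\left(\lambda\cdot_2\beta\cdot_2\lambda^{-1}\right)$ is the correct inverse.
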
 
  \begin{proof}
 Let $\alpha \in F_1^\times$, then there is $\beta \in F_1$ such that $\alpha \cdot_1 \beta =1_1$ and $\beta \cdot_1 \alpha=1_1$. Thus, $1_2= \varphi (1_1) = \varphi (\alpha \cdot_1 \beta) = \varphi (\alpha) \cdot_2 \varphi (\beta)$ and $1_2= \varphi (1_1) = \varphi ( \beta \cdot_1 \alpha) =  \varphi (\beta) \cdot_2 \varphi (\alpha)$. That is $\varphi (\alpha)\in F_1^\times$. The converse inclusion is proved similarly using $\varphi^{-1}$. 
 
 We prove that the map $\psi$ sending $\alpha$ to $\lambda^{-1} \cdot_2 \varphi ( \alpha) \cdot_2 \lambda$ is also multiplicative bijection. For all $\alpha,\beta \in F_1$, since $\varphi$ is multiplicative by assumption, we have 
$$\begin{array}{lll} \psi (\alpha) \cdot_2 \psi (\beta)&=& ( \lambda^{-1} \cdot_2 \varphi( \alpha) \cdot_2 \lambda) \cdot_2 ( \lambda^{-1} \cdot_2\varphi ( \beta) \cdot_2 \lambda)\\
&=& \lambda^{-1} \cdot_2 \varphi( \alpha) \cdot_2\varphi ( \beta) \cdot_2 \lambda =\lambda^{-1} \cdot_2 \varphi ( \alpha\cdot_1 \beta) \cdot_2 \lambda = \psi(  \alpha\cdot_1 \beta).   \end{array}$$
and its inverse is the map $\psi^{-1}$ sending $\alpha$ to $\lambda \cdot_2 \varphi^{-1} ( \alpha) \cdot_2 \lambda^{-1}$.

 \end{proof}
The next Lemma gives a different useful characterization for a quasi-multiplicative bijection. 
\begin{lemm} \label{qmc} Let $(F_1,\cdot_1,1_1,0_1,-1_1)$, $(F_2,\cdot_2,1_2,0_2,-1_2)$ be scalar monoids.\\ Let $\phi : (F_1,\cdot_1,1_1,0_1,-1_1)\rightarrow (F_2,\cdot_2,1_2,0_2,-1_2)$ be a map. %Let $\lambda\in F_2^*$ and $\varphi : (F_1,\cdot_1) \rightarrow (F_2,\cdot_2)$ be a map. 
The following assertions are equivalent. 
\begin{enumerate}
\item $\phi$ is a right quasi-multiplicative bijection associated with $\varphi $ a multiplicative bijection and $\lambda$; 
\item $\phi$ is a bijection such that for all $\alpha, \beta \in F_1$, $\phi ( \alpha \cdot_1 \beta) = \phi (\alpha )  \cdot_2 \phi(\gamma \cdot_1 \beta)$, the element $\gamma$ is such that $ \phi (\gamma)=1_2$ and $\lambda = \phi(1_1)$. Moreover, $\gamma$ and $\lambda$ are invertible;
 \item  $\phi$ is a bijection such that for all $\alpha, \beta \in F_1$, $\phi ( \alpha \cdot_1 \beta) = \phi (\alpha \cdot_1 \gamma ) \cdot_2   \phi(\beta)$, the element $\gamma$ is such that $ \phi (\gamma)=1_2$ and $\lambda=\phi(1_1)$. Moreover, $\gamma$ and $\lambda$ are invertible;
 \item the map $\varphi$ that sends $\alpha$ to $\phi(\alpha
\cdot_1\gamma)$ is a multiplicative bijection, the element $\gamma$ is such that $ \phi (\gamma)=1_2$ and $\lambda=\phi(1_1)$. Moreover, $\gamma$ and $\lambda$ are invertible;
 \item the map $\varphi^{\lambda}$ that sends $\alpha$ to $ \phi(\gamma \cdot_1 \alpha)$ is a multiplicative bijection, $\varphi$ is the map sending $\alpha$ to $\lambda\cdot_2 \varphi^{\lambda} (\alpha)\cdot_2 \lambda^{-1}$, the element $\gamma$ is such that $ \phi (\gamma)=1_2$ and $\lambda=\phi(1_1)$. Moreover, $\gamma$ and $\lambda$ are invertible.
 \end{enumerate}
 In particular, in 1., we have that $\varphi$ and $\lambda$ are unique.
\end{lemm}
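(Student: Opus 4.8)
The plan is to anchor everything on the equivalence $(1)\Leftrightarrow(4)$, since condition $(4)$ is essentially condition $(1)$ rewritten in an invertible, self-referential form, and then to reduce the functional equations $(2)$, $(3)$ to $(4)$ and to treat $(5)$ via the conjugation result of Lemma~\ref{multihom}. Throughout I fix $\gamma$ to be the unique element with $\phi(\gamma)=1_2$ (it exists and is unique precisely because $\phi$ is a bijection), set $\lambda:=\phi(1_1)$, and write $\varphi$ for the endomap $\alpha\mapsto\phi(\alpha\cdot_1\gamma)$. In each of $(2)$--$(5)$ the invertibility of $\gamma$ and $\lambda$ is part of the hypothesis, so $\alpha\mapsto\alpha\cdot_1\gamma$ is a bijection of $F_1$ and $\lambda^{-1}$ is available; these two facts are what make the bookkeeping go through.

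First I would show that $(1)$ implies each of $(2)$--$(5)$ by direct substitution. Writing $\phi(\alpha)=\varphi(\alpha)\cdot_2\lambda$ with $\varphi$ a multiplicative bijection, evaluating at $1_1$ gives $\phi(1_1)=\lambda$, and the defining property $\phi(\gamma)=1_2$ forces $\varphi(\gamma)=\lambda^{-1}$; moreover $\gamma=\varphi^{-1}(\lambda^{-1})\in F_1^\times$ by Lemma~\ref{multihom}. From $\varphi(\gamma)=\lambda^{-1}$ one checks that $\alpha\mapsto\phi(\alpha\cdot_1\gamma)$ coincides with the given $\varphi$, which is $(4)$; the functional equations in $(2)$ and $(3)$ then follow by inserting $\varphi(\gamma)\cdot_2\lambda=1_2$ at the appropriate spot, and $(5)$ follows because $\alpha\mapsto\phi(\gamma\cdot_1\alpha)=\varphi(\gamma)\cdot_2\varphi(\alpha)\cdot_2\lambda=\lambda^{-1}\cdot_2\varphi(\alpha)\cdot_2\lambda$ is a multiplicative bijection by Lemma~\ref{multihom}.

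For the converses I would first prove $(4)\Rightarrow(1)$, the cleanest step: if $\varphi\colon\alpha\mapsto\phi(\alpha\cdot_1\gamma)$ is a multiplicative bijection then $\varphi(\gamma^{-1})=\phi(\gamma^{-1}\cdot_1\gamma)=\phi(1_1)=\lambda$, whence for all $\beta$ we get $\phi(\beta)=\phi\big((\beta\cdot_1\gamma^{-1})\cdot_1\gamma\big)=\varphi(\beta\cdot_1\gamma^{-1})=\varphi(\beta)\cdot_2\lambda$, so $\phi$ is right quasi-multiplicative with $\lambda$ invertible by hypothesis. Next, $(2)\Rightarrow(4)$ and $(3)\Rightarrow(4)$: in both cases $\varphi=\phi(\,\cdot\,\cdot_1\gamma)$ is a bijection as a composite of bijections with $\varphi(1_1)=\phi(\gamma)=1_2$, and multiplicativity is extracted from the functional equation by substituting a suitable conjugate of the inner variable (for $(2)$, apply the relation at $\alpha\cdot_1\gamma$ and $\gamma^{-1}\cdot_1\beta\cdot_1\gamma$; for $(3)$, at $\alpha$ and $\beta\cdot_1\gamma$), so that the right-hand side collapses to $\phi(\alpha\cdot_1\beta\cdot_1\gamma)=\varphi(\alpha\cdot_1\beta)$. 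Finally $(5)\Rightarrow(1)$ mirrors $(4)\Rightarrow(1)$: from $\varphi^{\lambda}(\gamma^{-1})=\phi(1_1)=\lambda$ one gets $\phi(\beta)=\lambda\cdot_2\varphi^{\lambda}(\beta)$, and since $\varphi(\beta)=\lambda\cdot_2\varphi^{\lambda}(\beta)\cdot_2\lambda^{-1}$ is a multiplicative bijection by Lemma~\ref{multihom}, this rearranges to $\phi(\beta)=\varphi(\beta)\cdot_2\lambda$.

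For the uniqueness clause I would suppose $\phi(\alpha)=\varphi_1(\alpha)\cdot_2\lambda_1=\varphi_2(\alpha)\cdot_2\lambda_2$ with both $\varphi_i$ multiplicative bijections and $\lambda_i$ invertible; evaluating at $1_1$ and using $\varphi_i(1_1)=1_2$ gives $\lambda_1=\lambda_2$, and then right-cancelling the common invertible $\lambda$ yields $\varphi_1=\varphi_2$. The only genuinely delicate point is the non-commutativity of $\cdot_2$: since $F_2$ need not be commutative, one must keep strict track of the side on which $\lambda$ and $\lambda^{-1}$ appear, and it is exactly this that separates $\varphi$ from $\varphi^{\lambda}$ and makes $(5)$ require the conjugation lemma rather than a bare substitution. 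I expect this left/right bookkeeping--rather than any conceptual difficulty--to be the main obstacle, and picking the substitutions in $(2)\Rightarrow(4)$ and $(3)\Rightarrow(4)$ that produce $\phi(\alpha\cdot_1\beta\cdot_1\gamma)$ on the nose is the one spot that needs a moment's care.
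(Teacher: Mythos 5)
Your proposal is correct, and it is organized genuinely differently from the paper's proof. The paper establishes the equivalence as a single cycle $1 \Rightarrow 2 \Rightarrow 3 \Rightarrow 4 \Rightarrow 5 \Rightarrow 1$, whereas you use a hub-and-spoke scheme anchored at $(1)$ and $(4)$: you prove $1 \Rightarrow 2,3,4,5$ by direct substitution, then $2 \Rightarrow 4$, $3 \Rightarrow 4$, $4 \Rightarrow 1$, $5 \Rightarrow 1$ (your $3 \Rightarrow 4$ coincides with the paper's step, and your $2 \Rightarrow 4$ compresses the paper's $2 \Rightarrow 3 \Rightarrow 4$ into one substitution $\alpha \mapsto \alpha \cdot_1 \gamma$, $\beta \mapsto \gamma^{-1}\cdot_1 \beta \cdot_1 \gamma$). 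The main payoff of your route is in the converse directions: evaluating at $\gamma^{-1}$ gives $\varphi(\gamma^{-1}) = \phi(1_1) = \lambda$ (resp.\ $\varphi^{\lambda}(\gamma^{-1}) = \lambda$), from which $\phi(\beta) = \varphi(\beta)\cdot_2 \lambda$ falls out in one line; the paper instead routes $4 \Rightarrow 5 \Rightarrow 1$ through the conjugation identity $\phi(\gamma \cdot_1 \alpha) = \lambda^{-1}\cdot_2 \phi(\alpha \cdot_1 \gamma) \cdot_2 \lambda$ plus a side computation showing $\lambda^{-1}\cdot_2\varphi(\gamma)^{-1} = 1_2$, which is noticeably heavier. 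You also supply an explicit argument for the final uniqueness clause (evaluate at $1_1$ to pin down $\lambda$, then right-cancel the invertible $\lambda$), which the paper asserts but never proves; this is a small but real improvement. The cost is bookkeeping: you prove eight implications where the paper proves five, though several of yours are one-liners, and both arguments lean on the same two ingredients, Lemma \ref{scalarhom} for invertibility of $\lambda$ and Lemma \ref{multihom} for conjugation by $\lambda$.
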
 

\begin{proof}
1. $\Rightarrow$ 2. Let $\phi : (F_1,\cdot_1,1_1,0_1,-1_1)\rightarrow (F_2,\cdot_2,1_2,0_2,-1_2)$ be a right quasi multiplicative bijection associated with $\varphi$ and $\lambda$. Let $\alpha , \beta \in F_1$. Then, 
$$\phi(\alpha \cdot_1 \beta) = \varphi(\alpha \cdot_1 \beta) \cdot_2 \lambda = \varphi(\alpha) \cdot_2 \varphi(\beta) \cdot_2 \lambda.$$ From the Lemma \ref{scalarhom} , we know that $\lambda \in F_2^\times$, therefore we can write 
$$\begin{array}{lll} \varphi(\alpha) \cdot_2 \varphi(\beta) \cdot_2 \lambda &=& \varphi(\alpha) \cdot_2 \lambda \cdot_2 \lambda^{-1} \cdot_2 \varphi(\beta) \cdot_2 \lambda = \varphi(\alpha) \cdot_2 \lambda \cdot_2 \varphi(\varphi^{-1}(\lambda^{-1})) \cdot_2 \varphi(\beta) \cdot_2 \lambda\\
&=& \varphi(\alpha) \cdot_2 \lambda \cdot_2 \varphi(\varphi^{-1}(\lambda^{-1}) \cdot_1 \beta) \cdot_2 \lambda
\end{array} .$$ Moreover, $\lambda=\phi(1_1)$. Setting $\gamma := \varphi^{-1}(\lambda^{-1})$, we observe that $\phi(\gamma) = \varphi ( \varphi^{-1}(\lambda^{-1})) \cdot_2 \lambda = 1_2$ and $\gamma$ is invertible since $\lambda$ is invertible. We finally have $\phi(\alpha \cdot_1 \beta) = \phi(\alpha) \cdot_2 \phi(\gamma \cdot_1 \beta)$. \\
2. $\Rightarrow$ 3. Suppose that $2.$ is true. Let $\alpha, \beta \in F_1$, 
$$\phi ( \alpha \cdot_1 \beta)= \phi ( (\alpha \cdot_1 \gamma) \cdot_1 (\gamma^{-1} \cdot_1 \beta))= \phi (\alpha \cdot_1 \gamma )  \cdot_2 \phi(\gamma \cdot_1 \gamma^{-1} \cdot_1 \beta) = \phi (\alpha \cdot_1 \gamma ) \cdot_2   \phi(\beta). $$
Moreover, $\varphi$ is a bijection since $\phi$ is a bijection and $\gamma$ is invertible, by assumption. 

3. $\Rightarrow$ 4. We assume 3. is true. We define the map $\varphi$ sending $\alpha$ to $\phi (\alpha \cdot_1 \gamma)$ for any $\alpha \in F$. Let $\alpha , \beta \in F_1$. Then 
$$\begin{array}{lll} \varphi(\alpha \cdot_1 \beta) &=& \phi((\alpha \cdot_1 \beta)\cdot _1 \gamma)= \phi(\alpha \cdot_1 ( \beta\cdot _1 \gamma)) \\
&=& \phi(\alpha \cdot_1 \gamma) \cdot_2 \phi( \beta\cdot _1 \gamma) = \varphi(\alpha ) \cdot_2 \varphi(\beta).\end{array} $$

%However,
%we know that $\lambda \in F_2^{\times}$, therefore, $\lambda \cdot_2 \lambda^{-1} = \lambda^{-1} \cdot_2 \lambda = 1$. We can thus write $\varphi(\alpha)\cdot_2 \lambda^{-1} \cdot_2 \lambda \cdot_2 \phi(\beta) \cdot_2 \lambda^{-1} = \varphi(\alpha)\cdot_2 \lambda \cdot_2 \lambda^{-1} \cdot_2 \phi(\beta) \cdot_2 \lambda^{-1} = \phi(\alpha) \cdot_2 \lambda^{-1} \cdot_2 \phi(\beta) \cdot_2 \lambda^{-1} = \varphi(\alpha) \cdot_2 \varphi(\beta)$. Thus,  $\varphi(\alpha \cdot_1 \beta) = \varphi(\alpha) \cdot_2 \varphi(\beta)$, a multiplicative map. \\
4. $\Rightarrow$ 5. We assume 4. is true. 
Let $\alpha \in F_1$. We now prove that $ \phi(\gamma \cdot_1 \alpha)=\lambda^{-1}\cdot_2 \phi( \alpha \cdot_1 \gamma)\cdot_2 \lambda$. Indeed, by assumption, we have
$$\begin{array}{lll} \phi( \gamma \cdot_1 \alpha) &=& \phi ( \gamma \cdot_1 \alpha \cdot_1 \gamma^{-1} \cdot_1 \gamma) =\phi(\gamma \cdot_1 \alpha \cdot_1 \gamma ) \cdot_2 \phi (1_1)  \\
&=& \phi (\gamma  \cdot_1 \gamma) \cdot_2 \phi(\alpha \cdot_1 \gamma) \cdot_2 \phi(1_1)
\end{array}$$
Finally, 
$$\phi( \gamma \cdot_1 \gamma)  \cdot_2 \phi(1) = \phi( \gamma \cdot_1 \gamma)  \cdot_2 \phi( \gamma^{-1} \cdot_1 \gamma ) = \phi(\gamma)=1_2.$$
Since $\lambda= \phi(1_1)$ is invertible, we have $\lambda^{-1} = \phi( \gamma \cdot_1 \gamma) $, proving the result. Finally, we deduce that $\varphi^\lambda$ is a multiplicative bijection from Lemma \ref{multihom}.

5. $\Rightarrow$ 1. Suppose that $5.$ is true. 
Then for $\alpha \in F$, by Lemma \ref{Zendo} , 
$$\begin{array}{lll} \phi (\alpha ) &=& \phi ((\gamma \cdot_1 \gamma^{-1}) \cdot_1 \alpha)= \varphi^{\lambda}(  \gamma^{-1}  \cdot_1 \alpha)=   \varphi^{\lambda}(  \gamma^{-1})\cdot_2  \varphi^{\lambda}( \alpha) \\
&=& \varphi^{\lambda}(  \gamma)^{-1}  \cdot_2  \varphi^{\lambda}( \alpha) =  (\lambda^{-1}  \cdot_2 \varphi (  \gamma)^{-1}   \cdot_2 \lambda) \cdot_2  (\lambda^{-1}  \cdot_2 \varphi( \alpha) \cdot_2 \lambda)  \\
&=&  \lambda^{-1}  \cdot_2 \varphi (  \gamma)^{-1}    \cdot_2 \varphi( \alpha) \cdot_2 \lambda= \varphi( \alpha) \cdot_2 \lambda
.\end{array}$$
Since $\lambda^{-1}  \cdot_2 \varphi (  \gamma)^{-1}=1_2$. Indeed, $\lambda^{-1}  \cdot_2 \varphi (  \gamma)^{-1}  \cdot_2 \lambda = \varphi^{\lambda} (  \gamma)^{-1}= \phi(\gamma \cdot_1 \gamma)^{-1} =\lambda$. The latter follows from the equality $\lambda=\phi(\gamma \cdot_1 \gamma)^{-1}$. This equality is a consequence of
$ \phi(\gamma \cdot_1 \gamma) \cdot_2 \lambda = \phi(\gamma \cdot_1 \gamma) \cdot_2 \phi(1_1) = \phi(\gamma \cdot_1 \gamma) \cdot_2 \phi(\gamma \cdot_1 \gamma^{-1})= \phi(\gamma )=1_2$, since $\lambda$ is invertible. 
%Then $\phi (\alpha ) =\phi(\alpha) \cdot_2 \lambda^{-1} \cdot_2 \lambda= \varphi(\alpha ) \cdot_2 \lambda$. That is $\phi$ is a right quasi-multiplicative bijection associated with a multiplicative bijection $\varphi : (F_1,\cdot_1) \rightarrow (F_2,\cdot_2)$ and $\lambda \in F_2^\times$; 
\end{proof}

 The next lemma gives the structure of the set of right quasi-multiplicative (resp. left quasi-multiplicative, resp. multiplicative, resp. left scalar, resp. right scalar, resp. $\mathbb{Z}$-) endobijections of $F$ 
 \begin{lemm}\label{eqm} Let $(F, \cdot , 1, 0, -1) $ be a scalar monoid. The set of right quasi-multiplicative (resp. left quasi-multiplicative, resp. multiplicative, resp. left scalar, resp. right scalar, resp. $\mathbb{Z}$-) endobijections of $F$ form a group under composition. 
 \end{lemm}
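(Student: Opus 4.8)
The plan is to exhibit each of these six families as a subgroup of the symmetric group $\mathrm{Sym}(F)$ of all endobijections of $F$ under composition. With this framing it suffices, in each case, to check three things: that the identity map lies in the family, that the family is closed under composition, and that it is closed under taking inverses. The identity belongs to every family simultaneously: it is the right (and left) scalar endobijection associated with $1 \in F^\times$, it equals the multiplicative bijection $\mathrm{id}$, it is the right (and left) quasi-multiplicative bijection associated with $\varphi = \mathrm{id}$ and $\lambda = 1$, and it visibly fixes $0$ and commutes with $\alpha \mapsto -\alpha$, so it is a $\mathbb{Z}$-bijection.

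For the scalar and $\mathbb{Z}$ cases the verifications are short. Writing a right scalar endobijection as $\phi_\lambda : \alpha \mapsto \alpha \cdot \lambda$ with $\lambda \in F^\times$ (invertibility of $\lambda$ being equivalent to bijectivity by Lemma \ref{scalarhom}), associativity of $\cdot$ gives $\phi_{\lambda_1} \circ \phi_{\lambda_2} = \phi_{\lambda_2 \cdot \lambda_1}$ and $\phi_\lambda^{-1} = \phi_{\lambda^{-1}}$, with $\lambda_2 \cdot \lambda_1,\ \lambda^{-1} \in F^\times$; the left scalar case is symmetric. For a $\mathbb{Z}$-bijection $\sigma$, closure and inverses follow by propagating $\sigma(0) = 0$ and $\sigma(-\alpha) = -\sigma(\alpha)$ through a composition and through $\sigma^{-1}$, using $-(-\alpha) = \alpha$, which holds because $(-1)^2 = 1$ in a scalar monoid. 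The multiplicative case is equally direct once one records that the inverse of a multiplicative bijection is again multiplicative: $\varphi^{-1}(1) = 1$, and writing $a = \varphi^{-1}(\alpha),\ b = \varphi^{-1}(\beta)$, the identity $\varphi(a\cdot b) = \alpha\cdot\beta$ yields $\varphi^{-1}(\alpha\cdot\beta) = \varphi^{-1}(\alpha)\cdot\varphi^{-1}(\beta)$.

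The genuine work, and the step I expect to be the main obstacle, is the quasi-multiplicative cases, where the scalar twist interacts with the multiplicative part. Writing a right quasi-multiplicative bijection as $\phi_i(\alpha) = \varphi_i(\alpha)\cdot \lambda_i$ with $\varphi_i$ a multiplicative bijection and $\lambda_i \in F^\times$, I would compute
$$(\phi_2 \circ \phi_1)(\alpha) = \varphi_2\big(\varphi_1(\alpha)\cdot\lambda_1\big)\cdot\lambda_2 = (\varphi_2 \circ \varphi_1)(\alpha)\cdot\big(\varphi_2(\lambda_1)\cdot\lambda_2\big),$$
so the composite is right quasi-multiplicative with multiplicative part $\varphi_2 \circ \varphi_1$ and scalar $\varphi_2(\lambda_1)\cdot\lambda_2$; here $\varphi_2(\lambda_1) \in F^\times$ by Lemma \ref{multihom} (which gives $\varphi_2(F^\times) = F^\times$), so the scalar lies in $F^\times$ and Lemma \ref{scalarhom} returns bijectivity. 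For the inverse, solving $\beta = \varphi(\alpha)\cdot\lambda$ gives
$$\alpha = \varphi^{-1}(\beta\cdot\lambda^{-1}) = \varphi^{-1}(\beta)\cdot\varphi^{-1}(\lambda)^{-1},$$
using that $\varphi^{-1}$ is multiplicative and sends inverses to inverses (Lemma \ref{Zendo}, Lemma \ref{multihom}); hence $\phi^{-1}$ is right quasi-multiplicative with multiplicative part $\varphi^{-1}$ and scalar $\varphi^{-1}(\lambda)^{-1} \in F^\times$, again a bijection by Lemma \ref{scalarhom}.

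The left quasi-multiplicative case runs symmetrically, writing $\phi_i(\alpha) = \lambda_i \cdot \varphi_i(\alpha)$, replacing multiplication by $\lambda$ on the right with multiplication on the left, and invoking Lemma \ref{multihom} for $\varphi_2(F^\times) = F^\times$ where needed. The only subtlety to watch throughout is the bookkeeping of the direction of the twist when $\varphi_2$ is pushed past $\lambda_1$, and this is exactly the bookkeeping that Lemmas \ref{scalarhom} and \ref{multihom} are designed to control.
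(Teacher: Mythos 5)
Your proposal is correct and follows essentially the same route as the paper: both exhibit each family as a subgroup of the group of bijections of $F$ by checking identity, closure under composition, and closure under inverses, with the quasi-multiplicative case handled by the identical computation $(\phi_2 \circ \phi_1)(\alpha) = (\varphi_2 \circ \varphi_1)(\alpha)\cdot(\varphi_2(\lambda_1)\cdot\lambda_2)$ and the inverse $\varphi^{-1}(\cdot)\cdot\varphi^{-1}(\lambda^{-1})$, invoking Lemmas \ref{scalarhom} and \ref{multihom} at the same points. The only difference is that the paper writes out just the right quasi-multiplicative case and declares the rest similar, whereas you spell out the scalar, multiplicative, and $\mathbb{Z}$ cases explicitly.
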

\begin{proof} 
Let $(F, \cdot , 1, 0, -1) $ be a scalar monoid. We prove that each of these sets are subgroups of the group of the bijections of $F$. We prove the result for quasi-multiplicative endobijections. The others are obtained similarly.
 \begin{enumerate} 
 \item The identity map is the identity of each group, since the identity map is a right quasi-multiplicative endomap.
 \item The composite of right quasi-multiplicative endobijections is a right quasi-multiplicative endobijection. Let $\phi_1$ (resp. $\phi_2$) be a quasi-multiplicative endobijection associated with the multiplicative endobijection $\varphi_1$ (resp. $\varphi_2$)  and $\lambda_1 \in F^\times$ (resp. $\lambda_2 \in F^\times$) (see Lemma  \ref{scalarhom}). Let $\alpha \in F$. Then 
$$\begin{array}{lll} (\phi_1 \circ \phi_2) (\alpha ) &=& \phi_1 ( \phi_2 (\alpha ))= \phi_1 (\varphi_2 (\alpha ) \lambda_2)\\
&=& \varphi_1 ( \varphi_2 (\alpha ) \lambda_2)\lambda_1 = (\varphi_1 \circ \varphi_2) (\alpha) \varphi_1 (\lambda_2)  \lambda_1 \
\end{array}$$
where $\varphi_1 \circ \varphi_2$ is a multiplicative right endobijection and $\varphi_1 (\lambda_2) \lambda_1 \in F^\times$ (see Lemma  \ref{multihom}), proving that $\phi_1 \circ \phi_2$ is a right quasi-multiplicative endobijection. 
\item The inverse of a right quasi-multiplicative endobijection is a right quasi-multiplicative endobijection. Let $\phi$ be a quasi-multiplicative endobijection associated with the multiplicative endobijection $\varphi$ and $\lambda \in F^\times$ (see Lemma  \ref{scalarhom}), such that $\phi(\alpha ) =  \varphi (\alpha) \lambda$. Then it is clear that $\phi$ is invertible and its inverse is $\phi^{-1} ( \alpha) =\varphi^{-1} (\alpha  \lambda^{-1})= \varphi^{-1} (\alpha) \varphi^{-1} ( \lambda^{-1})$. It is the quasi-multiplicative endobijection associated with the multiplicative endobijection $\varphi^{-1}$ and $\varphi^{-1} ( \lambda^{-1})\in F^\times$ (see Lemma  \ref{multihom}).
\end{enumerate}
\end{proof}
When $K= \mathbb{R}$ and $\mathbb{C}$, we have already computed all the continuous multiplicative endobijections of $K$ in \cite{MarquesBoonz2022A}. We recall these results with the next Definition (Lemma). In the following, $\overline{u}$ will denote the complex conjugate of a complex number $u$, and when $u$ is a real number $\overline{u}$ is simply $u$. 

\begin{defin}(Lemma)\label{R} \cite[Theorem 3.12]{MarquesBoonz2022A} The continuous multiplicative endobijections of $\mathbb{C}$ are either of the form 
$$\begin{array}{cccl}  \epsilon_\alpha : &\mathbb{C} & \rightarrow & \mathbb{C} \\ 
& z=rs & \mapsto & r^\alpha s
\end{array} 
\text{ or }
\begin{array}{cccl}  \overline{\epsilon_\alpha} : & \mathbb{C}^*  & \rightarrow & \mathbb{C}^* \\ 
& z=rs & \mapsto &r^\alpha\overline{s}
\end{array} $$
where $r \in \mathbb{R}_{>0} $, $s \in \mathbb{S}$ and $\alpha \in \mathbb{C}\backslash i \mathbb{R}.$ 
Moreover, $\overline{\epsilon_1}$ is the complex multiplication, $ \overline{\epsilon_\alpha}=\epsilon_\alpha \circ \overline{\epsilon_1}$, $ \epsilon_\alpha=\overline{\epsilon_\alpha} \circ \overline{\epsilon_1}$, $\overline{\epsilon_1}\circ \epsilon_\alpha= \overline{ \epsilon_{\overline{\alpha}}}$ and $\overline{\epsilon_1}\circ \overline{\epsilon_\alpha}=  \epsilon_{\overline{\alpha}}$.  Finally, the inverse of $\epsilon_\alpha$ (resp.  $\overline{\epsilon_\alpha}$), $\epsilon_{\alpha}^{-1}$ (resp.  $\overline{\epsilon_\alpha}^{-1} $) is $\epsilon_{\frac{1- i \operatorname{Im}(\alpha)}{\operatorname{Re}(\alpha)} }$ (resp.  $\overline{\epsilon_{\frac{ 1+ i \operatorname{Im}(\alpha)}{\operatorname{Re}(\alpha)}  }}$) and therefore $\epsilon_\alpha$ is a homeomorphism.
We obtain all the continuous multiplicative endobijections of $\mathbb{R}$ by restricting $\epsilon_\alpha$ to $\mathbb{R}$ and taking $\alpha \in \mathbb{R}^* $. %\cap \mathbb{R}$. 
\end{defin}

We can also give a full description of all the endobijections of $\mathbb{Z}$ and 
 $\mathbb{Q}$. 
\begin{lemm}\label{multZ}
\begin{enumerate} 
\item The multiplicative endobijections of $\mathbb{Z}$ are defined by multiplicatively extending a bijection of $\mathbb{P}_{\mathbb{N}}$ onto $\mathcal{P}$ for some $\mathcal{P} \in  \mathscr{P}_{ \mathbb{Z}}$ and sending $0$ to $0$ and $\pm 1$ to $\pm 1$. 
\item The multiplicative endobijections of $\mathbb{Q}$ are defined by multiplicatively extending a bijection of $\mathbb{P}_{\mathbb{N}}$ onto $\mathcal{P}$ for some $\mathcal{P} \in  \mathscr{P}_{ \mathbb{Q}}$ and sending $0$ to $0$ and $\pm 1$ to $\pm 1$. We have $\varphi = (-)^a$ where $a \in\{ \pm 1\}$ are the only continuous multiplicative endobijections of $\mathbb{Q}^*$ for the topology on $\mathbb{Q}$ induced by the classical topology on $\mathbb{R}$. 
\end{enumerate}
\end{lemm}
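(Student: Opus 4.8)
The plan is to treat both parts in parallel, reducing each multiplicative endobijection to a combinatorial datum on the primes by means of unique factorization. First I would record the common starting point. If $\varphi$ is a multiplicative endobijection of $\mathbb{Z}$ (resp. of $\mathbb{Q}$), then by Lemma \ref{Zendo} it is a $\mathbb{Z}$-map, so $\varphi(0)=0$, $\varphi(-\alpha)=-\varphi(\alpha)$, and, since $\varphi$ is injective with $\varphi(1)=1$, also $\varphi(-1)=-1$. Hence $\varphi$ restricts to a bijective monoid endomorphism of $(\mathbb{Z}^*,\cdot)$ (resp. a group automorphism of $(\mathbb{Q}^*,\cdot)$), its inverse $\varphi^{-1}$ is again multiplicative, and $\varphi$ is completely determined by its values on the positive part together with the sign rule $\varphi(-\alpha)=-\varphi(\alpha)$.

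For part 1, I would exploit that $(\mathbb{Z}^*,\cdot)$ is, modulo its unit group $\{\pm 1\}$, free commutative on the primes, its atoms being exactly the signed primes $\{\pm p : p\in\mathbb{P}_{\mathbb{N}}\}$. Since both $\varphi$ and $\varphi^{-1}$ are multiplicative and fix the units $\{\pm 1\}$, the map $\varphi$ sends atoms to atoms: if $p$ is prime and $\varphi(p)=ab$, then $p=\varphi^{-1}(a)\varphi^{-1}(b)$ forces one factor to be a unit, so $\varphi(p)=\eta_p\,\pi(p)$ for some $\eta_p\in\{\pm 1\}$ and $\pi(p)\in\mathbb{P}_{\mathbb{N}}$. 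As $\varphi$ permutes the atoms and commutes with negation, $\pi$ is a permutation of $\mathbb{P}_{\mathbb{N}}$; thus the image $\{\varphi(p):p\in\mathbb{P}_{\mathbb{N}}\}$ is some $\mathcal{P}\in\mathscr{P}_{\mathbb{Z}}$ and $p\mapsto\varphi(p)$ is a bijection $\mathbb{P}_{\mathbb{N}}\to\mathcal{P}$. Unique factorization then gives $\varphi(\eta\prod_p p^{a_p})=\eta\prod_p(\eta_p\pi(p))^{a_p}$, and conversely any such permutation-with-signs extends by this formula to a genuine multiplicative endobijection, injectivity and surjectivity both following from uniqueness of factorization. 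This settles part 1.

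For part 2 the same reduction identifies the multiplicative endobijections of $\mathbb{Q}$ with the automorphisms of $\mathbb{Q}^*\cong\{\pm 1\}\times\bigoplus_p\mathbb{Z}$, and one checks directly (again by unique factorization, now with exponents in $\mathbb{Z}$) that every datum consisting of a permutation $\pi$ of $\mathbb{P}_{\mathbb{N}}$, signs $\eta_p\in\{\pm 1\}$ and exponents $\nu_p\in\{\pm 1\}$, with $p\mapsto\eta_p\,\pi(p)^{\nu_p}$ and image in $\mathscr{P}_{\mathbb{Q}}$, does extend to a multiplicative endobijection. The step I expect to be the main obstacle is the converse inclusion. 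Here the atom argument of part 1 is unavailable, because $\mathbb{Q}^*$ is a group and has no irreducibles, and the free abelian group $\bigoplus_p\mathbb{Z}$ admits automorphisms that are not signed permutations of the basis (for instance $2\mapsto 6$, $3\mapsto 3$, and $p\mapsto p$ for $p\ge 5$). This is exactly where I anticipate the continuity hypothesis must do the work.

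Accordingly, for the second assertion I would restrict $\varphi$ to the multiplicative topological group $\mathbb{Q}_{>0}$, use multiplicativity together with continuity to force a power law, and then read off from Definition (Lemma) \ref{R}, restricted to $\mathbb{Q}$, together with the requirement that $\varphi$ be a bijection of $\mathbb{Q}_{>0}$, that the only admissible exponent is $a\in\{\pm 1\}$; so $\varphi=(-)^a$, and combining with $\varphi(-1)=-1$ isolates exactly the identity and inversion. The delicate technical core of this last step is justifying the passage to a power map on the dense but non-complete group $\mathbb{Q}_{>0}$, which is where I would concentrate the careful estimates.
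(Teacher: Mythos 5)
Your treatment of part 1 is correct and is essentially the paper's own proof: the same contradiction via $\varphi^{-1}$ shows that primes map to elements of $\mathbb{P}_{\mathbb{Z}}$, and unique factorization gives both directions of the correspondence. You are in fact slightly more careful than the paper, which never explicitly checks that $p\mapsto|\varphi(p)|$ is a permutation of $\mathbb{P}_{\mathbb{N}}$, so that the image set really is an element of $\mathscr{P}_{\mathbb{Z}}$. Likewise, your verification that any bijection $\mathbb{P}_{\mathbb{N}}\to\mathcal{P}$ with $\mathcal{P}\in\mathscr{P}_{\mathbb{Q}}$ extends to a multiplicative endobijection of $\mathbb{Q}$ matches the paper's argument for that half of part 2.

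On the forward half of part 2 you diverge from the paper, and you are right to do so: the paper disposes of it with ``we obtain the result 2, similarly as in 1,'' but the part-1 argument is an irreducibility argument in the monoid $(\mathbb{Z}^*,\cdot)$, and in the group $(\mathbb{Q}^*,\cdot)$ there are no irreducible elements for it to apply to. Your counterexample is valid: the assignment $2\mapsto 6$, $3\mapsto 3$, $p\mapsto p$ for $p\geq 5$ extends (with $0\mapsto 0$ and $-x\mapsto-\varphi(x)$) to a multiplicative endobijection of $\mathbb{Q}$, whose inverse is the multiplicative extension of $2\mapsto 2/3$, $3\mapsto 3$, $p\mapsto p$; since $6$ is not of the form $\eta q^{\nu}$ with $q$ prime and $\eta,\nu\in\{\pm1\}$, this map does not arise from any $\mathcal{P}\in\mathscr{P}_{\mathbb{Q}}$. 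So the first sentence of part 2, read as a characterization of all multiplicative endobijections of $\mathbb{Q}$, is false, and your retreat to the continuous statement is not a shortcoming of your proposal but a correction of the lemma; only the ``such maps are endobijections'' direction survives without the continuity hypothesis.

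For the continuous statement your route coincides with the paper's (which defers to the classification of continuous multiplicative endobijections of $\mathbb{R}$ cited in Definition (Lemma) \ref{R}), but you locate the difficulty in the wrong place. The passage to a power law on the dense, non-complete group is routine: after taking logarithms, additivity upgrades continuity at one point to uniform continuity, since $|f(x)-f(y)|=|f(x-y)|$, so $f$ extends to the completion $\mathbb{R}$, where continuity plus additivity forces linearity. The genuinely delicate step is the one you treat as a read-off: showing that $x\mapsto x^{c}$ restricts to a bijection of $\mathbb{Q}_{>0}$ only when $c=\pm1$. Non-integer rational $c$ is excluded by unique factorization and integer $|c|\geq 2$ by failure of surjectivity, but excluding irrational $c$ requires showing that $2^{c}$, $3^{c}$, $5^{c}$ cannot all be rational, which needs a transcendence input such as the six exponentials theorem --- a point that neither your sketch nor the paper addresses.
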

\begin{proof} 
\begin{enumerate}
\item Suppose that $\varphi$ is a multiplicative endobijection of $\mathbb{Z}$. We have $\varphi(\pm 1) = \pm \varphi(1) $ and $\varphi (- \alpha)= - \varphi ( \alpha)$, by Lemma \ref{Zendo}. Let $p\in \mathbb{P}_{\mathbb{N}}$. Then $\varphi(p) \in \mathbb{P}_{\mathbb{Z}}$. Indeed, we prove the result by contradiction. Suppose that $\varphi(p) \notin \mathbb{P}_{\mathbb{Z}}$, then $\varphi (p)= a b$ such that $a, b \notin  \{ \pm1\}$. Applying $\varphi^{-1}$ to the equality we get $ p = \varphi^{-1} ( a b)= \varphi^{-1} ( a)  \varphi^{-1} ( b)$,
 with $\varphi^{-1} ( a) , \varphi^{-1} ( b)\notin  \{ \pm1\}$, contradicting that $p$ is prime. Now, given a bijection $\sigma : \mathbb{P}_{\mathbb{N}} \rightarrow \mathcal{P}$ for some $\mathcal{P} \in  \mathscr{P}_{ \mathbb{Z}}$, let $\eta \prod_{i=1}^s p_i^{e_i} $ be an arbitrary element of $\mathbb{Z}^*$ where $\eta \in \{ \pm 1\}$ and $e_i \in \mathbb{N}$, for all $i \in \{ 1, \cdots, s\}$. We then define the map $\varphi$ to be the map given by the rule $\varphi ( \eta \prod_{i=1}^s p_i^{e_i}) = \eta \prod_{i=1}^s \sigma (p_i)^{e_i}$. Since every set $\mathcal{P}$ in $ \mathscr{P}_{ \mathbb{Z}}$ is a generating set for $\mathbb{Z}^*$, considered as a monoid, and every element of $\mathbb{Z}^*$ can be uniquely written in terms this generating set, we have that $\varphi$ is a bijection and $\varphi$ is a multiplicative automorphism of $\mathbb{Z}^*$ by definition, proving the result. 
 
 \item Since every element $\mathcal{P}$ of  $ \mathscr{P}_{ \mathbb{Q}}$ is a generating set for $\mathbb{Q}^*$, considered as a group, and every element of $\mathbb{Q}^*$ can be uniquely written in terms this generating set, we obtain the result 2, similarly as in 1. The form of the continuous multiplicative automorphisms of $\mathbb{Q}^*$ can easily be deduced from the structure of the continuous automorphisms of $\mathbb{R}^*$ (see \cite[Theorem 3.7]{MarquesBoonz2022A}).

\end{enumerate} 
\end{proof} 

\section{Structures induced by bijective maps}\label{bijective_maps}
In this section, we exhibit the structure naturally induced by bijective maps. Let $(F_1 , \cdot_1, 1_1)$ and $(F_2 , \cdot_2, 1_2)$ two scalar monoids. 
By definition $\sigma :  (F_2, \cdot_2) \rightarrow (F_1, \cdot_1)  $ is a monoid isomorphism if $\alpha \cdot_2 \beta = \sigma^{-1} ( \sigma( \alpha) \cdot_1 \sigma( \beta ))$, for all $\alpha, \ \beta \in F$. According to this definition, we define a monoid operation induced by a bijection as follows.
 \begin{defin} 
Given $(F_1, \cdot,1)$, a monoid, $F_2$ a set such that there is a bijection $\sigma: F_2 \rightarrow F_1$. We define {\sf the monoid operation induced by $\sigma$ and $(F_1, \cdot)$ on $F_2$}, denoted $\cdot_\sigma$, to be the monoid operation sending $(\alpha , \beta)$ to $\alpha \cdot_\sigma \beta = \sigma^{-1} (\sigma(\alpha) \cdot \sigma ( \beta))$. We denote $\alpha_\sigma= \sigma^{-1} (\alpha)$ where $\alpha\in F_1$. The identity element for $\cdot_\sigma$ is $1_\sigma$. When $(F^{*}_1, \cdot, 1)$ is a group, then $(F^{*}_2, \cdot_\sigma, 1_\sigma)$ is a group and the inverse of $\alpha$ is the element that we denote $\alpha^{-_\sigma 1}$ such that $\sigma(\alpha^{-_\sigma 1}) =\sigma(\alpha)^{-1}$. When $(F_1, \cdot, 1, 0,-1)$ is a scalar group, then $(F_2, \cdot_\sigma ,1_\sigma, 0_\sigma, (-1)_\sigma)$ is a scalar group. We denote $-_{\sigma} 1 =(-1)_\sigma$. 
\end{defin}

According to Lemma \ref{+u}, we know that a near-vector space structure embeds different additive structures on the same scalar group turning the scalar group into a near-field. 
In this paper, we are trying to gain a better understanding of those possible additive structures that one can put on a given scalar monoid. The next definition translates the most natural way to induce a near-ring structure from a bijection.  

 \begin{defin}(Lemma)
 Let $(R_1, +, \cdot)$ be a left near-ring, $R_2$ be a set such that there exists $\sigma :R_2 \rightarrow R_1$ a bijection. 
\begin{enumerate}[noitemsep,topsep=-8pt, parsep=0pt,partopsep=0pt]
\item We say that $(R_2 , +_\sigma,  \cdot_\sigma )$ is {\sf the near-ring structure on $R_2$ induced by $\sigma$ and $(R_1, +, \cdot)$}. 
\item  %When $R:=R_1=R_2$ and 
If $\sigma : (R_2, + ) \rightarrow (R_1, +)$ is an additive isomorphism, we say that $\cdot_\sigma$ is {\sf the additive multiplication induced by $\sigma$ on the ring $R_2$.} We say that a multiplication $\odot$ is {\sf an additive multiplication on the ring $R_2$ induced by $(R_1, +, \cdot)$}, when there is an additive isomorphism $\sigma : (R_2, + ) \rightarrow (R_1, +)$ such that $\odot = \cdot_\sigma$. We note that when $R=R_2=R_1$, $\sigma: (R, \cdot)\rightarrow (R, \cdot)$ is a multiplicative bijection, we have $\cdot_\sigma= \cdot$. 
\item We say that $+_\sigma$ is {\sf the left quasi-multiplicative (resp. right quasi-multiplicative, resp. multiplicative, resp. left scalar, resp. right scalar, resp. $\mathbb{Z}$-) addition induced by $\sigma$ and $(R_1, +, \cdot)$ on $R_2$} if $\sigma$ is a left quasi-multiplicative (resp. right quasi-multiplicative, resp. multiplicative, resp. left scalar, resp. right scalar, resp. $\mathbb{Z}$-)bijection. We say that an addition $\boxplus$ is  {\sf a left quasi-multiplicative (resp. right quasi-multiplicative, resp. multiplicative, resp. left scalar, resp. right scalar, resp. $\mathbb{Z}$-) addition on $R_2$ induced by $(R_1, +, \cdot)$}, if there is a left quasi-multiplicative (resp. right quasi-multiplicative, resp. multiplicative, resp. left scalar, resp. right scalar, resp. $\mathbb{Z}$-) bijection $\sigma: R_2 \rightarrow R_1$ such that $\boxplus= +_\sigma$. We also note that when $R:=R_1=R_2$ and $\sigma$ is an additive automorphism of $(R, +)$, we have $+_\sigma = +$.  We note that when the multiplication is also distributive on the addition on the right and $\sigma$ is a left (resp. right) quasi-multiplicative bijection associated with a multiplicative bijection $\varphi$ and $\lambda \in R^\times$, we have $+_\sigma= +_\varphi$.
\item We say that $+_\sigma$ is a {\sf continuous addition} (resp. $\cdot_\sigma$ is a {\sf continuous multiplication}), when $R_1$ and $R_2$ are endowed with a topology such that the binary operation $+$ (resp. $\cdot$) is continuous with respect to this topology and $\sigma$ is also continuous with respect to these topologies. 
\end{enumerate}
\end{defin}
\begin{proof}
  We recall that $\alpha +_\sigma \beta=\sigma^{-1} (\sigma (\alpha ) +\sigma (\beta) )$ and $\alpha \cdot_\sigma \beta= \sigma^{-1} (\sigma (\alpha ) \cdot \sigma (\beta) )$, where $\alpha$, $\beta \in R_2$. Moreover, $+_\sigma$ defines an abelian group operation on $R_2$. The zero for $+_\sigma$ is the element  $0_\sigma$ and the inverse of an element $\alpha$ in $R_2$ is an element $-_\sigma  \alpha$ such that $\sigma (-_\sigma  \alpha )= -\sigma(\alpha)$. $\cdot_\sigma$ defines a group operation on $F\backslash \{ 0_\sigma \}$. The identity is $1_\sigma$ with respect to $\cdot_\sigma$ and the inverse of $\alpha$ is the element $\alpha^{-_\sigma 1}$ such that $\sigma(\alpha^{-_\sigma 1}) =\sigma(\alpha)^{-1}$ where $\alpha$ is in $R_2$. Finally, let $\alpha, \beta, \gamma \in R_2$, we have 
$$ \begin{array}{ccl} \gamma \cdot_\sigma ( \alpha +_\sigma \beta) &=& 
\sigma^{-1} ( \sigma ( \gamma) \cdot ( \sigma (\alpha ) +\sigma (\beta) ) ) \\ 
&=& \sigma^{-1} ( \sigma ( \gamma) \cdot  \sigma (\alpha ) +\sigma ( \gamma) \cdot \sigma (\beta) ) \\
&=&  \gamma \cdot_\sigma \alpha +_\sigma \gamma \cdot_\sigma \beta .
\end{array} $$  
When $\sigma$ is a multiplicative bijection, we have $\sigma^{-1} ( \sigma(\alpha) \cdot \sigma (\beta)) =  \sigma^{-1} ( \sigma(\alpha \cdot \beta)) = \alpha \cdot \beta$, for all $\alpha, \beta \in R_2$. When $\sigma$ is an additive automorphism, we have $\sigma^{-1} ( \sigma(\alpha) + \sigma (\beta)) =  \sigma^{-1} ( \sigma(\alpha + \beta)) = \alpha  + \beta$, for all $\alpha, \beta \in R_2$. When the multiplication is distributive on the right and $\sigma$ is a quasi-multiplicative bijection associated with a multiplicative bijection $\varphi$ of $R_2$ and $\lambda \in R_1^\times$, for any $\alpha, \beta  \in R_2$,
$$\varphi^{-1} ( (\varphi (\alpha) \cdot \lambda +  \varphi (\beta) \cdot \lambda) \cdot \lambda^{-1}) = \varphi^{-1} ( \varphi (\alpha)  +  \varphi (\beta) )=\alpha +_\varphi \beta. $$
  \end{proof}
Here is an interesting example to illustrate the above. 
 \begin{exam} 
 Let $\mathbb{Q}(\alpha)$ be a number field where $\alpha$ is a primitive element over $\mathbb{Q}$ of degree $n$.
There exists a bijection $\sigma$ from any number field $\mathbb{Q}$ to $\mathbb{Q}(\alpha)$. We can take a bijection from $\mathbb{Q}$ to $\mathbb{N}$ and the Cantor tuple function to construct that bijection identifying $\mathbb{Q}(\alpha)$ with $\mathbb{Q}^n$. Then we endow $\mathbb{Q}$ with the operations $x +_\sigma
 y= \sigma^{-1} (\sigma ( x) +_\alpha \sigma (y) )$ and $x \cdot_\sigma y =\sigma^{-1} ( \sigma (x) \cdot_\alpha \sigma (y))$ where $x, y \in \mathbb{Q}$, $+_\alpha$ and $\cdot_\alpha$ are, respectively, the addition and the multiplication in $\mathbb{Q} (\alpha)$. $(\mathbb{Q}, +_\sigma, \cdot_\sigma)$ is a field isomorphic to $(\mathbb{Q} (\alpha), +_\alpha , \cdot_\alpha )$. 
 \end{exam} 

The following result is not hard to prove it describes near-ring isomorphisms in terms of the notions introduced in the section. 
 \begin{lemm}\label{ringisom}
Let $(R_1, +_1, \cdot_1)$,  $(R_2, +_2, \cdot_2)$ be two left near-rings. The following assertions are equivalent.
\begin{enumerate}
\item $\varphi$ is a near-field isomorphism between $ (R_2, +_2, \cdot_2)$ and  $(R_1, +_1, \cdot_1)$ 
\item $\varphi:  (R_2, \cdot_2) \rightarrow (R_1, \cdot_1)$ is a multiplicative bijection and $\alpha +_2 \beta=\alpha {+_1}_\varphi \beta $, for all $\alpha , \beta \in R_2$. 
\item $\varphi:  (R_2, +_2) \rightarrow (R_1, +_1)$ is a additive bijection and $\alpha \cdot_2 \beta=\alpha {\cdot_1}_\varphi \beta $, for all $\alpha , \beta \in R_2$. 
\end{enumerate} 
\end{lemm}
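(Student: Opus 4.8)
The plan is to prove the cycle of implications $1 \Rightarrow 2 \Rightarrow 1$ and $1 \Rightarrow 3 \Rightarrow 1$, since each of the equivalences with statement~1 is essentially an unravelling of what the induced operation $+_\varphi$ (respectively $\cdot_\varphi$) means together with the definition of a near-field isomorphism. Recall from the induced-structure definition that $\alpha\, {+_1}_\varphi\, \beta = \varphi^{-1}(\varphi(\alpha) +_1 \varphi(\beta))$ and $\alpha\, {\cdot_1}_\varphi\, \beta = \varphi^{-1}(\varphi(\alpha) \cdot_1 \varphi(\beta))$, viewing $\varphi : R_2 \to R_1$ as the relevant bijection. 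The whole lemma is then a translation exercise between ``$\varphi$ respects the operation'' and ``the operation on $R_2$ equals the one pulled back through $\varphi$ from $R_1$.''

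First I would treat $1 \Leftrightarrow 2$. Assume~1, so $\varphi$ is a near-field isomorphism; in particular $\varphi$ is a multiplicative bijection, giving the first half of~2. For the additive identity, since $\varphi$ is additive we have $\varphi(\alpha +_2 \beta) = \varphi(\alpha) +_1 \varphi(\beta)$, and applying $\varphi^{-1}$ yields $\alpha +_2 \beta = \varphi^{-1}(\varphi(\alpha) +_1 \varphi(\beta)) = \alpha\, {+_1}_\varphi\, \beta$. Conversely, assume~2. Multiplicativity of $\varphi$ is given, so it remains to check $\varphi$ is additive, i.e. $\varphi(\alpha +_2 \beta) = \varphi(\alpha) +_1 \varphi(\beta)$. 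But the hypothesis $\alpha +_2 \beta = \alpha\, {+_1}_\varphi\, \beta = \varphi^{-1}(\varphi(\alpha) +_1 \varphi(\beta))$ gives exactly this after applying $\varphi$. Since $\varphi$ is a bijection that is both multiplicative and additive, it is a near-field isomorphism, establishing~1.

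The implication $1 \Leftrightarrow 3$ is the mirror image: starting from a near-field isomorphism $\varphi$, its additivity is recorded in~3, and the multiplicative condition $\alpha \cdot_2 \beta = \alpha\, {\cdot_1}_\varphi\, \beta$ is obtained by applying $\varphi^{-1}$ to $\varphi(\alpha \cdot_2 \beta) = \varphi(\alpha) \cdot_1 \varphi(\beta)$; conversely, an additive bijection satisfying the pulled-back multiplication identity is automatically multiplicative, hence a near-field isomorphism. The only subtlety worth flagging is the asymmetry between the two characterizations: in statement~2 one assumes $\varphi$ is \emph{multiplicative} and derives additivity, while in statement~3 one assumes $\varphi$ is \emph{additive} and derives multiplicativity. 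One should confirm that ``near-field isomorphism'' in the sense used here means a bijection preserving both operations, so that no extra structural axiom (associativity, the distributive law, or the behaviour of $0$ and $\pm 1$) needs separate verification; these are inherited automatically because the induced operations $+_\varphi$ and $\cdot_\varphi$ transport the near-field axioms of $R_1$ to $R_2$ through the bijection. I expect the main (though still mild) obstacle to be purely bookkeeping: making sure the direction of the bijection $\varphi : R_2 \to R_1$ is consistent with the convention in the induced-structure definition, so that $+_\varphi$ and $\cdot_\varphi$ are pulled back the correct way and no spurious inverse is introduced.
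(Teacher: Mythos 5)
Your proposal is correct and is essentially the same argument as the paper's: both proofs are pure definition-unwinding, applying $\varphi$ or $\varphi^{-1}$ to the identity $\alpha \ast_2 \beta = \varphi^{-1}(\varphi(\alpha) \ast_1 \varphi(\beta))$ to pass between ``the operation on $R_2$ is the pullback'' and ``$\varphi$ preserves the operation.'' The only difference is organizational --- the paper proves the cycle $1 \Rightarrow 2 \Rightarrow 3 \Rightarrow 1$ while you prove the two equivalences $1 \Leftrightarrow 2$ and $1 \Leftrightarrow 3$ --- but the individual computations are identical, so this is not a substantively different route.
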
 
\begin{proof} 
$1. \Rightarrow 2.$ Suppose that $\varphi$ is a near-ring isomorphism between $(R_2, +_2, \cdot_2)$ and $(R_1, +_1, \cdot_1)$. By definition of a near-ring homomorphism, we have that $\varphi: (R_2,  \cdot_2) \rightarrow (R_1,  \cdot_1)$ is a multiplicative bijection. Since $\varphi$ is in particular an additive morphism we have $\varphi (\alpha +_2 \beta)= \varphi (\alpha) +_1 \varphi ( \beta)$, for all $\alpha , \beta\in R_2$. Therefore, $\alpha +_2  \beta = \varphi^{-1} ( \varphi (\alpha) +_1 \varphi ( \beta))$, for all $\alpha , \beta\in R_2$.

$2. \Rightarrow 3.$ Suppose $\varphi: (R_2,  \cdot_2) \rightarrow (R_1, \cdot_1)$ is a multiplicative bijection and $\alpha+_2 \beta=\alpha {+_1}_\varphi \beta $, for all $\alpha , \beta \in R_2$. Then $\varphi$ is bijective and $\varphi (\alpha +_2 \beta) = \varphi ( \alpha {+_1}_\varphi \beta )= \varphi (\alpha) +_1 \varphi ( \beta)$, for all $\alpha , \beta\in R_2$. Therefore, $\varphi$ is an additive bijection between $ (R_2, +_2) $ and  $ (R_1, +_1) $ and $\varphi ( \alpha \cdot_2 \beta ) = \varphi (\alpha) \cdot_1 \varphi (\beta)$, for all $\alpha , \beta\in R_2$. That is $ \alpha \cdot_2 \beta =  \alpha {\cdot_1}_\varphi \beta$, for all $\alpha , \beta\in R_2$.

$3. \Rightarrow 1.$ Suppose $\varphi:  (R_2, +_2) \rightarrow (R_1, +_1)$ is a additive bijection and $\alpha \cdot_2 \beta=\alpha {\cdot_1}_\varphi \beta $, for all $\alpha , \beta \in R_2$.  Then $\varphi$ is bijective  and  $\varphi (\alpha +_2 \beta) = \varphi (\alpha) +_1 \varphi ( \beta)$ and $\varphi ( \alpha \cdot_2 \beta ) = \varphi (\alpha) \cdot_1 \varphi (\beta)$, for all $\alpha , \beta\in R_2$. That is $\varphi$ is a near-ring isomorphism.
\end{proof} 
 
%The following lemma asserts a correspondence between the additive and the multiplicative structure. When we fix the addition, the multiplicative structure is given by an additive automorphism. 
%\begin{lemm}\label{multstruc}
%Let $(R, +, \cdot)$ be a near-ring. $\Psi$ is a field isomorphism from $( R, +, \smallstar)$ to $(R, +, \cdot)$ if and only if $\smallstar= \cdot_{ \Psi}$ where $\Psi$ is an additive automorphism from $(R, +)$.  
%\end{lemm} 
%\begin{proof}
%Suppose that $\Psi$ is a near-ring isomorphism between $(R, + , \smallstar)$ and $(R, + , \cdot)$. By definition of a near-ring homomorphism, we have that $\Psi$ is in particular an additive morphism and we have $\Psi (\alpha +  \beta)= \Psi(\alpha) + \Psi ( \beta)$ and $\alpha \smallstar  \beta = \Psi^{-1} ( \Psi (\alpha) \Psi ( \beta))$. Conversely, suppose  $\smallstar= \cdot_{ \Psi}$ where $\Psi$ is an additive automorphism from $(R, +)$. Clearly, $\Psi$ is a field isomorphism.
%\end{proof}

\section{Elementary near-vector spaces}\label{elem_nvs}
We start this section with a discussion about ``elementary module structures'' leading to multiple additions on a given ring. This discussion gives a good insight into the theory of near-vector spaces, and also serves as a motivation for why the question of looking at the different additions on a given multiplicative set is worth considering. At the end of the section, we investigate the relationship between additions on a scalar group and elementary near-vector spaces. Exploring the notion of elementary near-vector spaces, we find that any quasi-multiplicative bijection defines a new addition on a near-field.

We first observe the definition of left modules over a ring closely. The classical definition of left modules is as follows: a left module over a ring is a pair $(M, R)$ where $R$ is a ring, $M$ is a set endowed with an abelian group structure $\boxplus$ and $\boxdot$ an action of the ring $(R,+, \cdot)$ on $(M,\boxplus)$ by additive endomorphisms on the left. The goal of the discussion that follows is to consider a pair of type $(R, R)$ such that $R$ is a ``elementary module'' over $R$ but we remove all the properties not used in the definition of a module. We start by observing that in order for such a structure to exist we might not have to have $\boxplus= +$ or $\boxdot= \cdot$. Moreover, $\boxdot$ distributes on the left over the addition $\boxplus$. We finally need that the operations $\boxdot$  and $\cdot$, $\boxplus$ and $+$ to be compatible: by this we mean $(\alpha \cdot \beta ) \boxdot \gamma = \alpha \boxdot ( \beta  \boxdot \gamma)$ and $(\alpha + \beta ) \boxdot \gamma =( \alpha \boxdot  \beta) \boxplus (\alpha  \boxdot \gamma)$, for all $\alpha , \beta , \gamma \in R$. We could define the mathematical object such that $(R,R)$ is a module without the unused axioms in the definition as follows. 

A {\sf left modnear-ring}, denoted $((R ,\boxplus , \boxdot) , (R ,+, \cdot) ) $, is a set $R$ with four binary operations on $R$ that we denote $\boxplus$, $+$, $\boxdot$ and $\cdot$ such that for all $\alpha , \beta ,\gamma \in R$,
\begin{enumerate}[noitemsep,topsep=0pt]
\item $(R, \boxplus)$ and $(R, +)$ are groups;
\item $(R, \cdot, 1)$ is a monoid;
\item $(\alpha + \beta) \boxdot \gamma= \alpha \boxdot \gamma \boxplus \beta \boxdot \gamma$;
\item  $(\alpha \cdot \beta)\boxdot \gamma = \alpha \boxdot ( \beta \boxdot \gamma) $; In other words, $(R, \cdot)$ acts on $R$ on the left via $\boxdot$;
\item The map $\mu_{1}^{L}:R \rightarrow R$ that sends $\alpha$ to $\alpha \boxdot 1$ is a bijection;
\item  $ \gamma \boxdot ( \alpha\boxplus \beta ) = \gamma \boxdot \alpha \boxplus  \gamma \boxdot \beta$.
\end{enumerate}
We say that the left modnear-ring is {\sf additively abelian on the right (resp. on the left)} if $( R, +)$ is an abelian group (resp. $(R, \boxplus)$ is an abelian group).
We say that the left modnear-ring is {\sf multiplicatively abelian} if $( R, \cdot)$ is an abelian monoid.

We define a {\sf right modnear-ring} in the same way as a left modnear-ring except $3.$, $4.$, $5.$ and $6.$ now become 3.' $  \gamma \boxdot (\alpha + \beta)= \gamma \boxdot \alpha \boxplus \gamma \boxdot \beta$; 4.' $ \gamma \boxdot (\alpha \cdot \beta) = \gamma \boxdot ( \alpha \boxdot \beta)$; 5.' $, $the map $\mu_{1}^{R}: R \rightarrow R$ sending $\alpha$ to $1 \boxdot \alpha$ is a bijection and 6.' $   ( \alpha\boxplus \beta ) \boxdot\gamma  = (\alpha \boxdot \gamma) \boxplus  ( \beta \boxdot  \gamma)$.

If $R$ is a left modnear-ring, we note that $(R, +, \cdot )$ is a right near-ring. Indeed, by $1.$ in the definition, we have that $(R, +)$ is a group, by $2.$ the multiplication is associative and by $3.$ and $5.$, we have for all $\alpha, \beta, \gamma\in F$.
$$\begin{array}{lll}  ((\alpha + \beta ) \cdot \gamma) \boxdot 1 &=&  (\alpha + \beta ) \boxdot ( \gamma \boxdot 1) =  \alpha \boxdot ( \gamma \boxdot 1)  + \beta \boxdot ( \gamma \boxdot 1)\\
& =& (\alpha \cdot \gamma ) \boxdot 1 +(\beta \cdot \gamma ) \boxdot 1 = (\alpha \cdot \gamma + \beta \cdot \gamma) \boxdot 1.  \end{array}$$ Since $\mu_{1}^{L}$ is a bijection, $(\alpha + \beta ) \cdot \gamma =  \alpha \cdot \gamma + \beta \cdot \gamma$.

Then a left modnear-ring is simply a right near-ring $(R, +, \cdot)$ with a left action $\boxdot$ on the right near-ring $(R, +, \cdot)$ compatible with the operation $\boxplus$.

If $R$ is left modnear-ring, we also note that for any $\alpha \in R$, $0 \boxdot  \alpha =0$. Indeed, $0  \boxdot \alpha = (0 + 0 ) \boxdot \alpha = 0  \boxdot \alpha \boxplus  0  \boxdot \alpha$. Thus, $0  \boxdot \alpha = 0$, adding $\boxminus  (0  \boxdot \alpha)$ both sides of the equality. Also, $-1 \boxdot  \alpha =-\alpha$. Indeed, $0  \boxdot \alpha = (1 + (-1) ) \boxdot \alpha = 1  \boxdot \alpha \boxplus  (-1)  \boxdot \alpha=\alpha \boxplus  (-1)  \boxdot \alpha$. Thus, we obtain $-1  \boxdot \alpha = -\alpha$.

The motivation for the above discussion is that the near-vector space construction resembles this definition. Only the group operation $+$ is not given, although for each vector $\gamma$, an addition $+_\gamma$ arises from the definition. But those additions are not uniform across the vector space as we will explain in the following.   

Another point to note is that a left near-ring does not necessarily define a left modnear-ring. We give some examples of modnear-rings. We start with a first example.\\
 Let $(R, +, \cdot)$ be a ring and $\varphi$ is a multiplicative automorphism.% We can construct a $\mathcal{M}$-set structure on $R$ as follows. 
 We have $( ( R, +, \cdot^\varphi) , ( R, +_\varphi, \cdot))$ is a left modnear-ring where $\alpha  +_\varphi \beta = \varphi^{-1} ( \varphi (\alpha ) + \varphi (\beta))$ and $\alpha \cdot^\varphi \beta = \varphi (\alpha ) \cdot \beta$, for $\alpha, \beta \in R$. Indeed, since $(R, +)$ is an abelian group $(R, +_{\varphi})$ is also an abelian group (\cite[Lemma 4.1]{MarquesBoonz2022A}). Moreover, since $(R, +, \cdot)$ is a ring, we have, for any $\alpha, \beta$ and $\gamma\in R$, 
\bea
(\alpha +_{\varphi} \beta)\cdot^{\varphi} \gamma &=& (\varphi^{-1}(\varphi(\alpha) + \varphi(\beta)))\cdot^{\varphi} \gamma = \varphi(\varphi^{-1}(\varphi(\alpha) + \varphi(\beta))) \cdot \gamma \\
&=& (\varphi(\alpha) + \varphi(\beta)) \cdot \gamma = \varphi(\alpha) \cdot \gamma + \varphi(\beta) \cdot \gamma = \alpha \cdot^{\varphi} \gamma + \beta \cdot^{\varphi} \gamma;
\eea
\bea
(\alpha \cdot \beta)\cdot^{\varphi} \gamma &=& \varphi(\alpha \cdot \beta) \cdot \gamma = \varphi(\alpha) \cdot \varphi(\beta) \gamma = \alpha \cdot^{\varphi} (\varphi(\beta) \cdot \gamma) = \alpha \cdot^{\varphi} (\beta \cdot^{\varphi} \gamma);
\eea
and
\bea
\gamma \cdot^{\varphi} (\alpha + \beta) &=& \varphi(\gamma)\cdot (\alpha + \beta) = \varphi(\gamma) \cdot \alpha + \varphi(\gamma) \cdot \beta = \gamma \cdot^{\varphi} \alpha + \gamma \cdot^{\varphi} \beta.
\eea

Another example is given as follows. 
 Let $(G, +)$ and $(G, \boxplus)$ be two (abelian) groups.  We define $\mathcal{M}(G, +, \boxplus)$ to be the set of group homomorphism from $( G, +)$ to $( G, \boxplus)$,  $$\mathcal{M}(G, +, \boxplus): =Hom_{Gr} (( G, +),( G, \boxplus) ).$$ For every $f, g \in \mathcal{M}(G, +, \boxplus)$, we define $f + g$ to be a map such that $f(x) +g(x)$  and $f \boxplus g$ to be a map such that $f(x) \boxplus g(x)$, for any $x \in G$. We have $$(( \mathcal{M}(G, +, \boxplus) , \boxplus, \circ) , ( \mathcal{M}(G, +, \boxplus) , +, \circ))$$ is a right modnear-ring. Indeed, we have $( G, +)$ and $( G, \boxplus)$ are (abelian) groups. The operation $\circ$ is associative. Let $f$, $g, \ h \in \mathcal{M}(G, +, \boxplus)$.  We have $(f \boxplus g )\circ h = f \circ h \boxplus g \circ h$ and $h \circ (f + g )= f \circ h \boxplus g \circ h$. (We note that we also have $(f + g )\circ h = f \circ h + g \circ h$.) For instance, we could consider $G=\mathbb{F}_9$, $(\mathbb{F}_9, +, \cdot)$ to be a finite field with $9$ elements, the operation $+_3$ defined as $ (\alpha^3 + \beta^3 )^{1/3}$ for any $\alpha , \beta \in \mathbb{F}_9$ and $\mathcal{M}(\mathbb{F}_9,+, +_3)$ as a specific example of the above. Considering a set with different additions on it will inform us more about that set than when considering a single addition. We could even think of equipping $R$ with all  the possible additions that would turn it into a near-ring, given a certain multiplication. That is what near-vector spaces achieve for fields.   

Continuing with the same line of thought, we want to find a good definition for an ``elementary near-vector space'' over a certain scalar group $(F, \cdot)$. 
We would then want to consider a near-vector space of the form $((F, \boxplus, \boxdot), (F, \cdot))$ such that $F\boxdot 1= F$. This last condition guarantees that $F$ has dimension $1$ over $(F, \cdot)$ and thus stays elementary. We will see further in the discussion that choosing $1$ to have this property is not restrictive.

In this generality, the definition of a left near-vector space, when it is of the form $((F, \boxplus , \boxdot), (F, \cdot))$ such that $F\boxdot 1= F$, becomes 
\begin{enumerate}[noitemsep,topsep=0pt]
\item $(F, \cdot)$ is a scalar group;
\item $(F, \boxplus)$ is an abelian group;
\item we have a scalar group action of $(F, \cdot)$ on $(F, \boxplus)$ via $\boxdot$. That is, for all $\alpha, \beta, \gamma \in F$, $1 \boxdot \gamma = \gamma$, $(-1) \boxdot \gamma = -\gamma$, $0 \boxdot \gamma = 0$, $(\gamma \cdot \alpha ) \boxdot \beta = \gamma \boxdot( \alpha  \boxdot \beta )$ and $\gamma \boxdot ( \alpha  \boxplus \beta)= \gamma \boxdot  \alpha  \boxplus \gamma \boxdot \beta$;  %$\alpha \boxdot\gamma = \beta \boxdot\gamma \Rightarrow \alpha = \beta$, 
\item for all $\alpha, \gamma \in F$, $\alpha \boxdot \gamma =  \gamma$ implies $\alpha = 1$;
\item for all $\alpha , \beta \in F$, there is a unique $\alpha \boxplus_1 \beta \in F$ such that $(\alpha \boxplus_1 \beta) \boxdot 1 = \alpha  \boxdot 1 \boxplus \beta  \boxdot 1$. 
\end{enumerate}
From these axioms, we can deduce the other axioms of the near-vector space definition. Indeed, $4.$ is equivalent to the point-free property. Indeed, if we suppose that $4.$ is satisfied and for all $\alpha, \beta , \gamma \in F$, $\alpha \boxdot \gamma = \beta \boxdot \gamma $, then
$\beta^{-1} \boxdot ( \alpha \boxdot \gamma) = \beta^{-1} \boxdot (  \beta \boxdot \gamma )$. By $3.$, that is equivalent to $$(\beta^{-1}\cdot \alpha) \boxdot \gamma = (\beta^{-1} \cdot   \beta) \boxdot \gamma = 1 \boxdot \gamma =\gamma.$$ Finally, we obtain $ \beta^{-1}\cdot \alpha=1$. That is, $\beta = \alpha$ and we have proven the point free property. The converse is obtained similarly. 
By \cite[Theorem 2.4]{Andre} or \cite[Theorem 2.2.6]{DeBruyn}, we know that $(F, \boxplus_1 , \cdot)$ is a left near-field.

 We also note that the following statements are equivalent:
\begin{enumerate}[noitemsep,topsep=0pt]
\item $F\boxdot 1=F$; 
\item $F\boxdot \gamma =F$, for some $\gamma \in F^*$; 
\item $F\boxdot \gamma =F$, for all $\gamma \in F^*$.
\end{enumerate}
This is a consequence of property $3.$ Indeed, $1. \Rightarrow 2.$ suppose $F\boxdot 1=F$. Let $\gamma \in F^*$. %There is $\alpha \in F$ such that $\alpha \boxdot 1 =\gamma$. We have $\alpha \neq 0$, by property $4.$ 
We have $(F \boxdot \gamma) \boxdot 1= (F \cdot \gamma) \boxdot 1= F \boxdot 1$. This implies $F \boxdot \gamma = F$ by the point-free property.
Now, for $2. \Rightarrow 3.$, suppose $F\boxdot \gamma =F$, for some $\gamma \in F^*$. Let $\alpha \in F^*$. We have $(F \boxdot \alpha) \boxdot \gamma= (F \cdot \alpha) \boxdot \gamma= F \boxdot \gamma$.  This implies $F \boxdot \alpha = F$ by the point-free property. Finally, $3. \Rightarrow 1.$ is trivial.\\
This guarantees that choosing $1$ for the equality $F\boxdot 1=F$ to be satisfied leads to no loss of generality. 

We can see that the property $F\boxdot 1=F$ and the point free property are equivalent to saying that the canonical map from $F$ to $F\boxdot 1$ sending $\alpha$ to $\alpha \boxdot 1$ is bijective. 

We note that given a left near-vector space of the form $((F, \boxplus , \boxdot), (F, \cdot))$ such that $F\boxdot 1= F$ satisfying the properties $1.$ to $5.$,  $((F, \boxplus , \boxdot), (F, \boxplus_1, \cdot))$ is very close to a modnear-ring. The difference is that each $\gamma$ on the right could define different additions. More precisely, $Q(F)=F$ and the addition $+_{\gamma \boxdot 1} $ is given by $\alpha +_{\gamma  \boxdot 1} \beta  = (\alpha \cdot \gamma +_1 \beta  \cdot \gamma ) \cdot  \gamma^{-1} $, for all $\alpha , \beta\in F$.  Indeed, we have 
$$\begin{array}{lll} \alpha \boxdot ( \gamma\boxdot 1)  +  \beta \boxdot ( \gamma \boxdot 1 ) &=&  (\alpha \cdot  \gamma)\boxdot 1  +  (\beta \cdot  \gamma) \boxdot 1 =  (\alpha \cdot  \gamma +_1  \beta  \cdot  \gamma ) \boxdot 1 \\
&=& ((\alpha  \cdot  \gamma +_1  \beta \cdot  \gamma ) \cdot  \gamma^{-1} \cdot  \gamma)  \boxdot 1\\ 
&=&  (\alpha  \cdot  \gamma +_1  \beta  \cdot  \gamma )  \cdot \gamma^{-1} \boxdot (\gamma  \boxdot 1) = ( \alpha +_{\gamma  \boxdot 1} \beta) \boxdot (\gamma  \boxdot 1).\end{array}$$ 
However, when $F$ is distributive on the right with respect to $\cdot$ onto $+_1$, then \\
$((F, \boxplus , \boxdot), (F, \boxplus_1, \cdot))$ is simply a modnear-ring.

We can associate the left near-vector space $(( F, \boxplus_1 , \cdot) , ( F, \cdot))$ with the left near-field $(F, \boxplus_1 , \cdot)$. It is relatively easy to see that $(( F, \boxplus_1 , \cdot) , ( F, \cdot))$ is a left near-vector space. In this case, the scalar group action on $(F,\boxplus_{1})$ is 
simply via $\cdot$. The properties $1 \alpha = \alpha, (-1)\alpha = -\alpha, 0  \alpha = 0$ and $(\gamma \alpha) \beta = \gamma (\alpha \beta)$, for all $\alpha, \beta, \gamma \in F$, follow immediately, as does the fixed point property. We only need to show that $\gamma (\alpha \boxplus_{1} \beta) = \gamma \alpha \boxplus_{1} \gamma \beta$, for all $\alpha, \beta, \gamma \in F$. However, this follows from the fact that $(F, \boxplus_1 , \cdot)$ is a left near-field. 
Moreover, ${\boxplus_1}_{\gamma}= \boxplus_{\gamma \boxdot 1}$, for all $\gamma \in F^*$. We have a natural isomorphism $(\Upsilon , id)$ as left near-vector spaces from $(( F, \boxplus_1 , \cdot) , ( F, \cdot))$ to $((F, \boxplus, \boxdot), (F, \cdot))$ where $\Upsilon : (F, \boxplus_1)  \rightarrow (F, \boxplus)$ sends $ \alpha$ to $\alpha \boxdot 1$. By definition of $\boxplus_1$, we have for all $\alpha, \beta\in F$, 
$$\Upsilon ( \alpha \boxplus_1 \beta )  =( \alpha \boxplus_1 \beta ) \boxdot 1= \alpha \boxdot 1 \boxplus \beta \boxdot 1 = \Upsilon (\alpha ) \boxplus \Upsilon (\beta )$$ 
and
$$ \Upsilon (\alpha \cdot \beta ) = ( \alpha \cdot \beta )\boxdot 1 = \alpha \boxdot ( \beta \boxdot 1)= \alpha \boxdot \Upsilon (\beta).$$

From the discussion above we have seen how to associate a near-vector space with a near-field and vice versa. This leads to the following definition.
\begin{defin} \begin{enumerate}
\item Given $((F, \boxplus, \boxdot), (F, \cdot))$ a left near-vector space such that $F\boxdot 1= F$.  
We refer to $(F, \boxplus_\gamma, \cdot)$ as the left near-field associated with $((F, \boxplus , \boxdot), (F, \cdot))$ at $\gamma$, where $\gamma \in F^*$. We have  $\alpha \boxplus_{\gamma  \boxdot 1} \beta  = (\alpha \cdot \gamma \boxplus_1 \beta \cdot \gamma ) \cdot \gamma^{-1} $, for all $\alpha , \beta\in F$ and $\gamma \in F^*$. 
\item Given $(F, +, \cdot)$ is a near-field. We refer to $((F, +, \cdot), (F, \cdot))$ as the canonical near-vector space associated with $(F, +,\cdot)$. 
\end{enumerate}
\end{defin} 
From the above discussion, we obtain the following Lemma. 
\begin{lemm}
Given $((F, \boxplus , \boxdot), (F, \cdot))$, a near-vector space such that $F\boxdot 1= F$. We have $(F, \boxplus_\gamma, \cdot)$ is a near-field and $((F, \boxplus, \boxdot), (F, \cdot))$ is isomorphic to $(( F, \boxplus_\gamma , \cdot) , ( F, \cdot))$ as near-vector spaces, for all $\gamma \in F^*$. 
\end{lemm}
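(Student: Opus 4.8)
===PROOF PLAN===

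The plan is to verify the two claims in the Lemma by directly invoking the structural work already assembled in the preceding discussion, since nearly all the required computations have been carried out there. First I would establish that $(F, \boxplus_\gamma, \cdot)$ is a near-field. For $\gamma = 1$ this is exactly the statement that $(F, \boxplus_1, \cdot)$ is a left near-field, which the excerpt obtained from \cite[Theorem 2.4]{Andre} (or \cite[Theorem 2.2.6]{DeBruyn}) via axioms $1.$--$5.$. For general $\gamma \in F^*$, the defining relation
\[
\alpha \boxplus_{\gamma \boxdot 1} \beta = (\alpha \cdot \gamma \boxplus_1 \beta \cdot \gamma) \cdot \gamma^{-1}
\]
exhibits $\boxplus_\gamma$ as the addition obtained by transporting $\boxplus_1$ through the right scalar bijection $\alpha \mapsto \alpha \cdot \gamma$. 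I would note that this map is a bijection of $F$ (since $\gamma \in F^\times$, by Lemma \ref{scalarhom} applied to the scalar endomap associated with $\gamma$), so by the general principle recorded in the Definition(Lemma) on bijection-induced structures, $(F, \boxplus_\gamma, \cdot)$ inherits a left near-field structure isomorphic to $(F, \boxplus_1, \cdot)$. Equivalently, one checks abelian-group axioms for $\boxplus_\gamma$ and left distributivity of $\cdot$ over $\boxplus_\gamma$ directly, but the transport argument is cleaner.

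Next I would prove the isomorphism of near-vector spaces. The most economical route is transitivity. The excerpt has already constructed an explicit isomorphism $(\Upsilon, \mathrm{id})$ from $((F, \boxplus_1, \cdot), (F, \cdot))$ to $((F, \boxplus, \boxdot), (F, \cdot))$, where $\Upsilon(\alpha) = \alpha \boxdot 1$, together with the verifications that $\Upsilon$ is additive ($\Upsilon(\alpha \boxplus_1 \beta) = \Upsilon(\alpha) \boxplus \Upsilon(\beta)$) and respects the action ($\Upsilon(\alpha \cdot \beta) = \alpha \boxdot \Upsilon(\beta)$), and $\Upsilon$ is a bijection because $F \boxdot 1 = F$ together with the point-free property. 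So it remains only to produce an isomorphism between $((F, \boxplus_\gamma, \cdot), (F, \cdot))$ and $((F, \boxplus_1, \cdot), (F, \cdot))$ and compose. For the latter I would use the right scalar map $\rho_\gamma : \alpha \mapsto \alpha \cdot \gamma$: by the relation defining $\boxplus_\gamma$, rearranged as $(\alpha \boxplus_\gamma \beta) \cdot \gamma = (\alpha \cdot \gamma) \boxplus_1 (\beta \cdot \gamma)$, the pair $(\rho_\gamma, \mathrm{id})$ is an additive homomorphism $(F, \boxplus_\gamma) \to (F, \boxplus_1)$, it is bijective since $\gamma$ is invertible, and it intertwines the action because $\rho_\gamma(\alpha \cdot \beta) = \alpha \cdot \beta \cdot \gamma = \alpha \cdot \rho_\gamma(\beta)$, matching Definition \ref{homo} with $\varphi = \mathrm{id}$. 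Composing $\Upsilon \circ \rho_\gamma^{-1}$ (or rather the pair $(\Upsilon \circ \rho_\gamma^{-1}, \mathrm{id})$) then yields the desired isomorphism to $((F, \boxplus, \boxdot), (F, \cdot))$.

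The one point requiring genuine care, rather than pure citation, is the verification that $\rho_\gamma$ really is additive with respect to the pair $(\boxplus_\gamma, \boxplus_1)$ in the correct direction, i.e. unwinding the definition $\alpha \boxplus_{\gamma} \beta = (\alpha \cdot \gamma \boxplus_1 \beta \cdot \gamma)\cdot \gamma^{-1}$ so that the factor $\gamma^{-1}$ cancels correctly under right-multiplication by $\gamma$; here the non-commutativity of the near-field multiplication and the fact that distributivity holds only on the left mean one must track the side on which $\gamma$ acts throughout. I expect this bookkeeping to be the main (though minor) obstacle, and I would resolve it by always writing the action on the right scalar as $\rho_\gamma$ and checking the intertwining identity symbol by symbol rather than relying on commutativity. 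Everything else follows by assembling results already proved in the excerpt, so no further machinery is needed.
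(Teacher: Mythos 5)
Your overall skeleton matches the paper's implicit proof (the paper derives this lemma directly from the preceding discussion: Andr\'e's theorem for $\boxplus_1$, the formula for $\boxplus_{\gamma\boxdot 1}$, and the map $\Upsilon$), but your preferred argument for the first claim fails. Transporting $(F,\boxplus_1,\cdot)$ along the right scalar bijection $\rho_\gamma(\alpha)=\alpha\cdot\gamma$ in the sense of the Definition(Lemma) on bijection-induced structures yields the near-field $(F,\boxplus_\gamma,\cdot_{\rho_\gamma})$ whose multiplication is $\alpha\cdot_{\rho_\gamma}\beta=\rho_\gamma^{-1}\bigl(\rho_\gamma(\alpha)\cdot\rho_\gamma(\beta)\bigr)=\alpha\cdot\gamma\cdot\beta$, with multiplicative identity $\gamma^{-1}$ --- not the original $\cdot$. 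The paper's remark that $\cdot_\sigma=\cdot$ applies only when $\sigma$ is a multiplicative bijection, and $\rho_\gamma$ is scalar, not multiplicative (it does not fix $1$). So the transport principle does not show that $(F,\boxplus_\gamma,\cdot)$ is a near-field, and your stronger assertion that it is isomorphic to $(F,\boxplus_1,\cdot)$ \emph{as a near-field} is unjustified; keeping ``related by a scalar or quasi-multiplicative bijection'' distinct from ``isomorphic as near-fields'' (Lemma \ref{ringisom}) is one of the paper's central points. Your ``equivalent'' fallback is in fact the only correct one of the two: $(F,\boxplus_\gamma)$ is an abelian group because transporting a single group operation along a bijection is harmless, and left distributivity of $\cdot$ over $\boxplus_\gamma$ follows from
$\alpha\cdot\bigl((\beta\cdot\gamma\boxplus_1\delta\cdot\gamma)\cdot\gamma^{-1}\bigr)=\bigl(\alpha\cdot\beta\cdot\gamma\boxplus_1\alpha\cdot\delta\cdot\gamma\bigr)\cdot\gamma^{-1}=(\alpha\cdot\beta)\boxplus_\gamma(\alpha\cdot\delta)$,
using associativity and left distributivity over $\boxplus_1$. (Shorter still: the discussion establishes $Q(F)=F$, so Definition \ref{+u} applied to the vector $\gamma$ gives the first claim immediately; this is essentially what the paper relies on.)

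For the isomorphism of near-vector spaces your route is the right one, but the composite is inverted. Since $\rho_\gamma$ is additive from $(F,\boxplus_\gamma)$ to $(F,\boxplus_1)$ (as you correctly state) and $\Upsilon$ is additive from $(F,\boxplus_1)$ to $(F,\boxplus)$, the isomorphism onto $((F,\boxplus,\boxdot),(F,\cdot))$ is the pair $(\Upsilon\circ\rho_\gamma,\operatorname{id})$, i.e.\ $\alpha\mapsto(\alpha\cdot\gamma)\boxdot 1=\alpha\boxdot(\gamma\boxdot 1)$, which is exactly the canonical map at the vector $\gamma\boxdot 1$. The map you propose, $\Upsilon\circ\rho_\gamma^{-1}=\Upsilon\circ\rho_{\gamma^{-1}}$, is additive from $(F,\boxplus_{\gamma^{-1}})$ to $(F,\boxplus)$, not from $(F,\boxplus_\gamma)$, so as stated it is not a homomorphism for the claimed pair. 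This slip is a one-symbol repair, but together with the invalid transport step it means the proposal as written establishes neither assertion of the lemma without the corrections above.
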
 
The next result compares the near-field structures when the canonical near-vector spaces associated with them are isomorphic. We find that those near-fields are not necessarily isomorphic as near-fields. This attests to the fact that even elementary near-vector spaces are more than just the near-fields they induce. 
\begin{lemm}\label{isomorph_add}
Let $(F_1, +_1, \cdot_1)$ and $(F_2, +_2, \cdot_2)$ be two near-fields (with the same underlying scalar group $(F, \cdot)$). 
\begin{enumerate}[noitemsep,topsep=-8pt, parsep=0pt,partopsep=0pt]
\item  $(\Psi,\varphi)$ is an isomorphism of left near-vector spaces between the canonical near-vector spaces $((F_2, +_2, \cdot_2 ),(F_2, \cdot_2) )$ and $((F_1, +_1 , \cdot_1 ), (F_1, \cdot_1))$ if and only if $\varphi$ is multiplicative bijection from $(F_2, \cdot_2)$ to $(F_1, \cdot_1)$ and we have $\alpha +_2 \beta= \alpha +_\phi \beta$, for all $\alpha , \beta \in F_2$ where $\phi$ is the quasi-multiplicative map associated with $\varphi$ and $\Psi(1) \in F_{1}^*$. 
\item  $\Psi$ is a linear-isomorphism of left near-vector spaces between the canonical near-vector spaces $((F, +,\cdot ), (F, \cdot))$ and $((F, \boxplus,\cdot ), (F, \cdot))$ if and only if we have $\alpha \boxplus \beta= \alpha +_\phi \beta$, for all $\alpha , \beta \in F$ and $\phi$ is the scalar map associated with  $\Psi(1) \in F^*$.
\item  $(\Psi,\varphi)$ is an isomorphism of left near-vector spaces between the canonical near-vector spaces $((F_2, +_2, \cdot_2 ),(F_2, \cdot_2) )$ and $((F_1, +_1 , \boxdot ), (F_1, \cdot_1))$ if and only if $\varphi$ is a multiplicative bijection from $(F_2,\cdot_2)$ to $(F_1,\cdot_1)$, $\Psi$ is an additive bijection from $(F_2,+_2)$ to $(F_1, +_1)$ and $\alpha \boxdot \beta= \Psi^{-1} (\varphi(\alpha) \cdot_1 \Psi(\beta ) ) $, for all $\alpha , \beta \in F$.
\end{enumerate}
\end{lemm}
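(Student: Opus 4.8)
The plan is to treat all three statements as direct applications of the definition of a near-vector space homomorphism (Definition \ref{homo}), the key manoeuvre being to evaluate the action-compatibility condition $\Psi(\alpha \boxdot_1 u) = \varphi(\alpha) \boxdot_2 \Psi(u)$ at $u = 1$. Since in a canonical near-vector space the scalar group acts on itself by the near-field multiplication, this single evaluation pins down $\Psi$ entirely in terms of $\varphi$ and the value $\lambda := \Psi(1)$, after which the remaining axioms translate into the stated addition (or action) identity.

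For statement $1$, I would begin with the forward direction. Assuming $(\Psi,\varphi)$ is an isomorphism, Definition \ref{homo} gives that $\Psi$ is an additive bijection, $\varphi$ a multiplicative group isomorphism of the nonzero elements, and $\Psi(\alpha \cdot u) = \varphi(\alpha)\cdot\Psi(u)$. Setting $u=1$ yields $\Psi(\alpha)=\varphi(\alpha)\cdot\lambda$ with $\lambda=\Psi(1)\in F^*$ (nonzero because $\Psi$ is a bijection fixing $0$); together with $\Psi(0)=0$ this shows that $\Psi$ is exactly the quasi-multiplicative map $\phi$ associated with $\varphi$ and $\lambda$. The additive-homomorphism property $\Psi(\alpha +_2 \beta)=\Psi(\alpha)+_1\Psi(\beta)$ then reads $\phi(\alpha +_2 \beta)=\phi(\alpha)+_1\phi(\beta)$, i.e. $\alpha +_2 \beta = \phi^{-1}(\phi(\alpha)+_1\phi(\beta)) = \alpha +_\phi \beta$. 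Conversely, given $\varphi$ a multiplicative bijection and $+_2 = +_\phi$, I would set $\Psi := \phi$: Lemma \ref{scalarhom} guarantees $\phi$ is a bijection, $+_2 = +_\phi$ makes it additive, Lemma \ref{multihom} (with $F^\times = F^*$ for a scalar group) makes $\varphi$ a group isomorphism of the nonzero parts, and the direct computation $\phi(\alpha \cdot u)=\varphi(\alpha)\cdot\varphi(u)\cdot\lambda = \varphi(\alpha)\cdot\phi(u)$ recovers action-compatibility, closing the equivalence.

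Statement $2$ is the specialization of statement $1$ to $\varphi = \operatorname{Id}$, which is precisely what a linear isomorphism demands: with $\varphi = \operatorname{Id}$ the quasi-multiplicative map $\phi(\alpha)=\varphi(\alpha)\cdot\lambda = \alpha\cdot\lambda$ degenerates to the scalar map associated with $\lambda=\Psi(1)$, and the addition identity of statement $1$ becomes $\boxplus = +_\phi$, taking $(F,+,\cdot)$ as the reference structure for the induced addition. Statement $3$ is the variant in which the action on the target is not fixed to be the multiplication but is an unknown $\boxdot$. Here the same evaluation of the compatibility relation, now read as a \emph{definition} of $\boxdot$ rather than a constraint on $\Psi$, yields $\alpha \boxdot \beta = \Psi^{-1}(\varphi(\alpha)\cdot_1\Psi(\beta))$; the remaining content of Definition \ref{homo} is exactly that $\Psi$ be an additive bijection and $\varphi$ a multiplicative bijection, with no further relation forced between $+_1$ and $+_2$. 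In the converse one defines $\boxdot$ by that formula and checks it is a free scalar-group action compatible with the $\mathbb{Z}$-structure (using that $\varphi$ is a $\mathbb{Z}$-map by Lemma \ref{Zendo}), so that $((F_1,+_1,\boxdot),(F_1,\cdot_1))$ is a genuine near-vector space.

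I expect the main obstacle to be bookkeeping rather than a deep idea: one must resist the temptation to identify $\Psi$ with $\varphi$. The crux is the observation that $\Psi$ is the quasi-multiplicative (respectively scalar) map $\phi$ built from the multiplicative $\varphi$ and the single scalar $\Psi(1)$, so that the additive and multiplicative halves of a near-vector space isomorphism genuinely come apart, which is exactly the phenomenon the lemma is meant to record. A secondary care point is keeping straight which addition ($+_1$ versus $+_2$) serves as the reference in each induced addition $+_\phi$, and, in statement $3$, verifying that the transported action $\boxdot$ satisfies the freeness and $\mathbb{Z}$-compatibility axioms so that the target is a legitimate near-vector space.
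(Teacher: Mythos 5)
Your proposal is correct and follows essentially the same route as the paper: in both, the key step is evaluating the compatibility condition $\Psi(\alpha \boxdot \beta) = \varphi(\alpha)\cdot_1 \Psi(\beta)$ at $\beta = 1$ to identify $\Psi$ with the quasi-multiplicative map $\phi$ associated with $\varphi$ and $\lambda=\Psi(1)\in F_1^*$, after which additivity of $\Psi$ is precisely the identity $\alpha +_2 \beta = \alpha +_\phi \beta$, and the converse is obtained by defining $\Psi := \phi$ and checking bijectivity, additivity, and action-compatibility directly. The only differences are cosmetic: the paper deduces statement 2 as the special case $\varphi=\operatorname{Id}$ of statement 3 rather than of statement 1, and it proves only the forward direction of statement 3, whereas you rightly note that the converse requires verifying that the transported action $\alpha \boxdot \beta = \Psi^{-1}(\varphi(\alpha)\cdot_1\Psi(\beta))$ makes $((F_1, +_1, \boxdot), (F_1,\cdot_1))$ a genuine near-vector space --- a verification the paper carries out in detail only later, in the proof of Theorem \ref{elemtheo}.
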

\begin{proof}
\begin{enumerate}[noitemsep,topsep=-8pt, parsep=0pt,partopsep=0pt]
\item  Let $(\Psi,\varphi )$ be an isomorphism of left near-vector spaces between the canonical vector spaces $((F_2, +_2, \cdot_2), (F_2, \cdot_2))$ and $((F_1, +_1 , \cdot_1 )$, $(F_1, \cdot_1))$. By definition of isomorphism of near-vector spaces, we have for any $\alpha,\beta\in F_2$, $\Psi( \alpha \cdot_2 \beta) = \varphi  (\alpha) \cdot_1 \Psi (\beta )$ where $\varphi $ is a multiplicative bijection from $(F_2, \cdot_2)$ to $(F_1, \cdot_1)$ and $\Psi$ is an additive bijection from $(F_2, +_2)$ to $(F_1, +_1)$. Therefore, since we can write $\alpha = \alpha \cdot_2 1_2$, we obtain $\Psi ( \alpha ) = \Psi (\alpha \cdot_2 1_2)= \varphi  (\alpha) \cdot_1 \Psi (1_1)$. We set $\lambda = \Psi(1_1)$. We have $\lambda \in F_{1}^*$, since $\Psi(0_2)= 0_1$ and $\Psi$ is a bijection. Therefore, the inverse of $\Psi$ is given by $\Psi^{-1} (\alpha) = \varphi ^{-1} (\alpha  \cdot_2 \lambda^{-1})$. Since $\Psi$ is in particular an additive bijection we have $\Psi (\alpha +_2 \beta)= \Psi (\alpha) +_1 \Psi ( \beta)$. That is, $\alpha +_2 \beta =\Psi^{-1} (  \Psi (\alpha) +_1 \Psi ( \beta)) =\varphi ^{-1}((\varphi  (\alpha) \cdot_1 \lambda +_1 \varphi  (\beta) \cdot_1 \lambda) \cdot_1 \lambda^{-1})$. For the converse, we suppose that $\varphi $ is multiplicative bijection from $(F_2,\cdot_2)$ to $(F_1,\cdot_1)$, $\lambda= \Psi(1_2) \in F_{1}^{*}$ and 
$\alpha +_2  \beta =\Psi^{-1} (  \Psi (\alpha) +_1 \Psi ( \beta)) = \varphi ^{-1}(( \varphi (\alpha) \cdot_1 \lambda +_1 \varphi  (\beta) \cdot_1 \lambda) \cdot_1 \lambda^{-1})$.  We define $\Psi (\alpha ) = \varphi (\alpha) \cdot_1 \lambda$, for any $\alpha \in F_2$. We prove that $\Psi$ is an isomorphism of left near-vector spaces between $((F_2, +_2, \cdot_2 ), (F_2, \cdot_2))$ and $((F_1, +_1, \cdot_1 ), (F_1, \cdot_1))$.  We have that $\Psi$ is a bijection. Indeed, the inverse map $\Psi^{-1}$ of $\Psi$ is defined by $\Psi^{-1} (\alpha) = \varphi ^{-1} (\alpha \cdot_1 \lambda^{-1})$, for all $\alpha \in F_2$. Moreover, for all $\alpha , \beta \in F_2$, 
$$\Psi ( \alpha +_2 \beta ) = \Psi (\varphi ^{-1}(( \varphi  (\alpha) \cdot_1  \lambda +_1 \varphi  (\beta)  \cdot_1 \lambda) \cdot_1 \lambda^{-1}) )= \varphi  (\alpha)  \cdot_1 \lambda +_1 \varphi  (\beta) \cdot_1 \lambda= \Psi (\alpha ) +_1 \Psi (\beta),$$ 
and
$$\Psi (\alpha \cdot_2 \beta ) = \varphi (\alpha  \cdot_2 \beta)  \cdot_1 \lambda = (\varphi ( \alpha)  \cdot_1 \varphi (\beta) )\cdot_1 \lambda = \varphi ( \alpha)  \cdot_1 (\varphi (\beta) \cdot_1 \lambda) = \varphi (\alpha ) \cdot_1 \Psi (\beta).$$
Finally, since by assumption, $\varphi $ is a multiplicative bijection from $(F_2,\cdot_2)$ to $(F_1,\cdot_1)$, we conclude the proof. 
\item This is a particular case of $3.$ where $\varphi=id$. For this reason, we only prove $3.$. 
\item  Let $(\Psi,\varphi )$ be an isomorphism of left near-vector spaces between $((F_2, +_2, \cdot_2 ), (F_2, \cdot_2))$ and $((F, +_1 , \boxdot )$, $(F_1, \cdot_1))$. We have for any $\alpha,\beta\in F$, $\Psi( \alpha \boxdot \beta) = \varphi  (\alpha) \cdot_1 \Psi (\beta )$ where $\varphi $ is a multiplicative bijection from $(F_2,\cdot_2)$ to $(F_1,\cdot_1)$ and $\Psi$ is an additive bijection from $(F_2,+_2)$ to $(F_1, +_1)$. Therefore, $ \alpha \boxdot \beta =\Psi^{-1} ( \varphi  (\alpha) \cdot_1 \Psi (\beta ))$, for any $\alpha,\beta\in F_2$. 
\end{enumerate}
\end{proof}

\begin{corollary}\label{dvr}
 Let $(F_1, +_1, \cdot_1)$ and $(F_2, +_2, \cdot_2)$ be two left near-fields and $(F_1, +_1, \cdot_1)$ be a division ring. The canonical near-vector space $((F_1, +_1, \cdot_1 ), (F_1, \cdot_1))$ is isomorphic to the canonical near-vector space $((F_2, +_2 , \cdot_2 ), (F_2, \cdot_2))$ if and only if $ (F_1, +_1, \cdot_1)$ is isomorphic to $(F_2, +_2, \cdot_2)$ as a near-field. In particular, when the canonical near-vector space $((F_1, +_1, \cdot_1 ), (F_1, \cdot_1))$ is isomorphic to the canonical near-vector space $((F_2, +_2, \cdot_2 ), (F_2, \cdot_2))$, then $(F_2, +_2, \cdot_2)$ is a division ring.
\end{corollary}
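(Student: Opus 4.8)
The plan is to read off both implications directly from Lemma~\ref{isomorph_add}(1), using the division-ring hypothesis only at one point, to force an a priori quasi-multiplicative twist to collapse.

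For the reverse implication, which in fact needs no hypothesis on $F_1$, I would start from a near-field isomorphism $\varphi\colon(F_2,+_2,\cdot_2)\to(F_1,+_1,\cdot_1)$. By Lemma~\ref{ringisom} this is precisely the data of a multiplicative bijection $\varphi\colon(F_2,\cdot_2)\to(F_1,\cdot_1)$ with $\alpha+_2\beta=\alpha\,{+_1}_\varphi\beta$ for all $\alpha,\beta$. Taking $\Psi=\varphi$, so that $\Psi(1_2)=1_1\in F_1^*$ and the right quasi-multiplicative map associated with $\varphi$ and $\lambda=1_1$ is $\varphi$ itself, Lemma~\ref{isomorph_add}(1) then delivers an isomorphism of the two canonical near-vector spaces.

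For the forward implication I would invoke Lemma~\ref{isomorph_add}(1) the other way: an isomorphism $(\Psi,\varphi)$ of the canonical near-vector spaces supplies a multiplicative bijection $\varphi\colon(F_2,\cdot_2)\to(F_1,\cdot_1)$ together with $\lambda=\Psi(1)\in F_1^*$ such that $\alpha+_2\beta=\alpha+_\phi\beta$, where $\phi$ is the right quasi-multiplicative map associated with $\varphi$ and $\lambda$. This is where the hypothesis enters: since $(F_1,+_1,\cdot_1)$ is a division ring, $\cdot_1$ is right distributive over $+_1$, and the observation recorded in the definition of the near-ring addition induced by a bijection gives $+_\phi=+_\varphi$; that is, $\alpha+_2\beta=\varphi^{-1}(\varphi(\alpha)+_1\varphi(\beta))$. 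Rewriting this as $\varphi(\alpha+_2\beta)=\varphi(\alpha)+_1\varphi(\beta)$ shows that the multiplicative bijection $\varphi$ is also additive, hence a near-field isomorphism $(F_2,+_2,\cdot_2)\cong(F_1,+_1,\cdot_1)$.

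The step I expect to be the crux is exactly the collapse $+_\phi=+_\varphi$: without right distributivity the induced addition retains a genuine $\lambda$-twist, which is the quasi-multiplicative phenomenon distinguishing near-linear from linear algebra stressed just before the statement, so the division-ring hypothesis is doing real work here rather than being cosmetic. For the final ``in particular'' claim I would simply transport right distributivity along the near-field isomorphism $\varphi$ just produced: for $a,b,c\in F_2$ one verifies $(a+_2 b)\cdot_2 c=a\cdot_2 c+_2 b\cdot_2 c$ by applying $\varphi$, invoking right distributivity in $F_1$, and applying $\varphi^{-1}$. Since a left near-field that is right distributive is a division ring, $(F_2,+_2,\cdot_2)$ is a division ring.
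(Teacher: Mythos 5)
Your proposal is correct and takes essentially the same route as the paper: both rest on Lemma \ref{isomorph_add} to translate the near-vector-space isomorphism into the identity $\alpha +_2 \beta = \varphi^{-1}((\varphi(\alpha)\cdot_1\lambda +_1 \varphi(\beta)\cdot_1\lambda)\cdot_1\lambda^{-1})$, use right distributivity in the division ring $F_1$ to collapse the $\lambda$-twist so that $+_2 = +_\varphi$, and conclude via Lemma \ref{ringisom}. You are in fact slightly more complete than the paper, which writes out only the forward implication and asserts the converse and the transport of the division-ring property to $(F_2,+_2,\cdot_2)$ without detail, whereas you spell both out.
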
 
\begin{proof}
 Suppose that the canonical near-vector space $((F_1, +_1, \cdot_1), (F_1, \cdot_1))$ is isomorphic to the canonical near-vector space $((F_2, +_2 , \cdot_2), (F_2, \cdot_2))$ as a near-vector space. By Lemma \ref{isomorph_add} 3., there is a $\varphi$, a multiplicative bijection from $(F_2, \cdot_2)$ to $(F_1, \cdot_1)$, $\lambda= \Psi(1) \in F_{1}^{*}$ and $\alpha +_2 \beta= \varphi^{-1} ((\varphi(\alpha) \cdot_1 \lambda +_1 \varphi (\beta ) \cdot_1 \lambda) \cdot_1 \lambda^{-1} ) $. Since $(F_1, +_1, \cdot_1)$ is a division ring, the multiplication is distributive onto the addition so that $\alpha +_2 \beta= \varphi^{-1} ((\varphi(\alpha) +_1 \varphi(\beta )) = \alpha +_\varphi \beta$. By Lemma \ref{ringisom} 1., that proves that $ (F_1, +_1, \cdot_1)$ is isomorphic to $(F_2, +_2, \cdot_2)$ as a near-field. As a consequence, $(F_2, +_2, \cdot_2)$ is itself a division ring. 
\end{proof} 

From the previous lemma, we extract the following definition. 
\begin{defin}Let $(F_1, +_1, \cdot_1)$ and $(F_2, +_2, \cdot_2)$ be near-fields, $\phi$ be a quasi-multiplicative bijection from $(F_2, \cdot_2)$ to $(F_1, \cdot_1)$, $\varphi$ be a multiplicative bijection from $(F_2, \cdot_2)$ to $(F_1, \cdot_1)$ and $\Psi$ be a $\mathbb{Z}$-bijection $(F_2, +_2)$ to $(F_1, +_1)$. We define the scalar multiplication as the binary operation $\cdot^{\varphi, \Psi}$ sending $( \alpha, \beta)$ to $\alpha \cdot_{\varphi, \Psi} \beta =\Psi^{-1} ( \varphi (\alpha ) \cdot_1 \Psi (\beta))$, for all $\alpha, \beta \in F$.
\end{defin}
We now define the concept of an elementary near-vector space. 
\begin{defin}\label{elemen}Let $(F, +, \cdot)$ be a near-field. An {\sf elementary near-vector space} associated with $(F, +, \cdot)$ is a near-vector space of the form $((K, \boxplus , \boxdot )$, $(K, \cdot))$ that is isomorphic to the canonical near-vector space, $((F, +, \cdot), (F,\cdot))$ associated with $(F, +, \cdot)$.
\end{defin}
\begin{lemm}\label{elementary_1}Let $(F, +, \cdot)$ be a near-field and $((K, \boxplus , \boxdot )$, $(K, \cdot))$ an elementary near-vector space associated with $(F, +, \cdot)$. Then $\boxplus_{1} = +_{\phi}$ and $(K, +_{\phi}, \cdot)$ is a near-field where $\phi$ is a quasi-multiplicative bijection from $(F, \cdot )$ to $(K, \cdot)$. 
\end{lemm}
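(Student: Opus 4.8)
The plan is to reduce everything to the single canonical addition $\boxplus_1$ on $K$ and then transport it to the near-field $(F,+,\cdot)$ through the isomorphism hypothesis, invoking Lemma~\ref{isomorph_add}. First I would check that the hypotheses of the discussion preceding Definition~\ref{elemen} are met by $((K,\boxplus,\boxdot),(K,\cdot))$, namely that $K\boxdot 1=K$; this is exactly what makes $\boxplus_1$ well defined and guarantees (by \cite[Theorem~2.4]{Andre}, as recalled there) both that $(K,\boxplus_1,\cdot)$ is a near-field and that there is a natural linear isomorphism $(\Upsilon,\operatorname{Id})$ from $((K,\boxplus_1,\cdot),(K,\cdot))$ to $((K,\boxplus,\boxdot),(K,\cdot))$, with $\Upsilon(\alpha)=\alpha\boxdot 1$. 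Once $(K,\boxplus_1,\cdot)$ is known to be a near-field, the second assertion of the lemma is free, so the real content is to identify $\boxplus_1$ with a suitable $+_\phi$.

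To establish $K\boxdot 1=K$, let $(\Psi,\varphi)$ be an isomorphism of near-vector spaces from $((K,\boxplus,\boxdot),(K,\cdot))$ to the canonical space $((F,+,\cdot),(F,\cdot))$ (Definition~\ref{elemen}), so that $\Psi(\alpha\boxdot u)=\varphi(\alpha)\cdot\Psi(u)$ with $\Psi$ an additive bijection and $\varphi$ a multiplicative bijection. I would then study the map $m\colon K\to K$, $m(\alpha)=\alpha\boxdot 1$. Since $\Psi(m(\alpha))=\varphi(\alpha)\cdot\lambda$ with $\lambda:=\Psi(1)\in F^\ast$ (nonzero because $\Psi(0)=0$, $\Psi$ is injective and $1\neq 0$), the composite $\Psi\circ m$ is the bijection $\varphi$ followed by right multiplication by $\lambda$, which is a bijection of $F$ as $F$ is a near-field and $\lambda\in F^\times$. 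Hence $\Psi\circ m$, and therefore $m$, is a bijection, giving $K\boxdot 1=K$; the point-free property needed alongside it already holds, the action of a near-vector space being free.

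Finally I would compose the isomorphisms: $(\Upsilon,\operatorname{Id})$ followed by $(\Psi,\varphi)$ yields an isomorphism of near-vector spaces between the two \emph{canonical} spaces $((K,\boxplus_1,\cdot),(K,\cdot))$ and $((F,+,\cdot),(F,\cdot))$, with scalar part $\varphi\colon(K,\cdot)\to(F,\cdot)$. Applying Lemma~\ref{isomorph_add}(1) to this isomorphism produces a multiplicative bijection and an element of $F^\ast$, hence an associated quasi-multiplicative bijection $\phi$ between $(K,\cdot)$ and $(F,\cdot)$, such that $\alpha\boxplus_1\beta=\alpha+_\phi\beta$ for all $\alpha,\beta\in K$; that is, $\boxplus_1=+_\phi$. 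Since $(K,\boxplus_1,\cdot)$ is a near-field by the first step, $(K,+_\phi,\cdot)$ is a near-field as well. The one genuinely nontrivial point is the first step, transporting the dimension-one condition $K\boxdot 1=K$ across the isomorphism; after that the statement is an assembly of Lemma~\ref{isomorph_add} with the structure theory of $\boxplus_1$ recalled before Definition~\ref{elemen}.
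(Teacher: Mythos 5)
Your proposal is correct and takes essentially the same route as the paper: both arguments use the isomorphism $(\Psi,\varphi)$ with the canonical space $((F,+,\cdot),(F,\cdot))$ (via Lemma \ref{isomorph_add} 3.) to show that the map $\alpha \mapsto \alpha \boxdot 1$ is a bijection, then exhibit $(\Upsilon,\operatorname{Id})$ as a linear isomorphism from $((K,\boxplus_1,\cdot),(K,\cdot))$ to $((K,\boxplus,\boxdot),(K,\cdot))$, compose with $(\Psi,\varphi)$, and apply Lemma \ref{isomorph_add} 1. together with Andr\'e's theorem to get $\boxplus_1=+_\phi$ with $(K,\boxplus_1,\cdot)$ a near-field. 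The only cosmetic difference is that you cite the discussion preceding Definition \ref{elemen} for the isomorphism $(\Upsilon,\operatorname{Id})$, whereas the paper re-verifies it inline.
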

\begin{proof} We prove that $((K, \boxplus_{1} , \cdot )$, $(K, \cdot))$ is isomorphic to $((K, \boxplus , \boxdot )$, $(K, \cdot))$. We show that $(\K, \operatorname{Id})$ is an isomorphism from $((K, \boxplus_{1} , \cdot )$, $(K, \cdot))$ to $((K, \boxplus , \boxdot )$, $(K, \cdot))$, where $\K$ is the map sending $\alpha$ to $\alpha \boxdot 1$. We first prove that $\K$ is a bijection. We have that $((K, \boxplus , \boxdot )$, $(K, \cdot))$ is isomorphic to $((F, +, \cdot), (F,\cdot))$. That is there is a near-vector space isomorphism $(\Psi, \varphi)$ such that $\alpha \boxdot 1  = \Psi^{-1} ( \varphi ( \alpha) \Psi(1)) $ and $\Psi(1) \in F^*$, for all $\alpha \in K$, by Lemma \ref{isomorph_add} 3. So that ${\K}^{-1}$ is defined for all $\alpha \in K$ by ${\K}^{-1} (\alpha) = \varphi^{-1} (\Psi(\alpha) \Psi(1)^{-1} )$. In particular, $\K$ is a bijection. Moreover, let $\alpha, \beta \in K$, then $${\K} (\alpha \boxplus_{1} \beta) = (\alpha \boxplus_{1} \beta) \boxdot 1 = \alpha \boxdot 1 \boxplus \beta \boxdot 1 =\K (\alpha) \boxplus \K (\beta)$$ and 
$${\K} (\alpha  \cdot \beta) = (\alpha   \cdot  \beta) \boxdot 1 = \alpha \boxdot (\beta \boxdot 1) = \alpha \boxdot \K (\beta).$$ Thus, ${\K}$ is an isomorphism. Then, from the definition of an elementary near-vector space, $((K, \boxplus_{1} , \cdot )$, $(K, \cdot))$ is isomorphic to $((F, + , \cdot )$, $(F, \cdot))$. Therefore, by Lemma \ref{isomorph_add}, $\boxplus_{1} = +_{\phi}$, where $\phi$ is a quasi-multiplicative bijection from $F$ to $K$. By \cite[Theorem 2.4]{Andre} or \cite[Theorem 2.2.6]{DeBruyn}, we know that $(K, \boxplus_{1} , \cdot)$ is a near-field.
\end{proof}

We can now describe all the elementary near-vector spaces associated with a near-field. 
\begin{theo}\label{elemtheo}Let $(F, +, \cdot)$ be a near-field. Then $((K, \boxplus , \boxdot )$, $(K, \odot))$ is an
elementary near-vector space associated with $(F, +, \cdot)$ if and only if $\boxplus = +_{\Psi}$ and
$\boxdot = \cdot^{\varphi, \Psi}$, such that $\Psi: K \rightarrow F$ is $\mathbb{Z}$-bijection and $\varphi: (K, \odot) \rightarrow (F,\cdot)$ is a multiplicative bijection. Moreover, $\boxplus_{1_K} = +_{\phi}$ 
where $\phi: K \rightarrow F$ is the quasi-multiplicative bijection associated with $\varphi$ and $\Psi(1_K)$. When $\phi$ is multiplicative bijection, $( K , +_\phi, \cdot)$ is a near-field isomorphic to $(F, +, \cdot)$.
\end{theo}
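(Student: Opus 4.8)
The plan is to prove the equivalence by directly unwinding the definition of an isomorphism of near-vector spaces (Definition~\ref{homo}); this is essentially the content of Lemma~\ref{isomorph_add}~3. with $F$ playing the role of $F_2$ and $K$ the role of $F_1$. The only mild subtlety is that $(K,\boxplus,\odot)$ need not itself be a near-field (the vector addition $\boxplus$ and the product $\odot$ need not distribute), so I would phrase the argument through Definition~\ref{homo} rather than quoting Lemma~\ref{isomorph_add} verbatim.

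For the forward implication, suppose $((K,\boxplus,\boxdot),(K,\odot))$ is elementary, so that by Definition~\ref{elemen} there is an isomorphism $(\Psi,\varphi)$ onto $((F,+,\cdot),(F,\cdot))$ (replacing it by its inverse if needed so that $\Psi\colon K\to F$). Then $\Psi\colon (K,\boxplus)\to(F,+)$ is an additive bijection, which immediately gives $\Psi(\alpha\boxplus\beta)=\Psi(\alpha)+\Psi(\beta)$ and hence $\boxplus=+_\Psi$; being an additive bijection between abelian groups, $\Psi$ is in particular a $\mathbb{Z}$-bijection. The compatibility condition $\Psi(\alpha\boxdot u)=\varphi(\alpha)\cdot\Psi(u)$ gives $\alpha\boxdot u=\Psi^{-1}(\varphi(\alpha)\cdot\Psi(u))$, i.e. $\boxdot=\cdot^{\varphi,\Psi}$. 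Here I would extend $\varphi$ to $0_K$ by $\varphi(0_K)=0_F$, which is forced for a multiplicative bijection (Lemma~\ref{Zendo}), so that the formula also covers $\alpha=0_K$, where both sides vanish.

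Conversely, given $\boxplus=+_\Psi$ and $\boxdot=\cdot^{\varphi,\Psi}$ with $\Psi$ a $\mathbb{Z}$-bijection and $\varphi$ a multiplicative bijection, I would check that $(\Psi,\varphi)$ is an isomorphism of near-vector spaces. The definition of $+_\Psi$ makes $\Psi$ an additive isomorphism $(K,\boxplus)\to(F,+)$; by Lemma~\ref{multihom}, $\varphi$ restricts to a group isomorphism $(K^*,\odot)\to(F^*,\cdot)$ and transports the scalar-group structure, so $(K,\odot)$ is indeed a scalar group; and the definition of $\cdot^{\varphi,\Psi}$ is exactly the compatibility $\Psi(\alpha\boxdot u)=\varphi(\alpha)\cdot\Psi(u)$. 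Since the target is a near-vector space and all the structure transports along the bijections, $((K,\boxplus,\boxdot),(K,\odot))$ is a near-vector space isomorphic to the canonical one, hence elementary.

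For the ``moreover'' part, set $\lambda=\Psi(1_K)\in F^*$ and let $\phi$ be the quasi-multiplicative bijection $\phi(\alpha)=\varphi(\alpha)\cdot\lambda$ associated with $\varphi$ and $\lambda$ (a bijection by Lemma~\ref{scalarhom}). Then $\alpha\boxdot 1_K=\Psi^{-1}(\varphi(\alpha)\cdot\lambda)=\Psi^{-1}(\phi(\alpha))$, and substituting this into the defining relation $(\alpha\boxplus_{1_K}\beta)\boxdot 1_K=\alpha\boxdot 1_K\boxplus\beta\boxdot 1_K$ together with $\boxplus=+_\Psi$ yields $\phi(\alpha\boxplus_{1_K}\beta)=\phi(\alpha)+\phi(\beta)$, that is $\boxplus_{1_K}=+_\phi$. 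Finally, if $\phi$ happens to be multiplicative, then $\phi\colon(K,\odot)\to(F,\cdot)$ is a multiplicative bijection satisfying $\alpha\boxplus_{1_K}\beta=\alpha+_\phi\beta$, so by Lemma~\ref{ringisom} (1.$\Leftrightarrow$2.) it is a near-field isomorphism from $(K,\boxplus_{1_K},\odot)=(K,+_\phi,\odot)$ onto $(F,+,\cdot)$, the former being a near-field already by the elementary near-vector space discussion. I expect the bookkeeping in this last paragraph---keeping the three products $\odot$, $\cdot$ and the action $\boxdot$ straight, and correctly isolating the near-field addition $\boxplus_{1_K}$ from the vector addition $\boxplus$---to be the only place demanding genuine care; the two implications themselves are formal once the definitions are unwound.
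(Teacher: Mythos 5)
Your proposal is correct and takes essentially the same route as the paper: both implications are obtained by unwinding the definition of a near-vector space isomorphism exactly as in Lemma~\ref{isomorph_add}~3., and the identification $\boxplus_{1_K}=+_{\phi}$ comes from evaluating the defining relation $(\alpha\boxplus_{1_K}\beta)\boxdot 1_K=\alpha\boxdot 1_K\boxplus\beta\boxdot 1_K$ with $\phi(\alpha)=\varphi(\alpha)\cdot\Psi(1_K)$. The only cosmetic differences are that you derive the ``moreover'' part directly instead of routing it through Lemma~\ref{elementary_1} and then solving for $\phi$ as the paper does, and that you argue the converse by structure transport where the paper checks the near-vector-space axioms one by one; these amount to the same computations.
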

\begin{proof}
Given $(F, +, \cdot)$, a near-field. Let $((K, \boxplus , \boxdot )$, $(K, \cdot))$ be an elementary 
near-vector space associated with $(F, +, \cdot)$. By definition there is a near-vector space isomorphism ($\Psi, \varphi$) from $((K, \boxplus , \boxdot )$, $(K, \odot))$ to $((F, +, \cdot), (F,\cdot))$, where $\varphi: (K, \odot) \rightarrow (F,\cdot)$ is a multiplicative bijection. Let $\alpha, \beta\in K$. We have $\Psi(\alpha \boxplus \beta) = \Psi(\alpha) + \Psi(\beta)$, for every $\alpha, \beta \in K$.  Therefore, $\alpha \boxplus \beta = \Psi^{-1}(\Psi(\alpha) + \Psi(\beta))$ and $\Psi$ is a bijection. Furthermore, $\Psi(\alpha \boxdot \beta) = \varphi(\alpha) \cdot \Psi(\beta)$. That is, $\alpha \boxdot \beta = \Psi^{-1}(\varphi(\alpha) \cdot \Psi(\beta))$.
By definition of a near-vector space we have that for $\alpha \in K$ 
$$ \begin{array}{lrll} & -1_K \boxdot \alpha = -\alpha 
 \Leftrightarrow  \Psi^{-1}(\varphi(1_K ) \cdot \Psi(\alpha)) = \Psi^{-1}(-\Psi(\alpha)) = -\alpha \Leftrightarrow  \Psi(-\alpha) = -\Psi(\alpha), \\
\end{array}$$
and
$$ \begin{array}{lrll} 
 & 0_K \boxdot \alpha = 0_K \Leftrightarrow  \Psi^{-1}(\varphi(0_K) \cdot \Psi(\alpha)) = \Psi^{-1}(0_F)= 0_K \Leftrightarrow  \Psi(0_K) = 0_F. 
\end{array}$$

We now prove that $\boxplus_{1} = +_{\phi}$ where $\phi$ is the quasi-multiplicative bijection associated with 
$\varphi$ and $\Psi(1_K)$. Let $\alpha, \beta \in K$. We have
\begin{eqnarray}
	\alpha \boxdot 1_K \boxplus \beta \boxdot 1_K &=& \Psi^{-1}(\varphi (\alpha) \cdot \Psi(1_K)) \boxplus \Psi^{-1}(\varphi (\beta) \cdot \Psi(1_K)) \nonumber \\
	&=& \Psi^{-1}(\varphi(\alpha) \cdot \Psi(1_K) + \varphi(\beta) \cdot \Psi(1_K)). \label{envs_1}
\end{eqnarray}
However, by Lemma \ref{elementary_1}
\begin{eqnarray}
\alpha \boxdot 1_K \boxplus \beta \boxdot 1_K &=& (\alpha \boxplus_{1_K} \beta) \boxdot 1_K = (\alpha +_{\phi} \beta) \boxdot 1_K \nonumber \\
&=& \Psi^{-1}(\varphi(\phi^{-1}(\phi(\alpha) + \phi(\beta)))\cdot \Psi(1_K)). \label{envs_2}
\end{eqnarray}
From (\ref{envs_1}) and (\ref{envs_2}) we have 
\bea
\varphi(\alpha)\cdot \Psi(1_K) + \varphi(\beta)\cdot \Psi(1_K) = \varphi(\phi^{-1}(\phi(\alpha) + \phi(\beta)))\cdot \Psi(1_K)\\
(\varphi(\alpha)\cdot \Psi(1_K) + \varphi(\beta)\cdot \Psi(1_K)) \Psi(1_K)^{-1} = \varphi(\phi^{-1}(\phi(\alpha) + \phi(\beta))).
\eea
Applying $\varphi^{-1}$ to both sides of the equation, we have 
$$\phi^{-1}(\phi(\alpha) + \phi(\beta))= \varphi^{-1}((\varphi(\alpha)\cdot \Psi(1_K) + \varphi(\beta)\cdot \Psi(1_K)) \Psi(1_K)^{-1}) $$
where $\phi: K \rightarrow F$ is the quasi-multiplicative bijection associated with $\varphi$ and $\Psi(1_K)$.

Conversely, let $\Psi: K \rightarrow F$ be $\mathbb{Z}$-bijection, such that  $\boxplus= +_\Psi$ 
and 
$\varphi : (K, \odot) \rightarrow (F,\cdot)$ is a multiplicative bijection. We prove that $((K, +_{\Psi} , \cdot^{\varphi,\Psi} )$, $(K, \odot))$ is an elementary near-vector space associated with $(F,+,\cdot)$. Let $\alpha, \beta, \gamma \in K$. We have
\bea
\alpha \boxdot (\beta \boxdot \gamma) &=& \alpha \boxdot \Psi^{-1}(\varphi(\beta)\cdot \Psi(\gamma)) = \Psi^{-1}(\varphi(\alpha) \cdot \varphi(\beta) \cdot  \Psi(\gamma)) \\
	                              &=& \Psi^{-1}(\varphi(\alpha \odot \beta) \cdot  \Psi(\gamma)) = (\alpha \odot  \beta) \boxdot \gamma.
\eea
Moreover, 
\bea
\gamma \boxdot (\alpha \boxplus \beta) &=& \Psi^{-1}(\varphi(\gamma) \cdot \Psi(\alpha \boxplus \beta)) = \Psi^{-1}(\varphi(\gamma) \cdot \Psi(\alpha) + \varphi(\gamma) \cdot \Psi(\beta)).
\eea
However, 
\bea
\gamma \boxdot \alpha \boxplus \gamma \boxdot \beta &=& \Psi^{-1}(\varphi(\gamma)\cdot \Psi(\alpha)) +_{\Psi} \Psi^{-1}(\varphi(\gamma)\cdot \Psi(\beta)) \\
												&=& \Psi^{-1}(\varphi(\gamma) \cdot \Psi(\alpha) + \varphi(\gamma)\cdot \Psi(\beta)),
\eea
so that $\gamma \boxdot (\alpha \boxplus \beta) = \gamma \boxdot \alpha \boxplus \gamma \boxdot \beta$.

We want to prove $\alpha \boxdot \gamma = \beta \boxdot \gamma$ implies $\gamma = 0_K$ or $\alpha = \beta$. 
Thus, 
\bea
 & \Psi^{-1}(\varphi(\alpha) \cdot \Psi(\gamma)) &= \Psi^{-1}(\varphi(\beta) \cdot \Psi(\gamma)) \\
\Leftrightarrow &		  \varphi(\alpha) \cdot \Psi(\gamma)  &= \varphi(\beta) \cdot \Psi(\gamma).
\eea
Therefore, $\Psi(\gamma) = 0_F$ or $\varphi(\beta) = \varphi(\alpha)$. Equivalently, $\gamma = 0_K$ or $\beta = \alpha$ since $(F, +, \cdot)$ is a near-field, $\Psi: K \rightarrow F $ and $\varphi : K \rightarrow F$ are bijection.

Let $\alpha \in F$. We have 
$$ 1_K \boxdot \alpha = \Psi^{-1}(\varphi(1_K) \cdot \Psi(\alpha) ) =  \Psi^{-1}( \Psi(\alpha) ) = \alpha,$$
$$ -1_K \boxdot \alpha = \Psi^{-1}(\varphi(-1_K) \cdot \Psi(\alpha) ) =  \Psi^{-1}(- \Psi(\alpha) ) =- \alpha,$$
and
$$ 0_K \boxdot \alpha = \Psi^{-1}(\varphi(0_K) \cdot \Psi(\alpha) ) =  \Psi^{-1}(0_K) =0_K.$$
This proves that $((K, +_{\Psi} , \cdot^{\varphi,\Psi} )$, $(K, \cdot))$ is a near-vector space. To prove that $((K, +_{\Psi} , \cdot^{\varphi,\Psi} )$, $(K, \cdot))$ is an
elementary near-vector space associated with $(F, +, \cdot)$, it suffice to note that $(\Psi, \varphi)$ is an isomorphism from $((K, +_{\Psi} , \cdot^{\varphi,\Psi} )$, $(K, \cdot))$ to $((F, + , \cdot )$, $(F, \cdot))$.
\end{proof}

The following example proves that $\mathbb{Q}$ can be endowed with an addition, $\boxplus$, such that $(\mathbb{Q} , \boxplus, \cdot)$ is not isomorphic to $(\mathbb{Q} , +, \cdot)$. 

\begin{exam} \label{exq} Let $K$ be a number field such that it ring of integers is a PID with group of unit $\{ \pm 1\}$. For instance, $K = \mathbb{Q}( \sqrt{-19})$ with $\mathcal{O}_K= \mathbb{Z}\left[ \frac{ 1+ \sqrt{-19}}{2}\right]$. Since $\mathcal{O}_K$ is a PID, $\mathcal{O}_K$ is multiplicatively generated by its prime elements. Above any prime number in $\mathbb{Z}$ there is a finite number of prime elements of $\mathcal{O}_K$. Therefore, the cardinality of the prime element of $\mathcal{O}_K$ is the same as $\mathbb{N}$. That is also the cardinality of the prime number of $\mathbb{Z}$. We can therefore create a multiplicative bijection $\varphi$ from $K$ to $\mathbb{Q}$ extending by multiplicativity a bijection from a complete set of prime elements of $ \mathbb{Z}[ \frac{ 1+ \sqrt{-19}}{2}]$, distinct up to units, into a complete set of prime numbers of $\mathbb{Z}$ up to units. Then $(\mathbb{Q}, +_\varphi , \cdot ) \simeq (\mathbb{Q}( \sqrt{-19} ) , + , \cdot)$. We note that we cannot do such a construction with $\mathbb{Q}(i)$ since the group of units has order $4$. Indeed, by Lemma \ref{Zendo}, the primitive $4th$ roots of unity go to the primitive $4th$ roots of unity via a multiplicative isomorphism but there is no primitive $4th$ root of unity in $\mathbb{Q}$. 
\end{exam}

\section{Characterizing additive structures on a fixed scalar group}\label{add_struct}

Let $(F,  \cdot)$ be a scalar group. In order to understand all the near-vector spaces over the scalar group $(F, \cdot)$, we need to understand all the additive binary operations, $\boxplus$, such that $(F, \boxplus,  \cdot)$ is a near-field. 
We start with the following lemma indicating some commonality between left near-fields over the same scalar group $(F, \cdot)$. This also justifies the definition of a scalar group. The results in the lemma are not new, but we add them in a specific format that will be very useful for the rest of the discussion.
\begin{lemm}\label{additive1}
Let $(F, \cdot)$ be a scalar group and $(F, \boxplus, \cdot)$ a left-near field. 
Then $0$ is the zero element with respect to $\boxplus$. Given $\alpha\in F$, the additive inverse of $\alpha$ with respect to $\boxplus$ is either $-\alpha$ or $\alpha$, in which case $1 \boxplus 1 = 0$. Moreover, for all $\alpha, \beta \in F$, we have $\alpha (-1) = (-1) \alpha$. 
\end{lemm}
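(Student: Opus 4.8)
The plan is to prove the three claims in sequence, working entirely from the left near-field axioms and the definition of a scalar group. Recall that in a left near-field $(F,\boxplus,\cdot)$ the multiplication distributes over $\boxplus$ on the left only, so I must be careful to use only left distributivity $\gamma\cdot(\alpha\boxplus\beta)=\gamma\cdot\alpha\boxplus\gamma\cdot\beta$.

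\textbf{Step 1: $0$ is the additive identity.} First I would show that the multiplicative $0$ coincides with the additive neutral element $0_\boxplus$ of $(F,\boxplus)$. Using the scalar-monoid property $0\cdot\alpha=0$ together with left distributivity applied to $0_\boxplus\boxplus 0_\boxplus=0_\boxplus$, I would compute $0\cdot(0_\boxplus\boxplus 0_\boxplus)=0\cdot 0_\boxplus\boxplus 0\cdot 0_\boxplus$, but this needs the identity $0_\boxplus$ first. A cleaner route: take any $\alpha$ and use $\alpha=1\cdot\alpha$; apply left distributivity to $1\cdot(0_\boxplus\boxplus 0_\boxplus)$ to get $0_\boxplus\boxdot$-type cancellation. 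The standard trick is $\alpha\cdot 0_\boxplus=\alpha\cdot(0_\boxplus\boxplus 0_\boxplus)=\alpha\cdot 0_\boxplus\boxplus\alpha\cdot 0_\boxplus$, so cancelling in the group $(F,\boxplus)$ gives $\alpha\cdot 0_\boxplus=0_\boxplus$ for all $\alpha$. Setting $\alpha=0$ (the multiplicative zero) yields $0=0\cdot 0_\boxplus=0_\boxplus$, so the two zeros agree.

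\textbf{Step 2: the additive inverse of $\alpha$ is $\pm\alpha$.} Here I would exploit the square-root structure of a scalar group, namely that $\{\pm 1\}$ is the entire solution set of $x^2=1$. Let $-_\boxplus\alpha$ denote the $\boxplus$-inverse of $\alpha$. Consider $\iota:=-_\boxplus 1$, the $\boxplus$-inverse of the multiplicative unit $1$. I would show $\iota^2=1$: from $1\boxplus\iota=0$, multiply on the left by $\iota$ using left distributivity to get $\iota\cdot 1\boxplus\iota\cdot\iota=\iota\cdot 0=0$, i.e. $\iota\boxplus\iota^2=0$, whence $\iota^2=-_\boxplus\iota=1$. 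Since $\{\pm1\}$ is the full solution set of $x^2=1$, we get $\iota\in\{1,-1\}$. Then for general $\alpha$, the $\boxplus$-inverse is $\iota\cdot\alpha$ (multiply $1\boxplus\iota=0$ on the right by $\alpha$ — but right distributivity may fail, so instead observe $-_\boxplus\alpha=\iota\cdot\alpha$ follows from left-multiplying nothing; I'd use that $\alpha\boxplus(\iota\cdot\alpha)$... this requires right distributivity). The obstacle is precisely that right distributivity is unavailable. I would circumvent it by writing $\alpha=\alpha\cdot 1$ and using left distributivity in the form $\alpha\cdot(1\boxplus\iota)=\alpha\cdot 1\boxplus\alpha\cdot\iota=\alpha\cdot 0=0$, giving $-_\boxplus\alpha=\alpha\cdot\iota$. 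If $\iota=-1$ then $\alpha\cdot(-1)=-\alpha$ is the inverse; if $\iota=1$ then $-_\boxplus\alpha=\alpha\cdot 1=\alpha$, and in this case $1\boxplus 1=1\boxplus\iota=0$.

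\textbf{Step 3: $\alpha\cdot(-1)=(-1)\cdot\alpha$.} Finally I would establish the commutation with $-1$. In the case $\iota=-1$ from Step 2 we already have $\alpha\cdot(-1)=-_\boxplus\alpha$, and by the general definition $-\alpha=(-1)\cdot\alpha$, so $\alpha\cdot(-1)=-\alpha=(-1)\cdot\alpha$ directly. In the case $\iota=1$ (characteristic-two-like behaviour, $1\boxplus 1=0$), every element is its own $\boxplus$-inverse, so $-\alpha=(-1)\cdot\alpha=\alpha$ and symmetrically $\alpha\cdot(-1)=\alpha$ forces $\alpha\cdot(-1)=(-1)\cdot\alpha=\alpha$. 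Thus in both cases the identity holds. The main obstacle throughout, as noted, is the absence of right distributivity; the resolution is to route every distributive step through left distributivity by placing the variable scalar $\alpha$ on the \emph{left} of the product, which is why the inverse naturally appears as $\alpha\cdot\iota$ rather than $\iota\cdot\alpha$ and why the final commutation identity is the clean payoff of the whole argument.
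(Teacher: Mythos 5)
Your Steps 1 and 2 are sound, and Step 1 is in fact a cleaner route than the paper's: you identify the two zeros directly via $0 = 0\cdot 0_\boxplus = 0_\boxplus$, whereas the paper argues by contradiction using invertibility of nonzero elements. Likewise your derivation that $\iota := \boxminus 1$ satisfies $\iota^2 = 1$, hence $\iota \in \{1,-1\}$ by the scalar-group axiom, and that left distributivity gives $\boxminus\alpha = \alpha\cdot\iota$, matches the paper exactly. The genuine gap is in Step 3, and it propagates back into Step 2: you never prove the commutation $\alpha\cdot(-1) = (-1)\cdot\alpha$. In the case $\iota = -1$, what Step 2 actually yields is $\boxminus\alpha = \alpha\cdot(-1)$, where $\boxminus\alpha$ denotes the $\boxplus$-inverse of $\alpha$; the lemma claims this equals $-\alpha$, which by the paper's convention (Definition \ref{scalargroup}) means $(-1)\cdot\alpha$. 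Your sentence ``we already have $\alpha\cdot(-1)=\boxminus\alpha$, and by definition $-\alpha=(-1)\cdot\alpha$, so $\alpha\cdot(-1)=-\alpha=(-1)\cdot\alpha$'' silently substitutes $\boxminus\alpha = -\alpha$, which is precisely the statement under proof: the argument is circular. Similarly, in the case $\iota = 1$ you assert $(-1)\cdot\alpha = \alpha$ and $\alpha\cdot(-1)=\alpha$; both would require $-1 = 1$, which you have not established --- knowing $1 \boxplus 1 = 0$ only tells you that every element is its own $\boxplus$-inverse, not that the distinguished scalar-group element $-1$ coincides with $1$.

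The missing idea, which is the paper's key step, is purely multiplicative and uses the scalar-group axiom a second time: for $\alpha \in F^*$, compute $\left(\alpha (-1) \alpha^{-1}\right)^2 = \alpha (-1)^2 \alpha^{-1} = 1$, so $\alpha(-1)\alpha^{-1}$ lies in $\{1,-1\}$, the full solution set of $x^2=1$. If $\alpha(-1)\alpha^{-1} = -1$, commutation holds; if $\alpha(-1)\alpha^{-1} = 1$, then cancelling $\alpha$ in the group $(F^*,\cdot)$ forces $-1 = 1$, and commutation becomes trivial (this is the degenerate characteristic-two situation your Step 3 was implicitly assuming). Only with this conjugation argument in hand can you convert Step 2's conclusion $\boxminus\alpha = \alpha\cdot(-1)$ into the stated form $\boxminus\alpha = (-1)\cdot\alpha = -\alpha$, and it is also what proves the third claim of the lemma in full generality. (The case $\alpha = 0$ is trivial.)
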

\begin{proof} 
Let $(F, \cdot)$ be a scalar group and $(F, \boxplus, \cdot)$ a left near-field. We suppose by contradiction that the additive identity in $(F, \boxplus, \cdot)$, that we denote $0'$, is not equal to $0$ in $(F, \cdot)$. Then $0'$ is invertible with respect to $\cdot$ by definition of the scalar group $(F, \cdot)$ and therefore $0'=1$, which contradicts the definition of a near-field $(F, \boxplus, \cdot)$. \\
Now, let $\alpha \in F$. We prove that the additive inverse $\boxminus \alpha$ of $\alpha$ with respect to $\boxplus$ is either $-\alpha$ or $\alpha$. Since $\alpha \boxplus (\boxminus \alpha) = \alpha ( 1 \boxplus (\boxminus 1))$. It is enough to prove that $- 1$ is the additive inverse of $1$ with respect to $\boxplus$. We first prove that $(\boxminus 1)^2=1$. Indeed, $0 =(\boxminus 1) (1\boxplus (\boxminus 1 ))= (\boxminus 1) \boxplus ( \boxminus 1)^2 $ and we obtain the result adding $1$ to both side of the equation.  Therefore, either $\boxminus 1=1$ or $\boxminus 1=-1$.
Thus, if $\boxminus 1  \neq -1$, we have $\boxminus 1=1$. Thus $1 \boxplus 1 =0$. 
Let $\alpha \in F^*$, we have $(\alpha (-1) \alpha^{-1})^2=1$. Therefore, from the above, we get that either $- \alpha = \alpha$ and that implies that $char(F)=2$. Or $\alpha (-1) = (-1) \alpha$. In both cases, we have proven what we wanted. When $\alpha =0$ the result is trivial. 
\end{proof} 
Given a scalar group, the axioms necessary to define an addition such that the addition and the given multiplication define a left near-field are absorbed by the notion of left near-field addition map. 
\begin{defin} \label{nfam}
Let $(F, \cdot)$ be a scalar group.
 A map $\rho: F \rightarrow F$ is said to be {\sf a left near-field addition map} on $(F, \cdot)$ if for all $\alpha \in F^*$ and $\beta \in F$, \\
$1.$ $\rho (0) =1$. We refer to this property as the identity property of $\rho$.\\
$2.$ $\rho (-1)=0$. We refer to this property as the inverse property of $\rho$.\\
$3.$ $\rho ( \alpha^{-1} )= \alpha^{-1} \rho (\alpha)$. We refer to this property as the abelian property of $\rho$. \\
$4.$ $\rho ( \alpha \rho (\beta)) = \alpha \rho ( \beta \rho ( (\alpha \beta)^{-1}))$,
 when $\alpha , \ \beta \in F^*$. We refer to this property as the associative property of $\rho$.
\end{defin} 

\begin{rem} \label{assoc}
Let $(F, \cdot)$ be a scalar group, $\rho$ be a left near-field addition map, and $ \alpha \in F^* \backslash \{ -1\}$. Then we have $\rho ( \alpha \rho (\alpha^{-1} \beta)) = \rho ( \alpha) \rho (   \rho ( \alpha)^{-1} \beta)$.
\end{rem}

\begin{defin}
Let $(F, \cdot)$ be a scalar group.
Given a left near-field $(F, \boxplus, \cdot)$, we denote $\rho_{\boxplus}$ the map sending $\alpha$ to $ 1\boxplus \alpha$. We prove in Lemma \ref{nfa} (1) that $\rho_{\boxplus}$ is a near-field addition map. \\
 %Let $(F, \cdot)$ be a scalar group and $\rho$ be a near-field addition map. 
We define the operation $\boxplus_\rho$ as the operation defined for all $\alpha, \beta \in F$ such that $\alpha \boxplus_\rho \beta = \alpha \rho (\alpha^{-1} \beta) $ when $\alpha\neq 0$  and $\alpha \boxplus_\rho \beta = \beta$, otherwise. We prove in Lemma \ref{nfa} (2) that $(F, \boxplus_\rho, \cdot)$ is a left near-field. We denote $(F, \boxplus_\rho, \cdot)$ simply as ${}_\rho F$ and refer to it as a $\rho$-near-field. When $\alpha_1, \cdots, \alpha_s\in F$, we denote ${}^\rho \sum_{k=1}^s \alpha_k = \alpha_1 \boxplus_\rho \cdots  \boxplus_\rho \alpha_s$. 
\end{defin} 

We now prove that there is a correspondence between left near-field addition maps and left near-field structures over a scalar group. 
\begin{lemm}\label{nfa}
Let $(F, \cdot)$ be a scalar group. 
\begin{enumerate} 
\item Suppose that $(F, \boxplus , \cdot)$ is a left near-field then $\rho_{\boxplus}$ is a near-field addition map on $(F,\cdot)$
\item Suppose that $\rho$ is a near-field addition map on $(F, \cdot)$ then 
$(F, \boxplus_\rho , \cdot)$ is a left near-field. 
\end{enumerate}
\end{lemm}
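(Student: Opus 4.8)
The plan is to handle the two implications separately. For (1), I verify the four axioms of a near-field addition map for $\rho_{\boxplus}(\alpha)=1\boxplus\alpha$ directly from the near-field axioms and Lemma~\ref{additive1}; for (2), I build up the abelian group $(F,\boxplus_\rho)$ one axiom at a time, with associativity of $\boxplus_\rho$ being the only substantial point, and then check left distributivity.

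For (1): Property~$1$ is $\rho_{\boxplus}(0)=1\boxplus 0=1$, since $0$ is the $\boxplus$-zero by Lemma~\ref{additive1}. Property~$2$ is $\rho_{\boxplus}(-1)=1\boxplus(-1)=0$: by Lemma~\ref{additive1} the $\boxplus$-inverse $\boxminus 1$ of $1$ lies in $\{1,-1\}$, and if $\boxminus 1=1$ then $\mathrm{char}(F)=2$ and $-1=1$, so $1\boxplus(-1)=0$ in every case. Property~$3$, namely $\rho_{\boxplus}(\alpha^{-1})=\alpha^{-1}\rho_{\boxplus}(\alpha)$, unwinds by left distributivity to $1\boxplus\alpha^{-1}=\alpha^{-1}\boxplus 1$, which is commutativity of $\boxplus$. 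Property~$4$ is a direct expansion: left distributivity gives $\rho_{\boxplus}(\alpha\rho_{\boxplus}(\beta))=1\boxplus\alpha\boxplus\alpha\beta$, while $\alpha\rho_{\boxplus}(\beta\rho_{\boxplus}((\alpha\beta)^{-1}))=\alpha\boxplus\alpha\beta\boxplus 1$ after using $\beta(\alpha\beta)^{-1}=\alpha^{-1}$ and $\alpha\alpha^{-1}=1$, and these agree by associativity and commutativity of $\boxplus$.

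For (2): that $0$ is a two-sided identity is immediate, since $0\boxplus_\rho\beta=\beta$ by definition and $\alpha\boxplus_\rho 0=\alpha\rho(0)=\alpha$ by Property~$1$. Commutativity follows from Property~$3$: for $\alpha,\beta\in F^*$, writing $\delta=\alpha^{-1}\beta$ gives $\beta\rho(\beta^{-1}\alpha)=\beta\rho(\delta^{-1})=\beta\delta^{-1}\rho(\delta)=\alpha\rho(\alpha^{-1}\beta)$. The element $-\alpha$ is an inverse of $\alpha$: since $-1$ is central by Lemma~\ref{additive1}, $\alpha^{-1}(-\alpha)=-1$, whence $\alpha\boxplus_\rho(-\alpha)=\alpha\rho(-1)=0$ by Property~$2$. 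Left distributivity $\gamma(\alpha\boxplus_\rho\beta)=\gamma\alpha\boxplus_\rho\gamma\beta$ is a one-line cancellation, since $(\gamma\alpha)^{-1}(\gamma\beta)=\alpha^{-1}\beta$ makes both sides equal $\gamma\alpha\,\rho(\alpha^{-1}\beta)$ (the cases $\alpha=0$ or $\gamma=0$ being trivial). As $(F^*,\cdot)$ is already a group and $0$ is absorbing by the scalar-group hypotheses, the whole statement reduces to associativity of $\boxplus_\rho$.

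Associativity of $\boxplus_\rho$ is the main obstacle. Reducing first by commutativity and the identity law, one may assume $\alpha,\beta,\gamma\in F^*$. Set $a=\alpha^{-1}\beta$ and $b=\beta^{-1}\gamma$, so that $ab=\alpha^{-1}\gamma$. In the generic case where $\alpha\boxplus_\rho\beta\neq 0$ (equivalently $\rho(a)\neq 0$, which forces $a\neq -1$ by the contrapositive of Property~$2$), expanding both bracketings and left-cancelling $\alpha$ reduces the identity to $\rho(a)\rho(\rho(a)^{-1}ab)=\rho(a\rho(b))$, which is exactly Remark~\ref{assoc} applied with parameters $a$ and $ab$; note this covers the subcase $\beta\boxplus_\rho\gamma=0$ as well. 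The remaining degenerate case is $\alpha\boxplus_\rho\beta=0$, i.e.\ $\rho(a)=0$: here the left bracketing is $0\boxplus_\rho\gamma=\gamma$, and one must show $\alpha\boxplus_\rho(\beta\boxplus_\rho\gamma)=\gamma$, which amounts to proving $\rho(a\rho(b))=ab$ whenever $\rho(a)=0$. I expect to settle this from the auxiliary identity $\rho(x\rho(x^{-1}z))=x\rho(x^{-1}\rho(z))$ for $x,z\in F^*$ (obtained from Property~$4$ by setting $y=x^{-1}z$ and applying Property~$3$), together with the centrality of $-1$ from Lemma~\ref{additive1}; this bookkeeping over the vanishing locus of $\rho$ is the genuinely delicate part. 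Once associativity is secured, $(F,\boxplus_\rho)$ is an abelian group, left distributivity holds, and therefore $(F,\boxplus_\rho,\cdot)$ is a left near-field.
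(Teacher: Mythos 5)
Your Part (1) is correct and in substance the same as the paper's (you verify Property 4 by direct expansion, the paper extracts it from associativity of $\boxplus$ via the substitution $\alpha=c^{-1}a$, $\beta=a^{-1}b$; these are the same computation). In Part (2), your treatment of the identity, commutativity, inverses and left distributivity also matches the paper, and your appeal to Lemma~\ref{additive1} for centrality of $-1$ is harmless in substance, since that part of its proof uses only the scalar-group axioms and not the near-field structure being constructed. The genuine gap is in associativity, at exactly the point you flag: in the degenerate case $\alpha\boxplus_\rho\beta=0$, i.e.\ $\rho(a)=0$ for $a=\alpha^{-1}\beta$, you correctly reduce the claim to $\rho(a\rho(b))=ab$, but then only write that you ``expect to settle this'' from the auxiliary identity and centrality. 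That is not a proof, and the expectation does not cash out easily: the auxiliary identity with $x=a$, $z=ab$ gives $\rho(a\rho(b))=a\rho(a^{-1}\rho(ab))$, which merely trades the claim for $\rho(a^{-1}\rho(ab))=b$ --- essentially the statement that $x\mapsto -\rho(-x)$ inverts $\rho$, a fact the paper establishes only \emph{after} Lemma~\ref{nfa}, so it is not available here.

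The case can be closed inside your framework, but it needs one more targeted use of Property~4 rather than the auxiliary identity: take $\alpha:=b^{-1}a^{-1}$ and $\beta:=a$ (both in $F^*$); then $(\alpha\beta)^{-1}=b$ and Property~4 reads $\rho(b^{-1}a^{-1}\rho(a))=b^{-1}a^{-1}\rho(a\rho(b))$, whose left side is $\rho(0)=1$ since $\rho(a)=0$, giving $\rho(a\rho(b))=ab$ as required. More instructively, the paper shows the case split is avoidable altogether: it invokes commutativity \emph{before} expanding, writing $(a\boxplus_\rho b)\boxplus_\rho c=c\boxplus_\rho(a\boxplus_\rho b)=c\rho\bigl(c^{-1}a\rho(a^{-1}b)\bigr)$, which is legitimate whether or not $a\boxplus_\rho b=0$ because the defining formula $u\boxplus_\rho v=u\rho(u^{-1}v)$ requires only the \emph{first} argument to be nonzero; a single application of Property~4 with $\alpha=c^{-1}a$, $\beta=a^{-1}b$ then yields $c\rho\bigl(c^{-1}a\rho(a^{-1}b)\bigr)=a\rho\bigl(a^{-1}b\rho(b^{-1}c)\bigr)=a\boxplus_\rho(b\boxplus_\rho c)$ in all cases at once. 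As written, your proof is incomplete precisely at the point you call ``the genuinely delicate part,'' and that part is in fact dispensable.
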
 
\begin{proof}
Let $(F, \cdot)$ be a scalar group. 
\begin{enumerate}
\item Suppose $(F , \boxplus , \cdot)$ is a left near field. Let $\mathcal{A} : F \times F \rightarrow F$  be the binary operation sending $(\alpha , \beta)$ to $\alpha \boxplus \beta$ and $\rho = \mathcal{A} ( 1,-)$.  We have $\rho(0) =\mathcal{A} ( 1,0)= 1\boxplus 0 = 1$, by Lemma \ref{additive1}.
 We also have that $ \mathcal{A} (1, -1) = \rho(-1) = 0$, again by Lemma \ref{additive1}. Furthermore, the commutative property of $\mathcal{A}$ leads to $\rho(\alpha^{-1}) = \mathcal{A} (1, \alpha^{-1}) = \alpha^{-1}  \mathcal{A} (\alpha, 1) = \alpha^{-1}  \mathcal{A} (1, \alpha) = \alpha^{-1} \rho(\alpha)$, for any $\alpha$ non-zero element of $F$. 
 Given $a, b , c\in F^*$. The associativity property means that $\mathcal{A}(\mathcal{A}(a, b), c) = \mathcal{A}(a, \mathcal{A}(b, c))$. The left-hand side gives
\bea
\mathcal{A}(\mathcal{A}(a, b), c) &=& \mathcal{A}(a\mathcal{A}(1, a^{-1}b), c)
= \mathcal{A}(a \rho(a^{-1}b), c) 
= c \mathcal{A}(c^{-1} a \rho(a^{-1} b), 1) \\
&=& c \mathcal{A}(1, c^{-1}  a\rho(a^{-1} b)) = c \rho(c^{-1} a\rho(a^{-1} b)).
\eea
The right-hand side gives
\bea
\mathcal{A}(a, \mathcal{A}(b, c)) &=& \mathcal{A}(a, b\mathcal{A}(1, b^{-1} c)) = \mathcal{A}(a, b \rho(b^{-1} c)) \\ 
&= & a \mathcal{A}(1, a^{-1} \rho(b^{-1} c)) 
= a \rho(a^{-1} b\rho(b^{-1} c)).
\eea
Thus,\begin{equation}  \rho(c^{-1} a  \rho(a^{-1}  b)=c^{-1}  a \rho(a^{-1} b \rho(b^{-1} c)).\end{equation} Setting $\alpha = c^{-1} a$ and $\beta = a^{-1} b$, we have $\alpha \beta = c^{-1} b$ and thus $(\alpha \beta)^{-1} = b^{-1}  c$. 
 Substituting these into Equation (1) we obtain the result $\rho ( \alpha \rho (\beta)) = \alpha \rho ( \beta \rho ( (\alpha \beta)^{-1})$. \\ 
\item Suppose that $\rho$ is a near-field addition map. %We set $\alpha \boxplus \beta= \alpha \rho ( \alpha^{-1}  \beta)$ for all $\alpha\in F^*$ and $\beta \in F$ and $\alpha \boxplus \beta=\beta$, when $\alpha =0$ and $\beta \in F$. 
%Let $\mathcal{A} : F \times F \rightarrow F$  be the binary operation sending $(\alpha , \beta)$ to $\alpha \boxplus_\rho \beta$. 
It is clear that $F$ is closed under the operation $\boxplus_\rho$. %That proves that $\mathcal{A}$ is well defined. 
By definition of a left near-field addition map we know that $\rho$ has the properties $\rho (0) =1$, $\rho (-1)=0$, $\rho ( \alpha^{-1} )= \alpha^{-1} \rho (\alpha)$ and  $\rho ( \alpha \rho (\beta)) = \alpha \rho ( \beta \rho ( (\alpha \beta)^{-1})$, for all $\alpha, \beta \in F^*$.  
For any $\alpha,\ \beta, \ \gamma \in F^*$,\\  
$$\alpha \boxplus_\rho  \beta =  \alpha \rho(\alpha^{-1} \beta )=\alpha \alpha^{-1} \beta \rho(\beta^{-1} \alpha )= \beta \rho(\alpha \beta^{-1} )=  \beta \boxplus_\rho  \alpha$$
by the abelian property of $\rho$. 
Moreover, for any $\alpha \in F^*$, $$\alpha \boxplus_\rho 0  = \alpha \rho(\alpha^{-1} 0) =\alpha \rho(0)=  \alpha.$$ and $0 \boxplus_\rho \alpha =\alpha$, for all $\alpha \in F^*$. The result is true by definition when $\alpha=0$. Thus, 0 is the zero element.  This proves simultaneously that $(F, \boxplus_\rho)$ is abelian and has a $0$ element. Moreover, for any $\alpha \in F^*$, 
$$\alpha \boxplus_\rho (-\alpha) = \alpha \rho(-1)  = \alpha  0 = 0.$$ The result is still valid for $\alpha =0$. This shows that each $\alpha \in F$ has an additive inverse $-\alpha$. \\
Let $a, b \in F^*$ and $c \in F$. We now prove the associativity of $\boxplus_\rho$ 
$$\begin{array}{lll} (a\boxplus_\rho b) \boxplus_\rho c &=& a \rho(a^{-1} b) \boxplus_\rho c  =   c   \boxplus_\rho a \rho(a^{-1} b) = c \rho( c^{-1}  a  \rho(a^{-1}  b))\\
&=& a \rho(a^{-1} b \rho(b^{-1} c))= a\boxplus_\rho (b \boxplus_\rho c)
\end{array} $$
from the associative property of $\rho$ applied to $\alpha = c^{-1} a$ and $\beta = a^{-1} b$. When $a=0$ or $b=0$ the result is clear. 
For the distributivity of $\cdot$ over $\boxplus_\rho$, let $a , \alpha , \beta \in F^*$. We have $ a (\alpha \rho (\alpha^{-1} \beta))= a (\alpha \boxplus_\rho \beta )$ and $ (a \alpha)  \rho (\alpha^{-1} \beta) = a \alpha \boxplus_\rho a \beta $. We get $a (\alpha \boxplus_\rho \beta )= a \alpha \boxplus _\rho a \beta$. When either $a=0$ or $\alpha=0$ and $\beta=0$, the result is clear. This concludes the proof that $F$ is a left near-field.
\end{enumerate}
\end{proof}
\begin{rem} \label{rho0} Let $(F, \cdot)$ be a scalar group and $\rho$ be a near-field addition map. 
\begin{enumerate} 
\item We prove that $\rho^n(0) \rho ( \rho^n(0)^{-1} \rho^m(0))=\rho^{n+m} (0) $, for all $n, \ m \in \mathbb{N}$. \\
For $n =1$, the equality is clear since $\rho(0)=1$. Suppose that the equality is true for some $n \in \mathbb{N}$, $\rho^n(0) \rho ( \rho^n(0)^{-1} \rho^m(0))=\rho^{n+m} (0) $. We prove the result for $n+1$. By the induction assumption, we have 
$$\begin{array}{lll} \rho^{n+1+m} (0)&=& \rho( \rho^{n+m} (0))= \rho (\rho^n(0) \rho ( \rho^n(0)^{-1} \rho^m(0))) \\
&=&  \rho^{n+1} (0) \rho ( \rho^{n+1}(0)^{-1} \rho^m(0))),
\end{array}$$
by Remark \ref{assoc}.\\
This proves the result by induction. 
\item For all $n \in \mathbb{N}$, $\rho^n(0) \rho ( - \rho^n(0)^{-1} \rho^n(0) )=\rho^n(0) \rho ( - 1 )=0$
\end{enumerate}
\end{rem}
From the definition of a near-field map, we naturally obtain that they are bijective maps. 
\begin{lemm}
A near-field addition map is bijective. More precisely, if $\rho$ is a near-field addition map then its inverse is the map sending $\alpha$ to $-\rho(-\alpha)$. In particular, $\rho (\alpha ) \neq 0$ for any $\alpha \neq -1$.
\end{lemm}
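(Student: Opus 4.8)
The plan is to recognize $\rho$ as left translation by $1$ in the near-field $(F,\boxplus_\rho,\cdot)$ provided by Lemma \ref{nfa}(2), and then to appeal to the elementary fact that translations in a group are bijections. The starting observation is that $\rho(\alpha)=1\boxplus_\rho\alpha$ for every $\alpha\in F$: this is immediate from the definition of $\boxplus_\rho$, since $1\neq 0$ gives $1\boxplus_\rho\alpha=1\cdot\rho(1^{-1}\alpha)=\rho(\alpha)$. By Lemma \ref{nfa}(2) the pair $(F,\boxplus_\rho)$ is an abelian group, so the map $T_1\colon\alpha\mapsto 1\boxplus_\rho\alpha$ is a bijection whose inverse is left translation by the $\boxplus_\rho$-additive inverse $\boxminus 1$ of $1$.

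First I would pin down $\boxminus 1$. In the proof of Lemma \ref{nfa}(2) it is shown that $\alpha\boxplus_\rho(-\alpha)=0$ for all $\alpha\in F$, so the $\boxplus_\rho$-inverse of $1$ is the scalar $-1$; equivalently one checks directly that $1\boxplus_\rho(-1)=\rho(-1)=0$. Hence $T_1^{-1}$ is translation by $-1$, and unwinding the definition of $\boxplus_\rho$ (using $-1\neq 0$, together with $(-1)^{-1}=-1$ since $(-1)^2=1$, and $(-1)\cdot x=-x$) yields
$$
T_1^{-1}(\alpha)=(-1)\boxplus_\rho\alpha=(-1)\,\rho\bigl((-1)^{-1}\alpha\bigr)=(-1)\,\rho(-\alpha)=-\rho(-\alpha),
$$
which is exactly the claimed inverse. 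If one prefers to avoid the abstract translation principle, the same conclusion follows by the explicit computation $\rho\bigl(-\rho(-\alpha)\bigr)=1\boxplus_\rho\bigl((-1)\boxplus_\rho\alpha\bigr)=\bigl(1\boxplus_\rho(-1)\bigr)\boxplus_\rho\alpha=0\boxplus_\rho\alpha=\alpha$, and symmetrically for the reverse composite, both invoking only the associativity and identity of $\boxplus_\rho$ from Lemma \ref{nfa}(2).

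For the final clause, injectivity of $\rho$ together with $\rho(-1)=0$ forces $\rho(\alpha)=0$ to imply $\alpha=-1$, i.e. $\rho(\alpha)\neq 0$ whenever $\alpha\neq -1$. There is no substantial obstacle here; the only subtlety is bookkeeping, namely confirming $\boxminus 1=-1$ rather than the alternative $\boxminus 1=1$ left open by Lemma \ref{additive1}, which is settled by the one-line evaluation $1\boxplus_\rho(-1)=\rho(-1)=0$, and keeping track of the degenerate argument $0$, where $\rho(0)=1$ and $-\rho(-0)=-\rho(0)=-1=\rho^{-1}(0)$ are consistent.
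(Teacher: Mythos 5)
Your proof is correct, but it takes a genuinely different route from the paper's. The paper proves bijectivity directly from the axioms of Definition \ref{nfam}: it verifies that $\sigma(\alpha)=-\rho(-\alpha)$ is a two-sided inverse of $\rho$ by applying the associative property $\rho(\alpha\rho(\beta))=\alpha\rho(\beta\rho((\alpha\beta)^{-1}))$ with the substitutions $(\alpha,\beta)=(a^{-1},-1)$ and $(\alpha,\beta)=(-a^{-1},-1)$, together with $\rho(0)=1$ and $\rho(-1)=0$, treating $a=0$ separately. You instead observe that $\rho(\alpha)=1\boxplus_\rho\alpha$, i.e. that $\rho$ is translation by $1$ in the group $(F,\boxplus_\rho)$ furnished by Lemma \ref{nfa}(2), and conclude by the translation principle, pinning down $\boxminus 1=-1$ via $1\boxplus_\rho(-1)=\rho(-1)=0$ (which also settles the ambiguity left by Lemma \ref{additive1}). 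There is no circularity here: Lemma \ref{nfa} precedes this statement in the paper, and its proof uses only the four axioms of a near-field addition map, never the bijectivity of $\rho$. What each approach buys: yours is shorter and more conceptual, since bijectivity is transparent once $\rho$ is recognized as a translation, but it depends on having the full group structure of $\boxplus_\rho$ in hand; the paper's argument is self-contained at the level of the axioms and would remain valid even if the lemma were stated before Lemma \ref{nfa}. It is also worth noting that the two proofs are cousins rather than strangers: the associativity of $\boxplus_\rho$ established in Lemma \ref{nfa}(2) is itself extracted from the same associative axiom of $\rho$, so your argument repackages essentially the same computation through the group structure rather than avoiding it.
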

\begin{proof}
Let $\rho$ be a near-field addition map. We prove that $\sigma : F \rightarrow F$ sending $\alpha$ to $- \rho (-\alpha)$ is the inverse of $\rho$. Let $ a \in F^*$. Taking $\alpha= a^{-1} $ and $\beta=-1$, the property $\rho ( \alpha \rho (\beta)) = \alpha \rho ( \beta \rho ( (\alpha \beta)^{-1})$ becomes $\rho ( a^{-1} \rho (-1)) = a^{-1} \rho (-1 \rho ( - a ))$.  Since $\rho ( a^{-1} \rho (-1))  = \rho ( 0)=1$ and $a^{-1}  \rho (-1 \rho ( - a ) )= a^{-1} \rho (\sigma (  a ) )$, we obtain $\rho (\sigma (  a ) )=a$. For $a=0$, we have $\rho (-1 \rho (0 ) )=\rho(-1)=0$, so that $\rho (\sigma (  a ) ) = a$. To prove that $\sigma (\rho (  a ) ) = a$, we take $\alpha = - a^{-1}$ and $\beta=-1$. The property $\rho ( \alpha \rho (\beta)) = \alpha \rho ( \beta \rho ( (\alpha \beta)^{-1})$ becomes $\rho ( -a^{-1} \rho (-1)) = - a^{-1} \rho (- \rho ( a ))$ so that, as before, we get $1 = - a^{-1} \rho (- \rho ( a ))$. For $a =0$, we have %$\rho (\sigma (  0 ) )=  \rho (-1 \rho ( - 0 ) )= \rho(-1)=0$ and 
$\sigma (\rho (  a ) )= - \rho (- \rho ( 0 ))=-\rho(-1)=0$. That is $\sigma (\rho ( a ) ) = a$. This proves that $\rho$ is a bijection. 
\end{proof} 

\begin{rem}
For every $\alpha \in F \backslash \{ -1\}$, we have $\rho (\alpha)\in F^*$.  
\end{rem}

We now define the additive structure on a scalar group induced by a addition near-field map. 
%\begin{defin}
% Let $(F, \cdot)$ be a scalar group and $\rho$ be a near-field addition map. We define the operation $\boxplus_\rho$ as the operation defined for all $\alpha, \beta \in F$ such that $\alpha \boxplus_\rho \beta = \alpha \rho (\alpha^{-1} \beta) $ when $\alpha\neq 0$  and $\alpha \boxplus_\rho \beta = \beta$, otherwise. We have that $(F, \boxplus_\rho, \cdot)$ is a left near-field. We denote $(F, \boxplus_\rho, \cdot)$ simply as ${}_\rho F$ and refer to it as a $\rho$-near-field. When $\alpha_1, \cdots, \alpha_s\in F$, we denote ${}^\rho \sum_{k=1}^s \alpha_k = \alpha_1 \boxplus_\rho \cdots  \boxplus_\rho \alpha_s$.  
% \end{defin}
 We start with some basic properties of repeated addition. 
 \begin{lemm} \label{rho0} Let $\alpha, \beta \in F$ and $n \in\mathbb{N}$. We have $ {}^\rho \sum_{k=1}^n \alpha =  \alpha \rho^{n}(0) $ and $ {}^\rho \sum_{k=1}^n \alpha  \,\boxplus_\rho \,{}^\rho \sum_{k=1}^n \beta =  (\alpha \boxplus_\rho \beta) \rho^{n}(0) $. 
  \end{lemm}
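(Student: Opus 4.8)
The plan is to prove the first identity ${}^\rho \sum_{k=1}^n \alpha = \alpha \rho^{n}(0)$ by induction on $n$, and then to deduce the second identity from the first together with the fact that $(F,\boxplus_\rho)$ is an abelian group. For the base case $n=1$ the sum is just $\alpha$, and $\alpha\rho^{1}(0)=\alpha\rho(0)=\alpha\cdot 1=\alpha$ by the identity property of $\rho$. For the inductive step I would assume ${}^\rho\sum_{k=1}^n\alpha=\alpha\rho^n(0)$ and write ${}^\rho\sum_{k=1}^{n+1}\alpha=\alpha\rho^n(0)\boxplus_\rho\alpha$.

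The main computation is the generic case $\alpha\rho^n(0)\neq 0$. Here I unwind the definition of $\boxplus_\rho$ and use that, in the group $(F^*,\cdot)$, one has $(\alpha\rho^n(0))^{-1}\alpha=\rho^n(0)^{-1}\alpha^{-1}\alpha=\rho^n(0)^{-1}$, so that $\alpha\rho^n(0)\boxplus_\rho\alpha=\alpha\rho^n(0)\,\rho(\rho^n(0)^{-1})$. Applying the abelian property $\rho(\gamma^{-1})=\gamma^{-1}\rho(\gamma)$ with $\gamma=\rho^n(0)\in F^*$ rewrites the last factor as $\rho^n(0)^{-1}\rho^{n+1}(0)$, and after cancelling $\rho^n(0)\,\rho^n(0)^{-1}=1$ I obtain $\alpha\rho^{n+1}(0)$, as required.

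The delicate points, which I expect to be the main obstacle and which I would treat as separate cases, are precisely when $\alpha\rho^n(0)=0$, i.e.\ when $\alpha=0$ or when $\rho^n(0)=0$ (the latter occurring in positive characteristic). If $\alpha=0$, then every partial sum equals $0=0\cdot\rho^n(0)$ using the convention $0\boxplus_\rho\beta=\beta$. If $\alpha\neq 0$ but $\rho^n(0)=0$, the inductive hypothesis gives ${}^\rho\sum_{k=1}^n\alpha=0$, hence ${}^\rho\sum_{k=1}^{n+1}\alpha=0\boxplus_\rho\alpha=\alpha$, while $\alpha\rho^{n+1}(0)=\alpha\rho(\rho^n(0))=\alpha\rho(0)=\alpha$; the two values coincide. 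This case analysis, forced by the piecewise definition of $\boxplus_\rho$, is the only genuine subtlety.

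For the second identity I would apply the first identity to the element $\alpha\boxplus_\rho\beta$, obtaining ${}^\rho\sum_{k=1}^n(\alpha\boxplus_\rho\beta)=(\alpha\boxplus_\rho\beta)\rho^n(0)$, which is the desired right-hand side. By Lemma \ref{nfa}(2) the pair $(F,\boxplus_\rho)$ is an abelian group, so the $n$-fold sum of $\alpha\boxplus_\rho\beta$ can be regrouped by commutativity and associativity into $\big({}^\rho\sum_{k=1}^n\alpha\big)\boxplus_\rho\big({}^\rho\sum_{k=1}^n\beta\big)$, which is the left-hand side. Comparing the two expressions yields the claim with no further computation.
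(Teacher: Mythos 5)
Your proof is correct and follows essentially the same route as the paper: induction on $n$ for the first identity, then commutativity and associativity of $\boxplus_\rho$ (from Lemma \ref{nfa}) together with the first identity for the second. The only difference is bookkeeping: the paper appends the new summand on the \emph{left}, computing ${}^\rho \sum_{k=1}^{n+1} \alpha = \alpha \boxplus_\rho \alpha\rho^{n}(0) = \alpha\,\rho(\alpha^{-1}\alpha\rho^{n}(0)) = \alpha\rho^{n+1}(0)$ for $\alpha \in F^*$, so the defining formula for $\boxplus_\rho$ applies uniformly (its first argument $\alpha$ is nonzero even when $\rho^{n}(0)=0$), which makes your case analysis on $\alpha\rho^{n}(0)=0$ and your appeal to the abelian property of $\rho$ unnecessary.
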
 
 \begin{proof} 
 Let $\alpha \in F$. We prove that $ {}^\rho \sum_{k=1}^n \alpha =  \alpha \rho^{n}(0) $.
 When $\alpha =0$, the result is trivial. We prove the result by induction on $n$. Let $\alpha \in F^*$. For $n=1$, the result is trivial. We suppose that $ {}^\rho \sum_{k=1}^n \alpha = \alpha \rho^{n}(0)$ is true for some $n$. We prove that it remains true for $n+1$. Using the induction assumption we obtain: 
 $$ {}^\rho \sum_{k=1}^{n+1}  \alpha =  \alpha \boxplus_\rho {}^\rho \sum_{k=1}^{n}  \alpha  =  \alpha \boxplus_\rho   \alpha \rho^{n}(0)= \alpha \rho ( \alpha^{-1} \alpha \rho^{n}(0) )= \alpha \rho^{n+1}(0).$$
 Moreover,  let $\alpha, \beta \in F$. We have 
 $$\begin{array}{ccc} {}^\rho \sum_{k=1}^n \alpha  \boxplus_\rho {}^\rho \sum_{k=1}^n \beta  &=&  {}^\rho \sum_{k=1}^n (\alpha  \boxplus_\rho \beta) = (\alpha  \boxplus_\rho \beta)  \rho^{n}(0)
 \end{array}$$ 
 above we have used the abelian property of $ \boxplus_\rho$.
 \end{proof} 
We next define the notion of a characteristic map. This map will permit us to define the characteristic of a near-field.
\begin{defin} \label{rhocha}
We define the $\rho$-characteristic map, denoted $\chi_\rho$, to be the map 
$$ \begin{array}{llll} \chi_\rho: & \mathbb{Z} &\rightarrow &F\\
& n &\mapsto &  \operatorname{sgn} (n)\rho^{|n|}(0). \end{array}
$$
where $\rho^{|n|}$ denotes $\rho$ composite with itself $|n|$ times when $n\neq 0$ and we set $\rho^{0}= \operatorname{id}$. We also denote $\mathcal{C}_\rho$ the image of the map $\chi_\rho$.
\end{defin}
We obtain as, expected and is known, that the characteristic of a near-field is either $0$ or a prime number. The characteristic map defines an embedding of a prime field onto any near-field. 
\begin{lemm}(Definition) \label{charac}
 Let $(F, \cdot)$ be a scalar group and $\rho$ be a near-field addition map. Then $\chi_\rho$ is a ring homomorphism from $(\mathbb{Z}, +, \cdot )$ to $(F, \boxplus_\rho, \cdot)$. $(\mathcal{C}_\rho , \boxplus_\rho)$ is a cyclic group isomorphic either to $\mathbb{Z}$, when $\chi_\rho$ is one-to-one, or $\mathbb{F}_p$ for some $p$ prime otherwise. 
 When $
 \chi_\rho$ is one-to-one, $ \chi_\rho$ naturally induces a field morphism from $( \mathbb{Q}, +, \cdot)$ to $(F, \boxplus_\rho, \cdot)$
 $$\begin{array}{llll} \widetilde{\chi_\rho} : & \mathbb{Q} & \rightarrow  & F\\ & \frac{n}{m} & \mapsto & \operatorname{sgn} (nm)\rho^{|n|}(0) (\rho^{|m|}(0))^{-1}
 \end{array} $$ 
  localizing at the prime ideal $(0)$ of $\mathbb{Z}$. 
 Otherwise, $ \chi_\rho$ naturally induces a field morphism from $( \mathbb{F}_p, +, \cdot)$ to $(F, \boxplus_\rho, \cdot)$
  $$\begin{array}{llll} \widetilde{\chi_\rho} : & \mathbb{F}_p & \rightarrow  & F\\ & [n]_p & \mapsto & \operatorname{sgn} (n)\rho^{|n|}(0)
 \end{array} $$
using the first isomorphism theorem.  The {\sf characteristic $p$} of $\rho$ denoted $char(F)$ is $0$ when $\chi_\rho$ is one-to-one and $p$ when $ker(\chi_\rho)=p \mathbb{Z}$. We denote $F_p$ to be the field $\mathbb{Q}$ when $p=char(F)=0$ and $\mathbb{F}_p$ when $char(F)=p$. We denote $\widetilde{C_p}= \widetilde{\chi_\rho} ( F_p)$. In particular, when $F$ is finite, $|F|= p^n$ for some $p$ prime and $n \in \mathbb{N}$. A left near-field such that $\widetilde{\chi_\rho}$ is an isomorphism is referred to as {\sf a prime field.}  
\end{lemm}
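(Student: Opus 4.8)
The plan is to identify $\chi_\rho$ with the canonical $\mathbb{Z}$-module map on the abelian group $(F,\boxplus_\rho)$, and then to read off every assertion from elementary group and ring theory, using the one-sided distributivity of a near-field only where it is genuinely available. First I would prove the key identity that $\chi_\rho(n)$ equals the $n$-th multiple of $1$ in the abelian group $(F,\boxplus_\rho)$ (that is, $1$ added to itself $|n|$ times, with a $\boxplus_\rho$-inverse taken when $n<0$). For $n\geq 0$ this is exactly the summation formula ${}^\rho\sum_{k=1}^n 1 = 1\cdot\rho^n(0)=\rho^n(0)=\chi_\rho(n)$ of Lemma~\ref{rho0}, and the case $n=0$ is immediate since $\rho^0=\operatorname{id}$. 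For $n<0$ the $\boxplus_\rho$-inverse of $\rho^{|n|}(0)$ is $-\rho^{|n|}(0)$ by Lemma~\ref{additive1} (the only competing case, $\operatorname{char}(F)=2$, makes the two coincide since there $1\boxplus_\rho 1=0$), matching $\chi_\rho(n)=\operatorname{sgn}(n)\rho^{|n|}(0)$. Once this identity is in hand, additivity $\chi_\rho(n+m)=\chi_\rho(n)\boxplus_\rho\chi_\rho(m)$ is automatic, because the canonical $\mathbb{Z}$-action on any abelian group is a homomorphism; in particular the degenerate positive-characteristic situation where $\rho^n(0)=0$ for some $n\geq 1$ is absorbed for free.

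For multiplicativity I would exploit left distributivity. Writing $\gamma=\chi_\rho(n)$ and expanding $m\cdot 1=1\boxplus_\rho\cdots\boxplus_\rho 1$, left distributivity ($m$ terms, with $\gamma\cdot 0=0$ used to handle $m<0$) gives $\gamma\cdot(m\cdot 1)=m\cdot\gamma$, hence $\chi_\rho(n)\chi_\rho(m)=m\cdot\chi_\rho(n)=(mn)\cdot 1=\chi_\rho(nm)$. This shows $\chi_\rho$ is a ring homomorphism. Its image $\mathcal{C}_\rho$ is then a subgroup of $(F,\boxplus_\rho)$, cyclic since generated by $\chi_\rho(1)=1$, so by the first isomorphism theorem $\mathcal{C}_\rho\cong\mathbb{Z}/\ker\chi_\rho$. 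Because $(F,\boxplus_\rho,\cdot)$ is a near-field it has no zero divisors (if $ab=0$ with $a\neq 0$ then $b=a^{-1}\cdot 0=0$), so $\ker\chi_\rho$ is a prime ideal of $\mathbb{Z}$, i.e. $(0)$ or $(p)$; this yields $\mathcal{C}_\rho\cong\mathbb{Z}$ or $\mathbb{F}_p$ and shows $\operatorname{char}(F)$ is $0$ or prime.

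Next I would build the field morphisms. When $\chi_\rho$ is injective, each $\chi_\rho(m)$ with $m\neq 0$ lies in $F^*$ and is invertible, so the universal property of the localization $\mathbb{Z}\hookrightarrow\mathbb{Q}$ at $(0)$ produces a ring morphism $\widetilde{\chi_\rho}\colon\mathbb{Q}\to F$, $\frac{n}{m}\mapsto\chi_\rho(n)\chi_\rho(m)^{-1}$; expanding $\chi_\rho(n)=\operatorname{sgn}(n)\rho^{|n|}(0)$ and using that $-1$ is central (Lemma~\ref{additive1}) rewrites this as $\operatorname{sgn}(nm)\rho^{|n|}(0)(\rho^{|m|}(0))^{-1}$, the stated formula, and well-definedness needs only that $\mathcal{C}_\rho$ is commutative, which holds as it is a homomorphic image of $\mathbb{Z}$. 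When $\ker\chi_\rho=p\mathbb{Z}$, the first isomorphism theorem factors $\chi_\rho$ through $\mathbb{F}_p$, giving the injective $\widetilde{\chi_\rho}\colon\mathbb{F}_p\to F$. For the cardinality claim I would verify that left multiplication by $\widetilde{C_p}$ makes $(F,\boxplus_\rho)$ an $\mathbb{F}_p$-vector space: left distributivity gives $c(\alpha\boxplus_\rho\beta)=c\alpha\boxplus_\rho c\beta$, and the remaining law $(c\boxplus_\rho c')\alpha=c\alpha\boxplus_\rho c'\alpha$ holds \emph{because $c,c'$ lie in the prime field}, where $c\alpha$ reduces to the two-sided $\mathbb{Z}$-action $i\cdot\alpha$; a finite $F$ is then a finite-dimensional $\mathbb{F}_p$-space, so $|F|=p^n$.

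I expect the main obstacle to lie precisely where two-sided distributivity is wanted but unavailable, namely the multiplicativity of $\chi_\rho$ and the right distributive law for the $\mathbb{F}_p$-structure. Both must be forced through using left distributivity alone, by reducing the offending product to the canonical two-sided $\mathbb{Z}$-action on $(F,\boxplus_\rho)$; the axioms of a near-field addition map together with Lemma~\ref{rho0} are exactly what legitimise this reduction. The explicit additivity identity $\rho^n(0)\rho(\rho^n(0)^{-1}\rho^m(0))=\rho^{n+m}(0)$ provides an alternative direct route for additivity, but working with the $\mathbb{Z}$-action is cleaner since it sidesteps the boundary case $\rho^{n-1}(0)=-1$ altogether.
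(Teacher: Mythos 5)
Most of your proof is correct, and your opening move — identifying $\chi_\rho(n)$ with the canonical $\mathbb{Z}$-action $n\cdot 1$ on the abelian group $(F,\boxplus_\rho)$ via Lemma~\ref{rho0} and Lemma~\ref{additive1}, so that additivity of $\chi_\rho$ comes for free — is cleaner than the paper's explicit case analysis. Your multiplicativity argument (left distributivity applied to $\chi_\rho(m)=m\cdot 1$), your prime-kernel argument via absence of zero divisors, and the construction of the two field morphisms all agree in substance with the paper's proof.

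The gap is in the final step, the $\mathbb{F}_p$-vector space structure behind $|F|=p^n$. You let $\widetilde{C_p}$ act by \emph{left} multiplication and justify $(c\boxplus_\rho c')\alpha=c\alpha\boxplus_\rho c'\alpha$ by asserting that $c\alpha$ ``reduces to the two-sided $\mathbb{Z}$-action $i\cdot\alpha$'' for $c=\chi_\rho(i)$. That reduction is precisely what is unavailable: Lemma~\ref{rho0} gives $i\cdot\alpha={}^\rho\sum_{k=1}^{i}\alpha=\alpha\,\chi_\rho(i)$, i.e.\ the $\mathbb{Z}$-action is \emph{right} multiplication by $\chi_\rho(i)$. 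Left multiplication by $\chi_\rho(i)$ is an additive endomorphism of $(F,\boxplus_\rho)$ (by left distributivity) whose value at $1$ is $\chi_\rho(i)$, but an additive endomorphism of a near-field is not determined by its value at $1$ — that is the whole point of near-linearity — so $\chi_\rho(i)\alpha=i\cdot\alpha$ would require $\chi_\rho(i)$ to be central, and this genuinely fails for left near-fields over a scalar group. Concretely, on $K=\mathbb{Q}(\sqrt{3})$ with conjugation $\sigma$, let $\phi:K^*\to\mathbb{Z}/2\mathbb{Z}$ be the homomorphism vanishing on $-1$, on the units and on every prime element of $\mathbb{Z}[\sqrt{3}]$ except the inert prime $5$, where $\phi(5)=1$; then $\phi\circ\sigma=\phi$, the operation $a\odot b:=a\,\sigma^{\phi(a)}(b)$ makes $(\mathbb{Q}(\sqrt{3}),+,\odot)$ a left near-field whose multiplicative structure is a scalar group (if $x\odot x=1$ with $\phi(x)=1$ then $x\sigma(x)=1$, which by Hilbert 90 and $\phi\circ\sigma=\phi$ forces $\phi(x)=0$, a contradiction; hence $x=\pm1$), yet $\chi_\rho(5)=5$ satisfies $5\odot\sqrt{3}=-5\sqrt{3}$ while $5\cdot\sqrt{3}=\sqrt{3}\boxplus\cdots\boxplus\sqrt{3}=5\sqrt{3}$. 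The repair is the paper's own (Corollary~\ref{uniqueup}): make $\widetilde{C_p}$ act on the \emph{right}, $\alpha\cdot_\chi a=\alpha\,\chi_\rho(a)$; then $(\alpha\boxplus_\rho\beta)\cdot_\chi a=\alpha\cdot_\chi a\boxplus_\rho\beta\cdot_\chi a$ is exactly the right-distributivity of $\mathcal{C}_\rho$-elements supplied by Lemma~\ref{rho0}, and $\alpha\cdot_\chi(a+b)=\alpha\cdot_\chi a\boxplus_\rho\alpha\cdot_\chi b$ is the near-field's left distributive law. Note also that this same right-distributivity (not merely commutativity of $\mathcal{C}_\rho$, as you state) is needed to check that your localized map $\widetilde{\chi_\rho}$ is additive in characteristic $0$, since one must split $(\chi_\rho(a)\boxplus_\rho\chi_\rho(b))\chi_\rho(c)^{-1}$; the paper is equally terse on this point, but a complete proof should invoke it.
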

\begin{proof}Let $n,m \in \mathbb{N}$.
By Remark \ref{rho0}, $\chi_\rho(n+m)=\chi_\rho(n) \boxplus_\rho \chi_\rho(m)$ and $\chi_\rho(-n-m)=-\chi_\rho(n) \boxplus_\rho -\chi_\rho(m)$, by distributivity of $\cdot$ on $+_\rho$. We now assume that $\operatorname{sgn} ( n)=-1$, $\operatorname{sgn} ( m)=1$ and $n\leq m$. We have
$$\begin{array}{lll} \chi_\rho(n) \boxplus_\rho \chi_\rho(m) &=& \chi_\rho(n) \boxplus_\rho (\chi_\rho(-n) \boxplus_\rho  \chi_\rho(m-n)) \ \ \text{ by the above } \\
&=& (\chi_\rho(n) \boxplus_\rho \chi_\rho(-n) )\boxplus_\rho  \chi_\rho(m-n)\\
&=& 0\boxplus_\rho  \chi_\rho(m-n) \ \ \text{ by Remark \ref{assoc}}\\
&=& \chi_\rho(m-n)
\end{array}
$$
The other cases are proven using the distributivity of $\cdot$ on $+_\rho$ and exchanging the role of $n$ and $m$. That concludes the proof that $\chi_\rho$ is an additive homomorphism. Let $n \in \mathbb{N}$ and $m \in \mathbb{Z}$. We have $\chi_\rho ( nm ) =\chi_\rho ( \sum_{k=1}^n m )= {}^\rho \sum_{k=1}^n \chi_\rho ( m)= \chi_\rho ( m) \chi_\rho(n)  $, by Lemma \ref{rho0}. When $nm < 0$, we have $$\chi_\rho ( nm )= \chi_\rho ( -|nm| )= -\chi_\rho ( |nm| )=  - \chi_\rho ( |n|) \chi_\rho (| m| )= \chi_\rho ( n) \chi_\rho ( m ).$$ 
Suppose that $ \chi_\rho$ is not one-to-one. We prove that the kernel of $ \chi_\rho$ is $p \mathbb{Z}$ where $p$ is a prime number. Since $\chi_\rho$ is a ring homomorphism we know that its kernel is of the form $n \mathbb{Z}$ for some $n \in \mathbb{N}$. We suppose by contradiction that $n$ is not prime. That is there is $a, b \in \mathbb{N} \backslash \{ 1 \}$ such that $n = a b$. Moreover, since $n$ is in the kernel $\chi_\rho ( n) = \chi_\rho (ab) = \chi_\rho (a)  \chi_\rho (b)= 0$. So that either $ \chi_\rho (a) =0$ or $ \chi_\rho (b)=0$. This is a contradiction since $a$ and $b$ are proper divisors of $n$ and therefore cannot belong to $n \mathbb{Z}$.
When $\chi_\rho$ is one-to-one, the field homomorphism  $\widetilde{\chi_\rho}$ results from the universal property of the localization. Indeed, when $\chi_\rho$ is one-to-one any non-zero elements of $\mathbb{Z}$ are sent to a non-zero element of $F$ via $\chi_\rho$, therefore, is invertible. Otherwise, $\widetilde{\chi_\rho}$ is the morphism induced by the first isomorphism theorem. 
\end{proof}

The embedding of the prime near-field onto a near field of characteristic $p$ gives a $F_p$-vector space structure on $F$. We can deduce from this the uniqueness of the additive structure up to isomorphism for finite fields. 
\begin{corollary} \label{uniqueup} Let $(F, \cdot)$ be a scalar group. Under the notation of Lemma \ref{charac}.
 $\widetilde{\chi_\rho}$ induces a $F_p$-vector space structure on $F$. In particular, any element of $\widetilde{C_p}$ distributes on any element of $F$ and $\widetilde{C_p}^*$ is a commutative multiplicative subgroup of $F^*$. When $F$ is a finite near-field then $F$ has a unique additive structure $\boxplus$ on $F$ up to isomorphism such that $(F, \boxplus, \cdot)$ is a near-field.  
 \end{corollary}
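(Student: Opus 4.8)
The plan is to dispatch the three assertions in order. I would first build the vector space as a \emph{right} $F_p$-vector space, since the distributive identity supplied by Lemma \ref{rho0} places the scalar on the right. By Lemma \ref{charac} the map $\widetilde{\chi_\rho}: F_p \to (F, \boxplus_\rho, \cdot)$ is an injective field morphism onto $\widetilde{C_p}$, so I define the action of $c \in F_p$ on $v \in F$ by $v \cdot c := v \cdot \widetilde{\chi_\rho}(c)$. The axioms $v \cdot 1 = v$ and $(v \cdot c)\cdot d = v \cdot (cd)$ are immediate from $\widetilde{\chi_\rho}(1) = 1$, from $\widetilde{\chi_\rho}$ being multiplicative, and from associativity of $\cdot$; the axiom $v \cdot (c + d) = v\cdot c \boxplus_\rho v \cdot d$ follows because $\widetilde{\chi_\rho}$ is additive and $(F, \boxplus_\rho, \cdot)$ satisfies the left distributive law $v(x \boxplus_\rho y) = vx \boxplus_\rho vy$. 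Commutativity of $\widetilde{C_p}$ will make this equally a left vector space, so the unqualified phrasing of the statement is unambiguous.

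The only axiom with genuine content is $(v \boxplus_\rho w)\cdot c = v \cdot c \boxplus_\rho w \cdot c$, i.e. that every element of $\widetilde{C_p}$ is a distributive element of the near-field. For $c = \chi_\rho(n) = \rho^n(0)$ with $n > 0$ this is precisely the second identity of Lemma \ref{rho0}, which together with ${}^\rho\sum_{k=1}^n \alpha = \alpha\rho^n(0)$ reads $\alpha \cdot \rho^n(0) \boxplus_\rho \beta \cdot \rho^n(0) = (\alpha \boxplus_\rho \beta)\cdot \rho^n(0)$. I would then extend this to all of $\widetilde{C_p}$: the sign is handled by the centrality of $-1$ (Lemma \ref{additive1}), and in characteristic $0$ the remaining scalars are quotients $\chi_\rho(n)\cdot\chi_\rho(m)^{-1}$, for which I use two elementary closure facts verified on the spot, namely that a product of distributive elements is distributive and that the inverse of a distributive unit is distributive. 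This simultaneously proves the displayed ``distributes on any element of $F$'' assertion. The commutativity of $\widetilde{C_p}^{*}$ and its being a subgroup of $F^{*}$ then follow by transporting the abelian group structure of $F_p^{*}$ through the injective multiplicative map $\widetilde{\chi_\rho}$, of which $\widetilde{C_p}^{*}$ is the image.

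For the finite case I would invoke the vector space structure just obtained: if $|F| = p^n$ then $F$ is an $n$-dimensional $F_p = \mathbb{F}_p$-vector space, so $(F, \boxplus_\rho)$ is an elementary abelian $p$-group, isomorphic to $(\mathbb{Z}/p\mathbb{Z})^n$. The point is that $p$ and $n$ are forced by the underlying set: if $\boxplus$ is any near-field addition on the fixed scalar group $(F, \cdot)$ with characteristic $q$, then $|F| = q^m$, and uniqueness of prime factorisation of the fixed integer $|F|$ gives $q = p$ and $m = n$. Hence every near-field addition on $(F, \cdot)$ produces the same additive group up to isomorphism, namely $(\mathbb{Z}/p\mathbb{Z})^n$, which is the claimed uniqueness.

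The step I expect to be the main obstacle is the full distributivity claim. Lemma \ref{rho0} gives it cleanly only for the positive powers $\rho^n(0)$ placed on the right, so the care lies in (i) orienting the action so that this lemma applies verbatim, and (ii) propagating distributivity to negative and fractional elements of $\widetilde{C_p}$ via the centrality of $-1$ and the closure of distributive elements under products and inverses; these closure properties are short but must be checked rather than assumed.
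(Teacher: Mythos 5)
Your proposal is correct and takes essentially the same route as the paper: the same right scalar action $v \cdot c = v\cdot\widetilde{\chi_\rho}(c)$, left distributivity from the left near-field axiom, right distributivity from Lemma \ref{rho0}, and the finite case settled by noting $F \simeq \mathbb{F}_p^n$ as $F_p$-vector spaces with $p$ and $n$ forced by the cardinality of $F$. The only difference is that you explicitly carry the right-distributivity from positive powers $\rho^n(0)$ to negative and fractional elements of $\widetilde{C_p}$ (via centrality of $-1$ and closure of distributive elements under products and inverses), a step the paper leaves implicit in its bare citation of Lemma \ref{rho0}; this is a worthwhile filling-in of detail rather than a different argument.
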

 \begin{proof}
Suppose $char(F)=p$. We now prove that $\widetilde{\chi_\rho}$ induces a $F_p$-vector space structure on $F$. We define the scalar multiplication by $\alpha \cdot_\chi a= \alpha \cdot \chi_\rho (a)$ for all $a\in F_p$ and $ \alpha \in F$. 
 The distributivity on the left of $\cdot_\chi$ onto $\boxplus_\rho$ is guaranteed by the fact that $F$ is a left near-field, while the distributivity on the right of $\cdot_\chi$ onto $\boxplus_\rho$ follows from Lemma \ref{rho0}.
 We can then easily deduce that $F$ is a $F_p$-vector space with respect to $\boxplus_\rho$ and $\cdot$.
This implies that $\widetilde{C_p}$ also distributes on the right. 
When $F$ is finite near-field, since $F$ is a $F_p$-vector space, we have $F \simeq  \mathbb{F}_p^n$ where $p = Char(F)$ and $n$ is the integer such that $|F|=p^n$. This proves the uniqueness of the additive map up to additive isomorphism, in this case.
\end{proof} 

In the next lemma, we investigate when a bijection $\sigma$ induces a near-field structure on a given scalar group. From the result, as it stands, we see that there is an underlying quasi-multiplicative automorphism on $\widetilde{C_p}$ where $p= char(F)$. It is not clear if there could be a field structure induced by a bijection that is not isomorphic to the initial field structure. 

 \begin{theo} \label{bij+}
  Let $(F, + , \cdot)$ be a left near-field of characteristic $p$ and $\sigma$ be a bijection of $F$.
  Then $(F, +_\sigma, \cdot)$ is a left near-field if and only if $\alpha (\beta +_\sigma \gamma) =   \alpha \beta +_\sigma \alpha \gamma
$, for any $\alpha, \beta, \gamma \in F$. In particular, when $(F, +_\sigma, \cdot)$ is a left near-field then $\sigma$ is a $\mathbb{Z}$-endobijection such that $\sigma ( \alpha ( (  \beta a )_\sigma ))  = \sigma ( \alpha \beta_\sigma) a$, for all $a \in \widetilde{C_{p}} $ and $\alpha \in F$. 
We have for all $\alpha \in \widetilde{C_{p}}$ and $\beta \in F$, $\sigma^{-1} ( \beta \alpha  ) = \sigma^{-1} ( \beta ) \sigma^{-1} ( \sigma ( 1) \alpha)$. Setting $\widetilde{\sigma^{-1}}$ to be the map sending $\alpha \in F$ to $\widetilde{\sigma^{-1}} ( \alpha ) = \sigma^{-1}( \sigma (1) \alpha)$, we have for all $\alpha \in \widetilde{C_p}$ and $\beta \in F$, $\widetilde{\sigma^{-1}} ( \beta \alpha )= \widetilde{\sigma^{-1}} ( \beta  ) \widetilde{\sigma^{-1}} ( \alpha )$. In particular, $\sigma|_{\widetilde{C_{p}}}$ is a quasi-multiplicative endobijection of $\widetilde{C_{p}}$ associated with the multiplicative morphism $\widetilde{\sigma}$ defined by $\widetilde{\sigma}( \alpha )= \sigma ( \alpha ) \sigma (1)^{-1}$ for any $\alpha \in \widetilde{C_p}$ and $\sigma (1)$. In particular, when $(F, + , \cdot)$ is $(\mathbb{Q}, +, \cdot)$ the canonical rational field, $(\mathbb{Q}, +_\sigma, \cdot)$ is isomorphic to $(\mathbb{Q}, +, \cdot)$. 
 \end{theo}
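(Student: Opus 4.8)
The plan is to work through the compound statement in the order listed, exploiting throughout that $\sigma$ is by construction an additive isomorphism from $(F,+_\sigma)$ onto $(F,+)$, so $(F,+_\sigma)$ is automatically an abelian group and, since $(F^\ast,\cdot)$ is already a group, the only left near-field axiom that can fail is left distributivity $\alpha(\beta +_\sigma \gamma)=\alpha\beta +_\sigma \alpha\gamma$. One direction of the equivalence is just the definition of a left near-field; for the converse I would observe that distributivity pins down the zero: taking $\alpha=0$ gives $0=0+_\sigma 0=\sigma^{-1}(\sigma(0)+\sigma(0))$, hence $\sigma(0)=0$ and the $+_\sigma$-identity $\sigma^{-1}(0)$ coincides with the multiplicative $0$. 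Together with the transported group structure this is exactly the content of the first assertion.

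That $\sigma$ is a $\mathbb{Z}$-endobijection then follows quickly: $\sigma(0)=0$ has just been obtained, and the $+_\sigma$-inverse of $\alpha$ is $\sigma^{-1}(-\sigma(\alpha))$ because $\sigma^{-1}(\sigma(\alpha)-\sigma(\alpha))=\sigma^{-1}(0)=0$. Applying Lemma \ref{additive1} to the near-field $(F,+_\sigma,\cdot)$ identifies this inverse with $-\alpha$ (in the exceptional case where it equals $\alpha$ one has $1+_\sigma 1=0$, and then $-\alpha=\alpha$ anyway), so $\sigma(-\alpha)=-\sigma(\alpha)$.

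The engine of the whole argument is a repeated-addition identity reconciling the two additions through $\sigma$. For $a=\chi_\rho(n)\in\widetilde{C_p}$ and $\beta\in F$, Lemma \ref{rho0} rewrites $\beta a$ as the $n$-fold $+$-sum of $\beta$; since $\sigma^{-1}(u+v)=\sigma^{-1}(u)+_\sigma\sigma^{-1}(v)$, applying $\sigma^{-1}$ turns this into the $n$-fold $+_\sigma$-sum of $\sigma^{-1}(\beta)$, and Lemma \ref{rho0} applied to $(F,+_\sigma,\cdot)$ collapses it to $\sigma^{-1}(\beta)\cdot\chi_\sigma(n)$, where $\chi_\sigma(n)$, the $n$-fold $+_\sigma$-sum of $1$, is itself $\sigma^{-1}(\sigma(1)\chi_\rho(n))=\sigma^{-1}(\sigma(1)a)$ by the same transport. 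This gives $\sigma^{-1}(\beta a)=\sigma^{-1}(\beta)\,\sigma^{-1}(\sigma(1)a)$, i.e. the third assertion for $a=\chi_\rho(n)$; I would extend it to negative $n$ via the $\mathbb{Z}$-map property and, in characteristic $0$, to the quotients $\chi_\rho(n)\chi_\rho(m)^{-1}$ that generate $\widetilde{C_p}$ by inverting. Pushing $\alpha\cdot(-)$ through the same chain and using left distributivity of $\cdot$ over $+_\sigma$ yields the identity $\sigma(\alpha((\beta a)_\sigma))=\sigma(\alpha\beta_\sigma)a$ of the second assertion, and substituting $\beta\mapsto\sigma(1)\beta$ into the third assertion immediately produces the multiplicativity of $\widetilde{\sigma^{-1}}$ on $\widetilde{C_p}$, which is the fourth assertion.

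For the quasi-multiplicative structure I would set $\widetilde{\sigma}(\alpha)=\sigma(\alpha)\sigma(1)^{-1}$, so that $\sigma(\alpha)=\widetilde{\sigma}(\alpha)\sigma(1)$ is manifestly of right quasi-multiplicative shape with $\lambda=\sigma(1)$; that $\widetilde{\sigma}$ is the multiplicative inverse of $\widetilde{\sigma^{-1}}$ reduces to the cancellation $\widetilde{\sigma^{-1}}(\widetilde{\sigma}(\alpha))=\sigma^{-1}(\sigma(1)\sigma(\alpha)\sigma(1)^{-1})=\alpha$, using the commutativity of $\widetilde{C_p}^{\ast}$ from Corollary \ref{uniqueup}. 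I expect this last cancellation to be the main obstacle, since it requires $\sigma(1)$ to commute with $\widetilde{C_p}$, which is transparent only when $\widetilde{C_p}$ is central; outside that situation the natural object landing in $\widetilde{C_p}$ is $\widetilde{\sigma}$ rather than $\sigma$ itself. For the rational case, however, the difficulty disappears: there $\widetilde{C_p}=F=\mathbb{Q}$ is commutative, so $\widetilde{\sigma}$ is a genuine multiplicative endobijection of $\mathbb{Q}$ and $\sigma$ is a right quasi-multiplicative endobijection associated with $\widetilde{\sigma}$ and $\sigma(1)$. Because $\mathbb{Q}$ is right distributive, the distributive case recorded after the definition of $+_\sigma$ gives $+_\sigma=+_{\widetilde{\sigma}}$, and Lemma \ref{ringisom} then exhibits $\widetilde{\sigma}$ as a field isomorphism $(\mathbb{Q},+_\sigma,\cdot)\to(\mathbb{Q},+,\cdot)$, which is the concluding claim.
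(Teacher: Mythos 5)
You take essentially the paper's route, and your argument is sound exactly where the paper's is. The reduction of the near-field axioms for $(F,+_\sigma,\cdot)$ to left distributivity, the use of Lemma \ref{additive1} for the $\mathbb{Z}$-map property, the repeated-addition engine via Lemma \ref{rho0} applied to both $+$ and $+_\sigma$ (then pushed to negative integers and, in characteristic $0$, to quotients), and the passage to the multiplicativity of $\widetilde{\sigma^{-1}}$ are all the same steps the paper performs; your reorderings are harmless (the paper proves $\sigma(\alpha((\beta a)_\sigma))=\sigma(\alpha\beta_\sigma)a$ first and specializes $\alpha=b_\sigma^{-1}$, $\beta=b$ to get the third assertion, while you go the other way), and your rational-case argument ($+_\sigma=+_{\widetilde{\sigma}}$ by right distributivity, then Lemma \ref{ringisom}) simply inlines the proof of Corollary \ref{dvr}, which is what the paper cites. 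Two of your choices are actually clarifications of the paper: you pin the $+_\sigma$-zero to the scalar $0$ (the paper leaves this implicit), and you keep the two characteristic maps apart via the identity $\chi_\sigma(n)=\sigma^{-1}(\sigma(1)\chi_\rho(n))$ --- the paper writes $\rho:=\rho_{+_\sigma}$ but then manifestly uses $\rho$ as the addition map of the original $+$, so your reading is the correct one. Finally, the obstacle you flag in the quasi-multiplicativity clause is genuine, but it is not a defect of your route relative to the paper's: making $\sigma|_{\widetilde{C_p}}$ a quasi-multiplicative \emph{endobijection of} $\widetilde{C_p}$ requires that $\sigma^{-1}$ restrict to an endobijection of $\widetilde{C_p}$ and that $\sigma(1)$ commute past $\widetilde{C_p}$, and the paper's proof obtains neither --- it invokes Lemmas \ref{qmc} and \ref{eqm} on $\widetilde{C_p}$ without checking them, even though $\widetilde{\sigma^{-1}}$ carries $\widetilde{C_p}$ onto the prime field of $(F,+_\sigma,\cdot)$, which need not coincide with $\widetilde{C_p}$. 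In the only case the final conclusion uses, $F=\mathbb{Q}$, one has $\widetilde{C_p}=F$ commutative and both treatments are sound; the only shared loose end elsewhere is the characteristic-two corner of Lemma \ref{additive1}, which both you and the paper wave past.
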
 
 \begin{proof}
Since $(F, \cdot)$ is a scalar group and $(F, +_\rho)$ is an abelian group, $(F, +_\sigma, \cdot)$ is a left near-field if and only if $\alpha (\beta +_\sigma \gamma) =   \alpha \beta +_\sigma \alpha \gamma$, for any $\alpha, \beta, \gamma \in F$. 
By Lemma \ref{additive1}, we know that $\sigma (\pm 1)=\pm \sigma(1)$,  $\sigma(0)= 0$ and $\sigma (-\alpha)=- \sigma(\alpha)$, for all $\alpha \in F$. That is $\sigma$ is a endobijection of $F$. We set $\rho:=\rho_{+_\sigma}$. Let $n \in \mathbb{N}$. From the associativity of $+_\sigma$, we obtain for any $\alpha, \beta\in F$, 
$$ \begin{array}{lrll} & \alpha ( \sigma^{-1} (\sum_{i=1}^n \sigma (\beta_\sigma )) ) &=&  \sigma^{-1} (\sum_{i=1}^n \sigma ( \alpha \beta_\sigma) ) \\
\Leftrightarrow  & \alpha ( \sigma^{-1} (\beta \rho^n (0)) )  &=&  \sigma^{-1} (  \sigma ( \alpha \beta_\sigma) \rho^n (0) )  \\
\Leftrightarrow  & \sigma ( \alpha ( (\beta \rho^n (0))_\sigma ))  &=&  \sigma ( \alpha \beta_\sigma) \rho^n (0)\\
\Leftrightarrow  & \sigma ( \alpha ( (\beta\chi_\rho ( n ))_\sigma ))  &=&  \sigma ( \alpha \beta_\sigma) \chi_\rho ( n ).
\end{array} $$
When $n=0$, then $\sigma ( \alpha ( \sigma^{-1} (n) ))  = \sigma ( \alpha \beta_\sigma) n  =0$. 
When $n \in \mathbb{Z}$, $n<0$ then we also have $\sigma ( \alpha ( (\beta\chi_\rho ( n ))_\sigma ))  =  \sigma ( \alpha \beta_\sigma) \chi_\rho ( n ), $ since $\chi_\rho ( n )= - \chi_\rho ( |n| )$ and $\sigma$ is a $\mathbb{Z}$-endomap.
%$$ \begin{array}{lrll} & \alpha ( \sigma^{-1} (-\sum_{i=1}^{|n|} \sigma (\beta_\sigma )) ) &=&  \sigma^{-1} (-\sum_{i=1}^{|n|} \sigma ( \alpha (\beta_\sigma)) ) \\
%\Leftrightarrow  & \alpha ( \sigma^{-1} ( \beta (- \rho^{|n|} (0))) )  &=&  \sigma^{-1} (  \sigma ( \alpha \beta_\sigma) (- \rho^{|n|} (0)))  \\
%\Leftrightarrow  & \sigma ( \alpha ( (  \beta (-\rho^{|n|} (0) ) )_\sigma ))  &=&  \sigma ( \alpha \beta_\sigma) (-\rho^{|n|} (0)).
%\end{array} $$
Therefore, for all $n \in \mathbb{Z}$, 
$$\sigma ( \alpha ( (  \beta \chi_\rho ( n )  )_\sigma ))  =  \sigma ( \alpha \beta_\sigma) \chi_\rho ( n ) .$$
When $Char(F)=0$. Let $n, \ m  \in \mathbb{Z}$ such that $\chi_\rho ( n ) \in F^*$. We have 
$$ \begin{array}{lrrl} 
& \sigma ( \alpha ( (\beta \chi_\rho ( n )  )_\sigma)  &=& \sigma ( \alpha \beta_\sigma) \chi_\rho ( n )    \\
& \sigma ( \alpha ( (\beta \chi_\rho ( n )  (\chi_\rho ( m ))^{-1} \chi_\rho ( m ) )_\sigma)  &=&  \sigma ( \alpha \beta_\sigma) \chi_\rho ( n )  (\chi_\rho ( m ))^{-1}\chi_\rho ( m ) \\
 \Leftrightarrow  & \sigma ( \alpha ( (\beta \chi_\rho ( n )  (\chi_\rho ( m ))^{-1}  )_\sigma)\chi_\rho ( m ) &=&  \sigma ( \alpha \beta_\sigma) \chi_\rho ( n )  (\chi_\rho ( m ))^{-1}\chi_\rho ( m )\\
  \Leftrightarrow  & \sigma ( \alpha ( (\beta \chi_\rho ( n )  (\chi_\rho ( m ))^{-1}  )_\sigma)&=&  \sigma ( \alpha \beta_\sigma) \chi_\rho ( n )  (\chi_\rho ( m ))^{-1}\\
    \Leftrightarrow & \sigma ( \alpha ( (\beta \widetilde {\chi_\rho} \left( \frac{n}{m} \right)  )_\sigma)&=&  \sigma ( \alpha \beta_\sigma)  \widetilde {\chi_\rho} \left( \frac{n}{m} \right).
\end{array} $$
We take $\alpha = b^{-1}_\sigma$, and $\beta = b$, and $a \in \widetilde{C_p}$, we get 
$$\begin{array}{lrrl} & \sigma (  b^{-1}_\sigma (b a )_\sigma )  &=&   \sigma (b_\sigma^{-1} b_\sigma)  a\\
   \Leftrightarrow  &  b^{-1}_\sigma ( ( b a )_\sigma ) &=& \sigma^{-1} (\sigma ( 1) a )\\
      \Leftrightarrow & \sigma^{-1} ( b a ) &=& \sigma^{-1} ( b) \sigma^{-1} (\sigma(1) a).
\end{array} $$

We define the map $\widetilde{\sigma^{-1}}$ as the map sending $\alpha$ to $\widetilde{\sigma^{-1}} ( \alpha )= \sigma^{-1}(\sigma (1) \alpha )$ for any $\alpha \in F$.
We have for any $a \in \widetilde{C_p}$ and $b \in F$. 
$$\widetilde{\sigma^{-1}} (  b a )=\sigma^{-1} (  \sigma (1) b  a ) = \sigma^{-1} (\sigma ( 1)  b) \sigma^{-1} ( \sigma ( 1) a)= \widetilde{\sigma^{-1}} (  b ) \widetilde{\sigma^{-1}} (  a )$$

Also, by Lemma \ref{qmc}, we know that $\sigma^{-1}$ is a quasi-multiplicative endobijection of $\widetilde{C_p}$ associated with the multiplicative morphism $\widetilde{\sigma^{-1}}$ defined by $\widetilde{\sigma^{-1}} ( \alpha )= \sigma^{-1}( \alpha \sigma (1))$ for any $\alpha \in F$ and $\sigma^{-1}( 1)$. Therefore, by Lemma \ref{eqm}, $\sigma$ is a quasi-multiplicative endobijection of $\widetilde{C_p}$ associated with the multiplicative morphism $\widetilde{\sigma}$ defined by $\widetilde{\sigma}( \alpha )= \sigma ( \alpha ) \sigma (1)^{-1}$ for any $\alpha \in \widetilde{C_p}$ and $\sigma (1)$.

When $(F, + , \cdot)$ is $(\mathbb{Q}, +, \cdot)$ the canonical rational field, $(\mathbb{Q}, +_\sigma, \cdot)$ is isomorphic to $(\mathbb{Q}, +,\cdot)$, by Corollary \ref{dvr}. 
\end{proof} 

Due to the work done in this section we can now describe all the possible additive structures over a finite field. 

Given $(F, \boxplus, \cdot)$ a finite field.  Then there is a field isomorphism between $(F, \boxplus, \cdot)$ and $(\mathbb{F}_{p^n} , + , \cdot)$ where $+$ is the usual addition in $\mathbb{F}_{p^n}$.  In particular, $\mathbb{F}_{p^n}$ has a unique additive structure up to field isomorphism over the scalar group $(\mathbb{F}_{p^n}, \cdot)$ where $\cdot$ is the usual multiplication. The additions on $\mathbb{F}_{p^n}$ are given by $ ( \alpha^{a} + \beta^{a})^{a^{-1_n}}$ where $a \in U_{p^n-1}$ and $a^{-1_n}$ is a representative for the multiplicative inverse of $[a]_{p^n-1}$ in the multiplicative group $U_{p^n-1}$ of $\mathbb{Z}/(p^n-1)\mathbb{Z}$. Indeed, the lemma is a result of the uniqueness of finite fields up to isomorphism and Lemma \ref{ringisom}. The structure of the multiplicative automorphism of $\mathbb{F}_{p^n}$ is given by the fact the $\mathbb{F}_{p^n}^*$ is a cyclic group of order $p^n -1$. 

We have seen that depending on the field structure put on $\mathbb{Q}$, $\mathbb{Q}$ might not be a prime field (see Example \ref{exq}).  More precisely, given a field structure on $\mathbb{Q}$, $(\mathbb{Q}, \boxplus, \cdot)$, since we have proven that $\mathbb{Q}$ with respect to $\boxplus$ is a vector space over $\mathbb{Q}$. We have that $(\mathbb{Q}, \boxplus, \cdot)$ is isomorphic to a field extension of $\mathbb{Q}$ say $(K , + , \cdot)$. In order for such isomorphism to exist we need a multiplicative bijection between $K$ and $\mathbb{Q}$, by Lemma \ref{ringisom}. The sufficient conditions for the isomorphism to exist are that: the group of units of the ring of integers of $K$ is $\{ \pm 1\}$ the group of units of $\mathbb{Z}$, the ring of integers of $K$, $\mathcal{O}_K$, is a unique factorization domain and the cardinality of the prime numbers of $K$ up to unit is the same as the cardinality of $\mathbb{N}$. Indeed, $\mathbb{Z}$ is generated freely multiplicatively by the set of prime numbers $\mathbb{P}$, and this set has the same cardinality as $\mathbb{N}$. For any ring of integers with those properties, there is an addition, $\boxplus=+_\phi$ for some $\phi: (K, \cdot)  \rightarrow (F, \cdot)$ multiplicative bijection, on $\mathbb{Q}$ such that $(\mathbb{Q}, \boxplus, \cdot)$ is isomorphic to a number field $(K, +, \cdot)$. $\phi$ is obtained by extending multiplicatively a bijection from $\mathbb{P}$ into a complete set of prime elements of $ \mathcal{O}_K$ distinct up to units. The prime subfield $(\mathbb{Q}, +, \cdot) $ of $(\mathbb{Q}, \boxplus, \cdot) $ is multiplicatively generated by the image of inert primes in $K/\mathbb{Q}$ and the inverse image via $\phi$ of the norm $K$-prime integers non-inert.   
%Moreover, we obtain as a direct consequence of Lemma \ref{ringisom} and \cite[Theorem 3.7]{MarquesBoonz2022A} the following. Given $(F, \boxplus, \cdot)$ a prime field of characteristic $p$.  
%The prime field structures on $(\mathbb{Q}, \cdot)$  are given by $\alpha +_\varphi \beta $ 
%where $\varphi$ is a multiplicative endbijection of $\mathbb{Q}$ (see Lemma \ref{multZ}). 
Moreover, we have $\varphi = (-)^a$ where $a \in\{ \pm 1\}$ are the only continuous multiplicative endobijection of $\mathbb{Q}$ for the topology on $\mathbb{Q}$ induced by the classical topology on $\mathbb{R}$. 

Let $\rho$ be a near-field addition map on $\mathbb{R}$. If we suppose that $\widetilde{\chi_\rho}$ can be extended by continuity to the completion of $\mathbb{Q}$, $\mathbb{R}$ and the resulting map $\widehat{\chi_\rho} : \mathbb{R} \rightarrow \mathbb{R}$ is a embedding of $\mathbb{R}$. If we suppose that $\widehat{\chi_\rho}$ is an isomorphism, then $\alpha \boxplus_\rho \beta =\alpha +_\varphi \beta $ 
where $\varphi$ is a multiplicative endobijection of $\mathbb{R}$ (see Definition \ref{R} and \cite[Theorem 3.7]{MarquesBoonz2022A}).

Let $\rho$ be a near-field addition map on $\mathbb{C}$. If we suppose that $\widetilde{\chi_\rho}$ can be extended by continuity to the completion of $\mathbb{Q}$, $\mathbb{R}$ to a map $\widehat{\chi_\rho} : \mathbb{R} \rightarrow \mathbb{C}$. We note that since $\widehat{\chi_\rho}$ is multiplicative and $(\mathbb{R}, \cdot)$ has no element of order $4$, then $\widehat{\chi_\rho}$ cannot be an isomorphism. Suppose that the evaluation map $\overline{\chi_\rho} : \mathbb{R}[x] \rightarrow \mathbb{C}$ sending $\sum_{k=0}^n a_k x^k$ to $\sum_{k=0}^n \widehat{\chi_\rho} (a_k) i^k$ where $a_k \in \mathbb{R}$, for all $k \in \{ 0,\cdots , n\}$ is surjective and $ker(\overline{\chi_\rho})$ is generated by an irreducible degree $2$ polynomial over $\mathbb{R}$, then $\alpha \boxplus_\rho \beta =\alpha +_\varphi \beta $  
where $\varphi$ is a multiplicative endbijection of $\mathbb{C}$ (see Definition \ref{R} and \cite[Theorem 3.12]{MarquesBoonz2022A}). Due to to Lemma \ref{ringisom}, to prove the above it suffices to prove that given an irreducible monic polynomial $x^2 +bx +c$ over $\mathbb{R}$, we have a field isomorphism $(\mathbb{R}[x] /(x^2 +bx +c), + , \cdot)  \simeq (\mathbb{C}, + , \cdot)$ where $+$ and $\cdot$ are the usual operations on $\mathbb{R}[x] /(x^2 +bx +c)$ and $\mathbb{C}$.  As usual completing the square, we obtain $\left( x+\frac{b}{2}\right)^2 - \frac{b^2}{4}+c =0$. That is $\left( x+\frac{b}{2}\right)^2 = \frac{b^2-4c}{4}$. Since the polynomial is irreducible over $\mathbb{R}$, we have $-\frac{b^2-4c}{4}>0 $ and $\left( x+\frac{b}{2}\right)^2 = (\sqrt{ -\frac{b^2-4c}{4}} i)^2  $. Equivalently, setting $X=\frac{1}{\sqrt{ -\frac{b^2-4c}{4}}} \left( x+\frac{b}{2}\right)$, we get $X^2 +1=0$. We obtain a field isomorphism from $(\mathbb{R}[x] /(x^2 +bx +c), + , \cdot)$ to $(\mathbb{R}[X] /(X^2+1), + , \cdot)$ sending $x$ to $\sqrt{ -\frac{b^2-4c}{4}} x-\frac{b}{2}$ and by definition, $(\mathbb{R}[X] /(X^2+1), + , \cdot) \simeq (\mathbb{C}, + , \cdot)$.

\end{document}